\definecolor{lightblue}{rgb}{0.22,0.45,0.70}
\definecolor{lightgreen}{rgb}{0.22,0.55,0.20}
\newtheorem{theorem}{Theorem}[section]
\newtheorem{lemma}[theorem]{Lemma}
\newtheorem{defi}[theorem]{Definition}
\theoremstyle{remark}
\newcommand{\mb}[1]{\mathbf{#1}} 
\newcommand{\Real}{\mathbb{R}}  
\newcommand{\Div}{\mathrm{Div}\,}  
\newcommand{\diver}{\nabla \cdot}
\newcommand{\bs}[1]{\boldsymbol{#1}}
\newcommand{\dx}{\ensuremath{\, \mathrm{d}x}}
\newcommand{\ds}{\ensuremath{\, \mathrm{d}s}}
\newcommand{\dz}{\ensuremath{\, \mathrm{d}z}}
\newcommand{\dt}{\ensuremath{\, \mathrm{d}t}}
\newcommand{\esssup}{\operatorname*{ess \, sup}}
\newcommand{\Dt}{\Delta t}
\newcommand{\Eh}{\mathcal{E}_h}
\newcommand{\Th}{\mathcal{T}_h}
\newcommand{\upw}{\mathrm{upw}}
\newcommand{\cblue}[1]{\textcolor{blue}{#1}}
\newcommand{\Thnorm}{1,\Th}
\DeclarePairedDelimiter\norm{\lVert}{\rVert}
\DeclarePairedDelimiter\snorm{\lvert}{\rvert}
\DeclarePairedDelimiter\abs{\lvert}{\rvert}
\newcommand{\Norm}{\norm{\cdot}}
\newcommand{\dmean}[1]{{\left\{\kern-0.6ex\left\{ #1 
		\right\}\kern-0.6ex\right\}}}
\DeclarePairedDelimiter\djump{\llbracket}{\rrbracket}
\begin{document}

\nocite{*}

\title{A Discontinuous Galerkin and Semismooth Newton Approach for the Numerical Solution of  Bingham Flow with Variable Density}

\author{Sergio~Gonz\'alez-Andrade \\ \footnotesize Research Center on Mathematical Modeling (MODEMAT) and \\\footnotesize Departamento de Matem\'atica -  Escuela Polit\'ecnica Nacional\\\footnotesize Ladr\'on de
Guevara E11-253, Quito 170413, Ecuador\\\footnotesize\tt sergio.gonzalez@epn.edu.ec\\ \\
Paul E. M\'endez Silva \\ \footnotesize Research Center on Mathematical Modeling (MODEMAT) -  Escuela Polit\'ecnica Nacional\\\footnotesize Ladr\'on de
Guevara E11-253, Quito 170413, Ecuador\\\footnotesize\tt paul.mendez01@epn.edu.ec} 
\date{\today}
\maketitle

\begin{abstract}
This paper is devoted to the study of Bingham flow with variable density. We propose a local bi-viscosity regularization of the stress tensor based on a Huber smoothing step. Next, our computational approach is based on a second-order, divergence-conforming discretization of the Huber regularized Bingham constitutive equations, coupled with a discontinuous Galerkin scheme for the mass density.  We take advantage of the properties of the divergence conforming and discontinuous Galerkin formulations to incorporate upwind discretizations to stabilize the formulation. The stability of the continuous problem and the full-discrete scheme are analyzed. Further,  a semismooth Newton method is proposed for solving the obtained fully-discretized system of equations at each time step. Finally,  several numerical examples that illustrate the main features of the problem and the properties of the numerical scheme are presented.
  \vskip .2in

\noindent {\bf Keywords: } Bingham fluids, discontinuous-Galerkin method, semismooth Newton methods.
\vspace{0.2cm}\\
\noindent {\bf AMS Subject Classification:} 76A05.  76-10. 65M60. 49M15.\\
\end{abstract}

\section{Introduction}
\subsection{Scope}
In this paper, we are interested in the analysis and numerical approximation of unsteady incompressible Bingham flow with variable density.  Let $\Omega\subset\mathbb{R}^d$ , $d = 2,3$,  be a bounded connected domain with Lipschitz-continuous boundary $\partial \Omega$ and let $T$ be a real positive number. Then, this kind of flows are governed by the following Navier-Stokes type system
\begin{equation} \label{eq:main_prob}
\begin{array}{ccc}
\partial_t \rho + \mb{u}\cdot \nabla\rho=0, & \text{in }\Omega \times ]0,T[,\vspace{0.2cm}\\
 \rho\partial_t (\mb{u}) + ( \rho\mb{u}\cdot\nabla)\mb{u}- \Div\bs{\tau}+\nabla p = \mb{f}, & \text{in }\Omega \times ]0,T[,\vspace{0.2cm}\\
\diver\mb{u}=0, & \text{in }\Omega \times ]0,T[,
\end{array}
\end{equation}
where,  the sought quantities are the density $\rho$, the velocity of the fluid $\mathbf{u}$ and the pressure $p$.  This system has been proposed as the classical model for non-homogeneous flow or flow with variable density of incompressible fluids (see \cite{Guermond2000,Guermond2009,Ladyz78,Liu2007}). In this paper,  we are concerned with Bingham flow. Therefore,  the fluid stress tensor $\bs{\tau}$ is given by
\begin{align*}
	 \begin{cases}
		\bs{\tau} = 2 \eta \mathbf{D}\mb{u} + \tau_s \frac{\mathbf{D}\mb{u}}{\abs{\mathbf{D}\mb{u}}} & \text{ if }\mathbf{D} \mb{u} \neq 0, \\
		\abs{\bs{\tau}} \leq \tau_s & \text{ if }\mathbf{D} \mb{u} = 0.
	\end{cases}
\end{align*}
Here, $\mathbf{D}\mathbf{u}$ stands for the symmetric part of the gradient, $\eta$ is the viscosity  and $\tau_s$ represents the yield stress.  Finally,  the system is endowed  with appropriate initial data
\begin{align*}
	\rho(0) = \rho_0, \mb{u}(0) = \mb{u}_0 \quad \text{in }\Omega \times \{0\},
\end{align*}
and boundary conditions in the following manner
\begin{align*}
	\rho(\mb{x},t) = \psi(\mb{x},t), \quad (\mb{x},t) \text{ in }\partial \Omega_{\mathrm{in}} \times ]0,T[,\\
	\mb{u}(\mb{x},t) = \mb{g}(\mb{x},t), \quad (\mb{x},t) \text{ in }\partial \Omega \times ]0,T[,
\end{align*}
where
\[\partial \Omega_{\mathrm{in}} = \{x \in \partial \Omega \,|\, \mb{g}(x)\cdot \mb{n}_{\partial \Omega} < 0\}\]
with $\mb{n}_{\partial \Omega}$ representing the outer unit normal vector at $\mathbf{x}\in \partial\Omega.$ Furthermore, we assume that $\int_{\partial \Omega}\mb{n}_{\partial \Omega} \cdot \mb{g} = 0$, $\diver \mb{u}_0=0$, and that the compatibility condition $\mb{n}_{\partial \Omega}\cdot \mb{g}(\mb{x},0)=\mb{n}_{\partial \Omega}\cdot\mb{u}_0|_{\partial \Omega}$ holds. 

Bingham is the seminal model for viscoplastic fluids, which are materials whose rheology is defined by the existence of a yield stress, $\tau_s$. This characteristic implies that the material hardens in regions where the stress does not exceed the yield stress. Meanwhile, in the regions where the stress overpasses $\tau_s$, the material flows as a viscous fluid with plastic behaviour.  Because of this mechanical property, one particularity of Bingham fluids is the presence of rigid moving parts in the interior of the flow. The size and location of these rigid zones depend on the yield stress, and can even block the flow for high values of $\tau_s$.  This so-called \textit{blocking property} makes the study of these materials of interest in various fields and applications. For instance, when related to the flow of biological fluids, such as blood or mucus, a blocking could be an indicator of health-compromising phenomena (see \cite{Chat}).  Another of the main fields of applications is geophysical flows. In fact, the analysis of lava and volcanic material flows is of particular interest. Further, the most interesting and challenging applications in this area involve non-homogeneous and variable density flows, for instance, in the analysis of landslides \cite{Hild, Ionescu}.

As mentioned previously, our interest lies in studying incompressible fluids with viscoplastic Bingham behavior. To satisfy mass conservation in such fluids, two conditions must be met: the mass density of each fluid particle must remain constant during motion, and the velocity field must satisfy the incompressibility constraint. However, our focus is also on flows with variable density or non-homogeneity. We consider this non-homogeneity condition in the sense proposed by, e.g. \cite{Simon90,Guermond2009}, where a non-homogeneous fluid is understood as two (possible more) incompressible fluids with different densities which mix. For a variable density flow model, we need to consider a coupled system between a Navier-Stokes equation and a first-order transport equation for density, as shown in \eqref{eq:main_prob}. This makes the problem challenging from the PDEs theory perspective (\cite{Ladyz78,Lions1996}).  For mathematical theory on the well-posedness of variable density or non-homogeneous Bingham flow, we refer to \cite{Basov,Bohm,Danchin2004}. In particular, \cite{Bohm}, analyzes a variational formulation for non-homogeneous Bingham flow using the classical variational inequality approach and proves the existence of weak solutions for \eqref{eq:main_prob}. Further, the author finds regularity conditions to obtain uniqueness of solutions. In this work, we mainly focus on the numerical simulation of this flow problem, considering that the theoretical results hold.

The main challenge in simulating yield stress fluids, such as Bingham fluids, is to correctly represent the unyielded (rigid) and yielded (non-rigid) regions in the material. From the mathematical perspective, this implies developing strategies to deal with the intrinsic discontinuity in the stress tensor $\bs{\tau}$. Our approach in this work is based on a local regularization of the stress tensor in the Bingham constitutive equations.  The regularization approach has a well-known computational advantage: regularized systems can be solved by fast converging numerical algorithms, usually based on generalized Newton methods (See \cite{Saramito}). On the other hand, performing a smoothing step on the stress tensor modifies the expected modeled behavior. In our case, we seek a balance between efficient and fast computational solutions and a regularization process that keeps the physics of the flow as exact as possible. We have seen in previous contributions that this balance can be achieved with a Huber-type regularization process (\cite{JC2012,Gon1,GonMen}). The main idea of this smoothing process is that in order to model the yielded regions, we can consider the actual form of the stress tensor, while for the approximation of the unyielded regions, we consider a smooth version of the tensor. The intrinsic quality of this regularization lies in the fact that the regions in which the stress is modified can be very small and easy to represent computationally, which guarantees a reliable physical approximation of the flow. 

For developing numerical approximations to the regularized problem, it seems natural to look at the techniques established for the solution of homogeneous density incompressible Navier-Stokes equations and try to exploit them as much as possible.  It is the purpose
of this paper to advance a second-order divergence-conforming discretization for this problem. Specifically, we introduce an $\mathbf{H}(\mathrm{div})$-conforming method based
on Brezzi-Douglas-Marini (BDM) spaces \cite{Brezzi1985}, coupled with a discontinuous Galerkin discretization for density.  Both equations are stabilized with upwind terms as in \cite{Cockburn2005, DiPietro2012} and combined with an implicit, second-order backward differentiation formula (BDF2) for time discretization. 

Among the advantages of exactly divergence-free methods, we can mention the following: First, they are pressure-robust, which means that it is possible to separate velocity and pressure completely in the error analysis. Also, using an $\mathbf{H}(\mathrm{div})$-conforming FEM allows the usage of discontinuous Galerkin Finite element method (dG-FEM) techniques in the formulation analysis and treatment of the convective term. Moreover, the requirement for less stability implies that the amount of numerical dissipation added is minimized. Finally, the conservation properties of the exact equations of mass, energy, and momentum are naturally transferred to the discrete solution \cite{Schroeder2018}.

\subsection{Related Work}
While there is a rich body of literature on the numerical approximation of the constant density and viscosity Navier-Stokes equations, fewer results are available for the variable density case. The numerical approximation of similar coupled flow systems has been studied using many different numerical methods, including projection methods \cite{Pyo2007,Guermond2000}, fractional-step methods \cite{Freignaud2001,Guermond2009}, and the discontinuous Galerkin (dG) method \cite{Liu2007}. Furthermore, the numerical simulation of the variable density incompressible Navier-Stokes system was studied in \cite{Calgaro2008}, where the authors introduce a hybrid scheme that combines a Finite Volume approach for treating the mass conservation equation and a Finite Element method to deal with the momentum equation and the divergence-free constraint.

The $\mathbf{H}(\mathrm{div})$-conforming approach for the Brinkman equation was numerically studied by \cite{konno2011}, while exactly divergence-free $\mathbf{H}(\mathrm{div})$-conforming finite element methods for time-dependent incompressible viscous flow problems have been extensively studied in \cite{Schroeder2018}, with special emphasis on pressure and Reynolds semi-robustness of the formulations. 

In the case of variable density or density-dependent Bingham flow,  mixing and interaction of materials with different densities are a mainly interesting field for engineering and mathematical communities.  For instance, several contributions have discussed this model as a suitable background for landslides and, in general, for debris flows (see \cite{Hild,Hungr,Ionescu}). This assertion arises from the fact that debris flows involve several substances, including mixtures and suspensions of granular particles in water, sand, and organic matter, among others. Depending on the physical and mechanical conditions, these substances can create rigid zones that move within the flow, leading to the expected behavior of a viscoplastic Bingham material. Further, the flow is not expected to be homogeneous, as the density varies depending on the concentration of the component substances. 

One interesting and challenging benchmark problem is the so-called Rayleigh-Taylor instability that occurs when two fluids with different densities interact. In \cite{Demian,Dolude}, the authors analyze this phenomenon for two viscoplastic materials using a volume of fluid (VOF) method and a hydrodynamic simulation based on the Bingham model. Additionally, in \cite{Bertola}, the authors perform an experimental study of the behavior of viscoplastic drops moving in a given medium, usually with different densities. In contrast to most of these contributions, this paper focuses on the computational simulation of these phenomena, based on the variational analysis of the constitutive PDEs for non-homogeneous Bingham flow. 

\subsection{Outline of the paper}
The remainder of this paper is organized as follows. In Section \ref{sec:cont-form}, we introduce the continuous formulation of problem \ref{eq:main_prob} and recall its main properties. We also propose and briefly analyze the local Huber regularization for the problem.  In Section \ref{sec:Discr}, we describe the time semi-discretization, and then the complete discrete scheme of this problem, briefly addressing stability properties. We also discuss the semismooth Newton linearization of each time step. Finally,  in Section \ref{sec:Num-examples}, we illustrate the properties of the problem and the scheme with numerical examples generated by the method introduced. We close the paper with some remarks and discussions given in Section \ref{sec:conclusions}.

\section{The continuous formulation} \label{sec:cont-form}

In this section, we introduce and analyze a transient formulation of the coupled problem.  We start by introducing some notation. We denote by $L^p(\Omega)$ and  $W^{r,p}(\Omega)$ the usual Lebesgue and Sobolev spaces  with respective norms $\smash{\Norm_{L^p(\Omega)}}$ and $\smash{\Norm_{W^{r,p}(\Omega)}}$. If $p=2$ we write $H^r(\Omega)$ and $\Norm_{r,\Omega}$ in place of $\smash{W^{r,p}(\Omega)}$ and $\smash{\Norm_{W^{r,p}(\Omega)}}$. By~$\mb{L}$ and~$\mathbb{L}$ we denote the corresponding vectorial and tensorial counterparts of the scalar functional space~$L$, respectively. Further, we denote by $(\cdot,\cdot)_{\Omega}$ the usual inner product in $L^2(\Omega)$. Moreover, for any vector field $\mb{v} = (v_i)_{i = 1 , d}$ we set the gradient, symmetric part of the gradient and divergence, as
\begin{equation*}
	\nabla \mb{v} := \left(\frac{\partial v_i}{\partial x_j} \right)_{i,j=1,d}, \, \mathbf{D} \mb{v} :=\frac{1}{2} \left(\nabla \mb{v} + (\nabla \mb{v})^T\right)\,\mbox{  and  }\, \diver \mb{v} := \sum_{j=1}^d \frac{\partial v_j}{\partial x_j},
\end{equation*}
respectively. In what follows,  we usually use the vector-valued Hilbert spaces
	\begin{align*}
	\mb{H}(\mathrm{div};\Omega) &\coloneqq \bigl\{\mb{w} \in \mb{L}^2(\Omega): \diver \mb{w} \in L^2(\Omega) \bigr\}, \\
	\mb{H}_0(\mathrm{div};\Omega) &\coloneqq \bigl\{\mb{w} \in \mb{H}(\mathrm{div};\Omega): \mb{w}\cdot\mb{n}_{\partial\Omega} = 0 \text{ on }\partial\Omega \bigr\}, \\
	\mb{H}_0(\mathrm{div}\!^0;\Omega) &\coloneqq \bigl\{\mb{w} \in \mb{H}_0(\mathrm{div};\Omega) : \diver \mb{w} = 0 \text{ in }\Omega \bigr\},
	\end{align*}
For a given tensor $\bs{T}$, we let $\Div \bs{T}$ be the divergence operator acting along the rows of $\bs{T}$. We denote by $L^s(0,T;W^{m,p}(\Omega))$ the Banach space of all $L^s$-integrable functions from $[0,T]$ into $W^{m,p}(\Omega)$, with norm 
\begin{align*} 
	\norm{v}_{L^s(0,T;W^{m,p}(\Omega))} = \begin{cases} \displaystyle
		\left(\int_{0}^{T} \norm{v(t)}^s_{W^{m,p}(\Omega)} \dt \right)^{1/s} & \text{if $1\leq s < \infty$,} \\
		\esssup_{t \in [0,T]} \norm{v(t)}_{W^{m,p}(\Omega)} & \text{if $s = \infty$.} 
	\end{cases}
\end{align*}

\subsection{Huber regularization}
The main characteristic of viscoplastic materials is the existence of a yield stress. These fluids exhibit non-Newtonian behavior depending on this parameter: if the total stress is below the yield stress, the fluid moves without continuous deformation, which means that the material is moving as a rigid solid. This behavior is also expected in the so-called stagnation regions, where the material is at rest. On the other hand, if the stress surpasses the yield stress, the fluid flows as a Newtonian fluid in the particular case of the Bingham model.  

The complex behavior of Bingham fluids is modeled by the following stress structure:
\begin{equation}\label{stress}
\left\{\begin{array}{lll}
\bs{\tau}=2\eta\mathbf{D}\mathbf{u} + \tau_s\frac{\mathbf{D}\mathbf{u}}{\abs{\mathbf{D}\mathbf{u}}},& \mbox{if $\mathbf{D}\mathbf{u}\neq 0$}\vspace{0.2cm}\\ \abs{\bs{\tau}} \leq \tau_s,& \mbox{if $\mathbf{D}\mathbf{u}=0$}.
\end{array}\right.
\end{equation}
Note that in the so-called yielded regions, \textit{i.e.,} regions where $\mathbf{D}\mathbf{u}\neq 0$, the stress is given as a sum of two terms: a viscous term associated with the viscosity $\eta$, and a plastic term associated with the yield stress $\tau_s$. Furthermore, in the unyielded regions where $\mathbf{D}\mathbf{u}=0$, we only know that the stress is bounded. This is the main issue regarding the mathematical modeling and numerical solution of these materials: in general, we do not have a priori knowledge of the localization of the yielded or unyielded regions in the flow. Because of this fact, we are dealing with an ill-posed problem.

One classical approach for the analysis and numerical solution of these materials is to regularize the stress tensor. In this work, we propose a local regularization based on a Huber smoothing step, which, when applied to \eqref{stress}, reads as follows:
\begin{equation}\label{stress_reg}
\begin{split}
\bs{\tau}_\gamma:= \mu(\abs{\mb{Du}}_{\gamma})\mb{Du} \\
\mu(t) := 2\eta + \tau_s\gamma \frac{1}{t},
\end{split}
\end{equation}
where $|\bs{\mb{A}}|_\gamma:= \max\{\tau_s, \gamma|\mb{A}|\}$.  Here $\gamma\gg 0$ is a given regularization parameter, such that $\gamma\rightarrow\infty$. This is a local regularization approach, which has proven to be efficient and reliable for the numerical solution of several viscoplastic flow problems (see \cite{JC2012, Gon1, GonMen}). 

Several smoothing steps have been proposed for the Bingham model. The best-known regularization procedures are the Papanastasiou and the Bercovier-Engleman, which are built by using smooth (at least twice differentiable) functions (see \cite{FrigaardN}). In contrast, the Huber regularization (bi-viscosity) is based on a piecewise linear function that recovers the real structure of the stress in the yielded regions while making the smoothed region around the unyielded regions as small as possible. In Figure \ref{fig:reg}, left, we show a graphical comparison, in a 1D scheme, of the stress tensor $\bs{\tau}$ vs. the deformation tensor $\mb{D}\mb{u}$ for the regularization steps mentioned before. In this picture, it is possible to appreciate the qualitative advantage of Huber regularization. The regularized stress $\bs{\tau}_\gamma$ is the actual material stress in the yielded regions, while the regularization of the unyielded regions is performed in small neighbourhoods around the real rigid zones. This behaviour allows us to obtain precise and reliable approximations of the actual stress, even when we approach the unyielded regions \textit{i.e.} for small values of the deformation stress. This advantage can be also appreciated in Figure  \ref{fig:reg}, right, where it is shown that the Huber regularization approaches to the real model very aggressively with moderate values for the smoothing parameter.  In contrast with this behaviour, the other smooth regularization procedures depend on smooth functions, which implies that the approximation of the real Bingham behaviour is not precise, specially in regions close to the unyielded regions ($\mathbf{D}\mathbf{u}\rightarrow 0$).

 \begin{figure}
	\begin{center}
		\includegraphics[height=0.4\textwidth]{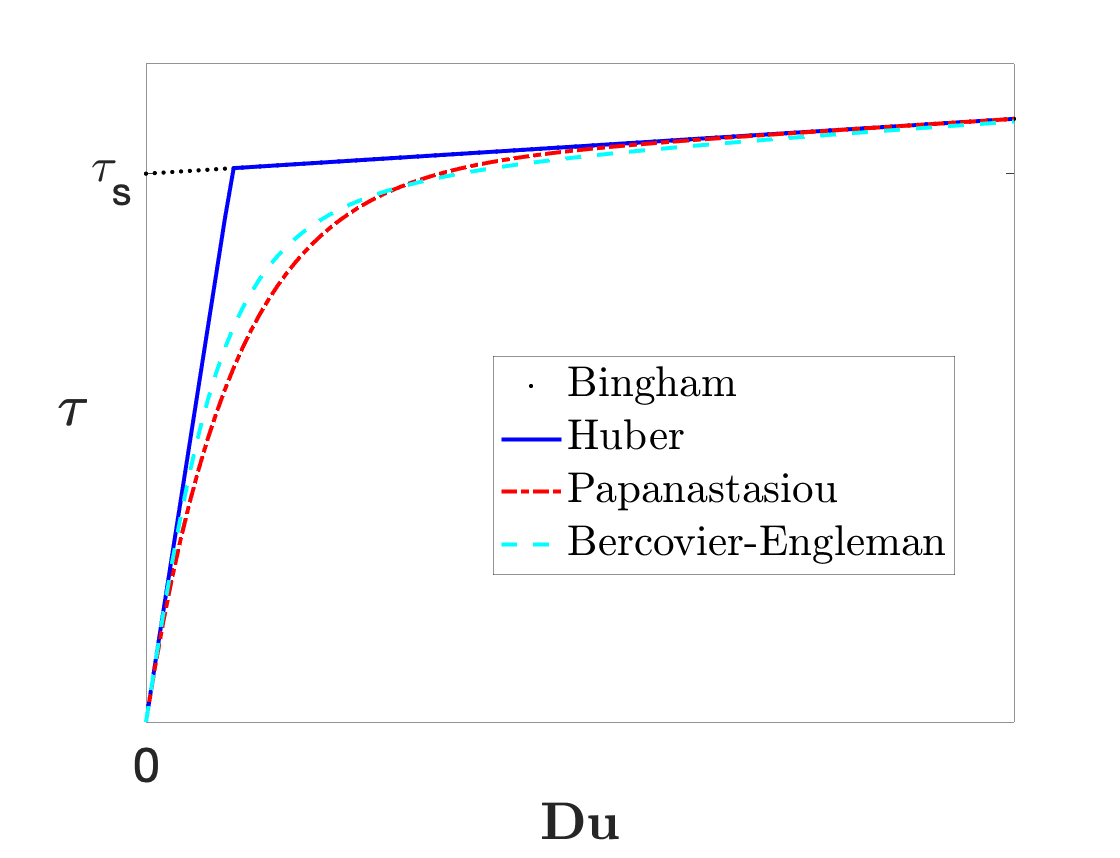}\includegraphics[height=0.4\textwidth]{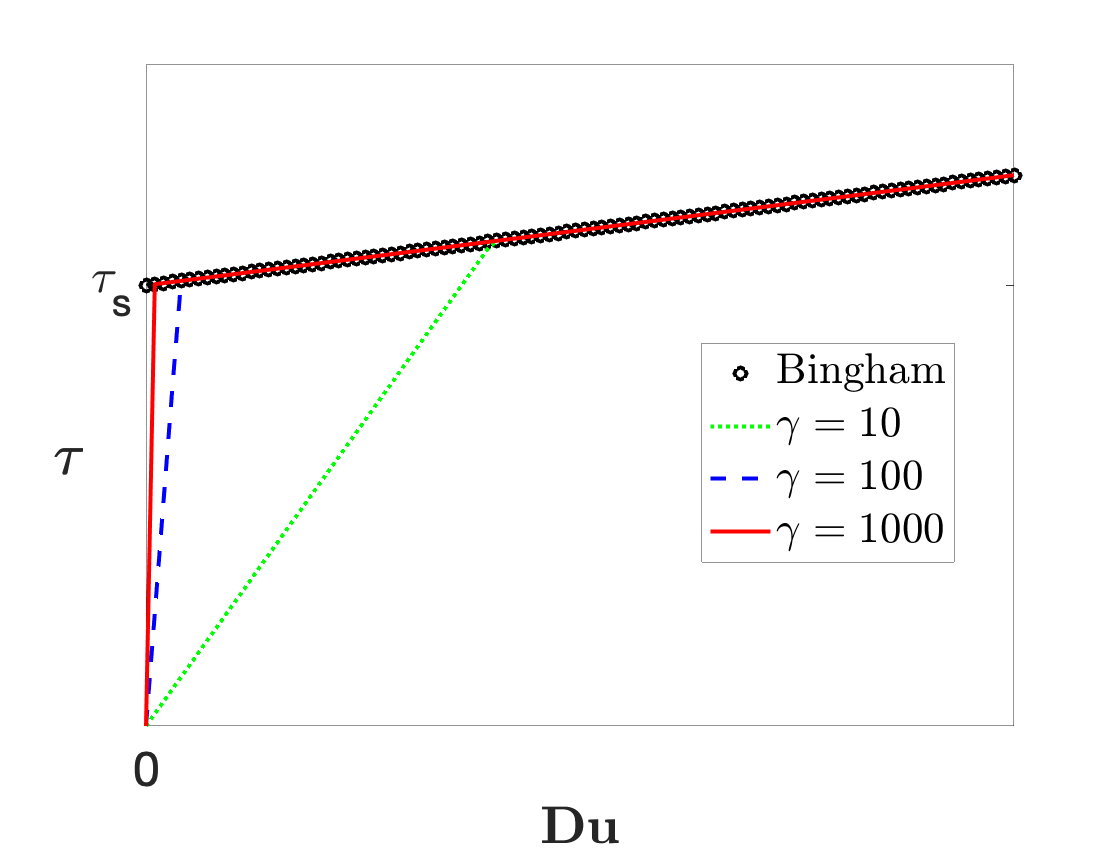}	
	\vspace{-3mm}
	\caption{Comparison between classical regularization schemes and the Huber (bi-viscosity) regularization (left). Huber regularization for several values of the smoothing parameter $\gamma$ (right).}\label{fig:reg}  
	\end{center}
\end{figure}

The Huber regularization is a local procedure designed to preserve qualitatively the structure of the model in the entire geometry.  Due to this fact, the smoothing approach allows us to directly define regions that approximate the yielded and unyielded regions in the flow in the following manner: the yielded regions are approximated by regions where $|\mb{Du}|\geq \frac{\tau_s}{\gamma}$, while the unyielded regions are approximated by regions where $|\mb{Du}|< \frac{\tau_s}{\gamma}$.  One of the main characteristics of the flow is the fact that the viscosity of the material is supposed to jump to infinity when crossing the separating phase from the yielded to the unyielded regions (let us recall that the model understands that the material moves like a rigid solid in the unyielded regions).  Considering that the parameter $\gamma\rightarrow\infty$,  the smoothing procedure sets a large viscosity in the unyielded regions and the actual viscosity of the material in the yielded regions (see \eqref{stress_reg}). Because of this fact, the approach is also known as bi-viscosity regularization (\cite{bever}). 

Summarizing, the system of Huber regularized constitutive equations for the non-homogeneous Bingham flow is given by
\begin{equation} \label{eq:reg_prob}
\begin{array}{ccc}
\partial_t \rho + \mb{u}\cdot \nabla\rho=0, & \text{in }\Omega \times ]0,T[,\vspace{0.2cm}\\
 \rho\partial_t ( \mb{u}) +  (\rho\mb{u}\cdot\nabla)\mb{u}- \Div\bs{\tau}_\gamma+\nabla p = \mb{f}, & \text{in }\Omega \times ]0,T[,\vspace{0.2cm}\\
\diver\mb{u}=0, & \text{in }\Omega \times ]0,T[,\vspace{0.2cm}\\
\bs{\tau}_\gamma:=\mu(\abs{\mb{Du}}_\gamma)\mb{Du},&\mbox{a.e. in $\Omega\times ]0,T[$}.
\end{array}
\end{equation}

\subsection{Weak formulation}

Let us define the following spaces
	\begin{align*}
		\mb{V} &\coloneqq \mb{H}^1_0(\Omega), \quad \mathcal{Q} \coloneqq L_0^2(\Omega), \quad \mathcal{W} \coloneqq H^1_0(\Omega),\\
		\mb{V}^t &\coloneqq \bigl\{\mb{w} \in \mb{L}^2(0,T;\mb{V}) \,:\, \partial_t \mb{w} \in L^2(0,T;\mb{L}^2(\Omega)) \bigr\}, \\
		\mathcal{Q}^t & \coloneqq L^2(0,T; \mathcal{Q}), \\
		\mathcal{W}^t & \coloneqq \bigl\{s \in L^2(0,T;\mathcal{W}) \,:\, \partial_t s \in L^2(0,T;L^2(\Omega)) \bigr\}.  
	\end{align*}

Testing each equation in problem \eqref{eq:reg_prob}  against suitable functions and integrating by parts whenever adequate, gives the following 
weak formulation: Find $(\rho, \mb{u}, p) \in \mathcal{W}^t \times \mb{V}^t \times \mathcal{Q}^t$ such that for all $(\zeta, \mb{v},q) \in \mathcal{W} \times \mb{V} \times \mathcal{Q}$ and for a.e. $t \in [0, T]$, it holds that
\begin{align}  \label{eq:var_prob} 
	\begin{split} 
		\int_{\Omega}\partial_t \rho \zeta \dx + c_1(\mb{u}, \rho, \zeta) &= 0, \\
\int_{\Omega} \sigma\partial_t  (\sigma\mb{u}) \cdot \mb{v} \dx + a_2(\mb{u},\mb{v}) + c_2(\rho\mb{u};\mb{u},\mb{v}) + b(\mb{v},p) &= \int_{\Omega} \mb{f}\cdot\mb{v}\dx,\\
		b(\mb{u},q) &= 0.
	\end{split}
\end{align}
where as in \cite{Guermond2000}, we use an equivalent equation with $\sigma = \sqrt{\rho}$.  The variational forms $a_2: \mb{V}\times\mb{V} \to \Real$, $c_2: \mathcal{W}\times \mb{V}\times\mb{V} \to \Real$, $c_1: \mb{V}\times\mathcal{W}\times\mathcal{W} \to \Real$ and $b: \mb{V}\times\mathcal{Q}\to \Real$ are defined as follows, for all $\mb{u}, \mb{v}, \mb{w} \in \mb{V}$, $q\in\mathcal{Q}$, $\rho, \zeta \in \mathcal{W}$:

\begin{align*}
	a_2(\mb{u}, \mb{v})  
	& \coloneqq \int_{\Omega} \bigl(\mu(\abs{\mathbf{D} \mb{u}}_{\gamma}) \mathbf{D}\mb{u}:\mathbf{D}\mb{v} \bigr)\dx  ,\\
	c_2(\rho\mb{w}; \mb{u}, \mb{v}) & \coloneqq \int_\Omega (\rho\mb{w} \cdot \nabla)\mb{u}\cdot\mb{v} \dx + \frac{1}{2}\int_{\Omega}\diver(\rho \mb{u}) \mb{u}\cdot\mb{v} \dx,\\
	c_1(\mb{u}; \rho, \zeta) & \coloneqq \int_\Omega (\mb{u} \cdot \nabla \rho)\zeta \dx,\\
	b(\mb{u},q) &= -\int_{\Omega} q \,\diver \mb{u} \dx.
\end{align*}

\cblue{Note that
	\begin{align*}
		\sigma \partial_t (\sigma\mb{u}) = \rho \partial_t(\mb{u}) + \frac{1}{2} \mb{u}\partial_t \rho =\rho \partial_t(\mb{u}) - \frac{1}{2} \nabla \cdot (\rho \mb{u}) \mb{u}
	\end{align*}
	Hence $\sigma \partial_t (\sigma\mb{u}) + ( \rho\mb{u}\cdot\nabla)\mb{u} +\frac{1}{2} \nabla \cdot (\rho \mb{u}) \mb{u}- \Div\bs{\tau}+\nabla p = \mb{f}$, is mathematically equivalent to the original system \eqref{eq:main_prob}. This alternative form of the momentum equation will preserve exactly the kinetic energy balance at the discrete level (see for instance \cite{Guermond2000}).}

\subsection{Stability of the continuous problem}

For the sake of simplicity, we will use homogeneous Dirichlet boundary conditions for velocity in our analysis. Note that more general boundary conditions can still be handled using similar techniques (see, e.g., \cite{Guermond2000}). It is also worth noticing that, for a homogeneous Dirichlet condition on the normal component of the velocity on the entire $\partial \Omega$, no boundary condition needs to be specified for the density.

Before presenting our stability results, we will make some preparatory observations.
\begin{lemma}\label{lem:E<gamma}
	Let $\boldsymbol{\theta},\,\boldsymbol{\vartheta}\in \mathbb{L}^p(\Omega)$. Then, the following estimate holds
	\begin{equation}\label{E<gamma}
		|\boldsymbol{\theta}(x)|_\gamma - |\boldsymbol{\vartheta}(x)|_\gamma\leq \gamma |\boldsymbol{\theta}(x) - \boldsymbol{\vartheta}(x)|,\,\mbox{ a.e. in $\Omega$}.
	\end{equation}
\end{lemma}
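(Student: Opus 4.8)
The plan is to reduce the claimed bound to a pointwise scalar inequality and to recognize $\abs{\cdot}_\gamma$ as the composition of the (Lipschitz) map $t\mapsto\max\{\tau_s,\gamma t\}$ with the Euclidean (Frobenius) tensor norm. Since $\bs{\theta}$ and $\bs{\vartheta}$ are fixed representatives in $\mathbb{L}^p(\Omega)$, it suffices to verify \eqref{E<gamma} for a.e.\ fixed $x\in\Omega$; thereafter $x$ plays no role and I would work with $\bs{\theta}(x)$ and $\bs{\vartheta}(x)$ as elements of the finite-dimensional tensor space equipped with the norm $\abs{\cdot}$.

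First I would isolate the elementary fact that for real numbers $a,b,c$ one has $\max\{a,b\}-\max\{a,c\}\le\abs{b-c}$. This follows from a two-line case distinction: if the left maximum is attained at $a$, then $\max\{a,b\}=a\le\max\{a,c\}$ and the difference is nonpositive; otherwise $\max\{a,b\}=b$ with $b\ge a$, while $\max\{a,c\}\ge c$, so the difference is bounded by $b-c\le\abs{b-c}$. Applying this with $a=\tau_s$, $b=\gamma\abs{\bs{\theta}(x)}$ and $c=\gamma\abs{\bs{\vartheta}(x)}$ yields $\abs{\bs{\theta}(x)}_\gamma-\abs{\bs{\vartheta}(x)}_\gamma\le\gamma\,\bigl\lvert\,\abs{\bs{\theta}(x)}-\abs{\bs{\vartheta}(x)}\,\bigr\rvert$.

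It then remains to absorb the inner difference of norms. Here I would invoke the reverse triangle inequality $\bigl\lvert\,\abs{\bs{\theta}(x)}-\abs{\bs{\vartheta}(x)}\,\bigr\rvert\le\abs{\bs{\theta}(x)-\bs{\vartheta}(x)}$, valid pointwise, and chain it with the previous estimate to obtain \eqref{E<gamma}. The same chain in fact gives the two-sided bound $\bigl\lvert\,\abs{\bs{\theta}(x)}_\gamma-\abs{\bs{\vartheta}(x)}_\gamma\,\bigr\rvert\le\gamma\abs{\bs{\theta}(x)-\bs{\vartheta}(x)}$, i.e.\ $\gamma$-Lipschitz continuity of $\abs{\cdot}_\gamma$, so the one-sided statement follows for free.

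I do not anticipate a genuine obstacle: the result is a soft consequence of the fact that a maximum of Lipschitz functions is Lipschitz with the larger constant, composed with the $1$-Lipschitz norm. The only points requiring mild care are orienting the case analysis in the elementary $\max$ inequality correctly for the one-sided claim, and noting that the pointwise inequality holds for a.e.\ $x$ without measurability issues, since $x\mapsto\abs{\bs{\theta}(x)}_\gamma$ is measurable as a composition of a continuous function with a measurable map.
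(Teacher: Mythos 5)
Your proof is correct, and it takes a genuinely different (and more compact) route than the paper's. The paper partitions $\Omega$ into the four intersections of the sets $\mathcal{A}_\gamma(\bs{\theta})=\{\gamma\abs{\bs{\theta}}\geq\tau_s\}$ and $\mathcal{I}_\gamma(\bs{\theta})=\{\gamma\abs{\bs{\theta}}<\tau_s\}$ with their counterparts for $\bs{\vartheta}$, and verifies the inequality separately on each piece by substituting the explicit value of $\abs{\cdot}_\gamma$ there. You instead factor the estimate through two reusable facts: the scalar inequality $\max\{a,b\}-\max\{a,c\}\leq\abs{b-c}$ (i.e.\ $1$-Lipschitz continuity of $t\mapsto\max\{a,t\}$) and the reverse triangle inequality for the tensor norm; your two cases for the scalar max are exactly the paper's four cases collapsed, since the reverse triangle inequality absorbs the remaining distinction. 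What your approach buys is brevity, the two-sided Lipschitz bound $\bigl\lvert\,\abs{\bs{\theta}}_\gamma-\abs{\bs{\vartheta}}_\gamma\,\bigr\rvert\leq\gamma\abs{\bs{\theta}-\bs{\vartheta}}$ for free, and a cleaner separation of the measure-theoretic reduction from the pointwise algebra. What the paper's version buys is the explicit introduction of the active/inactive sets \eqref{eq:zones_approx}, which are not incidental: the same partition is reused verbatim in the proof of Lemma \ref{lem:prop_mu} (where the case analysis genuinely cannot be collapsed, because the quantity $\bs{\theta}/\abs{\bs{\theta}}_\gamma$ behaves differently on each piece) and the sets reappear as the active/inactive sets of the semismooth Newton iteration. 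So your argument is preferable as a standalone proof of this lemma, while the paper's serves a secondary expository purpose for the rest of the section.
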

\begin{proof}
Let us start defining the following sets, related with the approximations for the yielded and unyielded regions
\begin{equation} \label{eq:zones_approx}
\mathcal{A}_\gamma(\bs{\theta}):=\{x\in \Omega\,:\, \gamma|\bs{\theta}(x)|\geq \tau_s\}\quad\mbox{and}\quad \mathcal{I}_\gamma(\bs{\theta}):=\{x\in \Omega\,:\, \gamma|\bs{\theta}(x)|< \tau_s\}.
\end{equation}
Next, we analyze the behaviour of \eqref{E<gamma} in the following sets $\mathcal{A}_\gamma(\bs{\theta})\cap \mathcal{A}_\gamma(\bs{\vartheta})$, $\mathcal{A}_\gamma(\bs{\theta})\cap \mathcal{I}_\gamma(\bs{\vartheta})$, $\mathcal{I}_\gamma(\bs{\theta})\cap \mathcal{A}_\gamma(\bs{\vartheta})$ and $\mathcal{I}_\gamma(\bs{\theta})\cap \mathcal{I}_\gamma(\bs{\vartheta})$. \\
	
	\noindent \underline{On $\mathcal{A}_\gamma(\bs{\theta})\cap \mathcal{A}_\gamma(\bs{\vartheta})$}: Here, we have that
	\[
	|\boldsymbol{\theta}(x)|_\gamma - |\boldsymbol{\vartheta}(x)|_\gamma= \gamma(|\boldsymbol{\theta}(x)| - |\boldsymbol{\vartheta}(x)|)\leq \gamma  |\boldsymbol{\theta}(x) -\boldsymbol{\vartheta}(x)|.
	\]
	
	\noindent \underline{On $\mathcal{A}_\gamma(\bs{\theta})\cap \mathcal{I}_\gamma(\bs{\vartheta})$,}: Here, it holds that
	\[
	|\boldsymbol{\theta}(x)|_\gamma - |\boldsymbol{\vartheta}(x)|_\gamma= \gamma|\boldsymbol{\theta}(x)| - \tau_s < \gamma|\boldsymbol{\theta}(x)|- \gamma|\boldsymbol{\vartheta}(x)|\leq  \gamma |\boldsymbol{\theta}(x) -\boldsymbol{\vartheta}(x)|.
	\]
	
	\noindent \underline{On $\mathcal{I}_\gamma(\bs{\theta})\cap \mathcal{A}_\gamma(\bs{\vartheta})$}: Here, we know that
	\[
	|\boldsymbol{\theta}(x)|_\gamma - |\boldsymbol{\vartheta}(x)|_\gamma= \tau_s - \gamma|\boldsymbol{\vartheta}(x)| \leq \tau_s-\tau_s=0\leq  \gamma |\boldsymbol{\theta}(x) -\boldsymbol{\vartheta}(x)|.
	\]
	
	\noindent \underline{On $\mathcal{I}_\gamma(\bs{\theta})\cap \mathcal{I}_\gamma(\bs{\vartheta})$}: Here, we obtain the following
	\[
	|\boldsymbol{\theta}(x)|_\gamma - |\boldsymbol{\vartheta}(x)|_\gamma= \tau_s-\tau_s=0\leq  \gamma |\boldsymbol{\theta}(x) - \boldsymbol{\vartheta}|.
	\]
Thus, since the considered sets provide a disjoint partitioning of $\Omega$, the four estimates above imply \eqref{E<gamma}.
\end{proof}

\begin{lemma}\label{lem:prop_mu}
	Let $\boldsymbol{\theta},\,\boldsymbol{\vartheta}\in \mathbb{L}^p(\Omega)$. Then function $\mu$ satisfies the following properties:
	\begin{align}
		|\mu(|\bs{\theta}|_{\gamma})\bs{\theta}-\mu(|\bs{\vartheta}|_{\gamma})\bs{\vartheta}| &\leq C_1|\bs{\theta}-\bs{\vartheta}|, \label{eq:mu_bound}\\
		C_2|\bs{\theta}-\bs{\vartheta}|^2 &\leq (\mu(|\bs{\theta}|_{\gamma})\bs{\theta}-\mu(|\bs{\vartheta}|_{\gamma})\bs{\vartheta}):(\bs{\theta}-\bs{\vartheta}) \nonumber
	\end{align}
	a.e. in $\Omega$
\end{lemma}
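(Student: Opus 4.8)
The plan is to split the map $\bs{\theta}\mapsto\mu(\abs{\bs{\theta}}_\gamma)\bs{\theta}$ into its linear viscous part and its nonlinear plastic part and to treat each separately. Reading off $\mu$ from \eqref{stress_reg} gives
\[
\mu(\abs{\bs{\theta}}_\gamma)\bs{\theta} = 2\eta\,\bs{\theta} + \tau_s\gamma\,\frac{\bs{\theta}}{\abs{\bs{\theta}}_\gamma},
\]
so, setting $\bs{G}(\bs{\theta}):=\tau_s\gamma\,\bs{\theta}/\abs{\bs{\theta}}_\gamma$, the whole question reduces to showing that $\bs{G}$ is Lipschitz and monotone. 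The linear term contributes $2\eta\abs{\bs{\theta}-\bs{\vartheta}}$ to \eqref{eq:mu_bound} and exactly $2\eta\abs{\bs{\theta}-\bs{\vartheta}}^2$ to the lower bound, which already fixes $C_2=2\eta$ as soon as the plastic part is known to be monotone. Note also that $\abs{\bs{\theta}}_\gamma\ge\tau_s>0$, so $\bs{G}$ is everywhere well defined, including at $\bs{\theta}=0$.

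The key observation I would exploit is that, since $\abs{\bs{\theta}}_\gamma=\max\{\tau_s,\gamma\abs{\bs{\theta}}\}$, the plastic term is a scaled metric projection onto a ball:
\[
\bs{G}(\bs{\theta}) = \gamma\,P(\bs{\theta}), \qquad P(\bs{\theta}):=\frac{\bs{\theta}}{\max\{1,(\gamma/\tau_s)\abs{\bs{\theta}}\}},
\]
where $P$ is precisely the orthogonal projection of $\bs{\theta}$ onto the closed ball of radius $\tau_s/\gamma$ centred at the origin (it returns $\tfrac{\tau_s}{\gamma}\,\bs{\theta}/\abs{\bs{\theta}}$ on the set $\gamma\abs{\bs{\theta}}\ge\tau_s$ and is the identity otherwise). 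Projections onto closed convex sets are firmly nonexpansive, i.e.
\[
\abs{P(\bs{\theta})-P(\bs{\vartheta})}^2 \le \bigl(P(\bs{\theta})-P(\bs{\vartheta})\bigr):(\bs{\theta}-\bs{\vartheta}),
\]
and from this single inequality both claims follow at once: Cauchy--Schwarz gives $\abs{P(\bs{\theta})-P(\bs{\vartheta})}\le\abs{\bs{\theta}-\bs{\vartheta}}$, hence $\abs{\bs{G}(\bs{\theta})-\bs{G}(\bs{\vartheta})}\le\gamma\abs{\bs{\theta}-\bs{\vartheta}}$ and $C_1=2\eta+\gamma$; while nonnegativity of the right-hand side yields monotonicity of $\bs{G}$ and therefore the lower bound with $C_2=2\eta$.

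If one prefers a self-contained argument that avoids quoting convex-analysis machinery, the alternative is a direct case analysis on the four sets $\mathcal{A}_\gamma(\bs{\theta})\cap\mathcal{A}_\gamma(\bs{\vartheta})$, $\mathcal{A}_\gamma(\bs{\theta})\cap\mathcal{I}_\gamma(\bs{\vartheta})$, $\mathcal{I}_\gamma(\bs{\theta})\cap\mathcal{A}_\gamma(\bs{\vartheta})$ and $\mathcal{I}_\gamma(\bs{\theta})\cap\mathcal{I}_\gamma(\bs{\vartheta})$ introduced in \eqref{eq:zones_approx}, exactly as in the proof of Lemma~\ref{lem:E<gamma}. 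On the two sets where $\bs{\theta}$ and $\bs{\vartheta}$ lie in the same regime, $\bs{G}$ is either linear with slope $\gamma$ or the constant-magnitude map $\tau_s\bs{\theta}/\abs{\bs{\theta}}$, and the estimates are immediate. The main obstacle is the pair of mixed sets, where $\bs{G}(\bs{\theta})=\tau_s\bs{\theta}/\abs{\bs{\theta}}$ while $\bs{G}(\bs{\vartheta})=\gamma\bs{\vartheta}$: there one must control the cross term $\bs{\theta}:\bs{\vartheta}$ by $\abs{\bs{\theta}}\,\abs{\bs{\vartheta}}$ and exploit the defining inequalities $\gamma\abs{\bs{\theta}}\ge\tau_s>\gamma\abs{\bs{\vartheta}}$. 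For the monotonicity bound this collapses the expanded expression to the manifestly nonnegative product $(\abs{\bs{\theta}}-\abs{\bs{\vartheta}})(\tau_s-\gamma\abs{\bs{\vartheta}})$, and a parallel computation handles \eqref{eq:mu_bound}. I expect this mixed-region bookkeeping to be the only delicate part of either route.
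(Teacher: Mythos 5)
Your proposal is correct, and your primary route is genuinely different from the paper's. The paper proves both inequalities by the same four-set case analysis you describe as your fallback: it splits $\frac{\bs{\theta}}{\abs{\bs{\theta}}_\gamma}-\frac{\bs{\vartheta}}{\abs{\bs{\vartheta}}_\gamma}$ as $\bigl(\frac{1}{\abs{\bs{\theta}}_\gamma}-\frac{1}{\abs{\bs{\vartheta}}_\gamma}\bigr)\bs{\vartheta}+\frac{1}{\abs{\bs{\theta}}_\gamma}(\bs{\theta}-\bs{\vartheta})$, bounds each piece region by region on $\mathcal{A}_\gamma(\bs{\theta})\cap\mathcal{A}_\gamma(\bs{\vartheta})$, the two mixed sets, and $\mathcal{I}_\gamma(\bs{\theta})\cap\mathcal{I}_\gamma(\bs{\vartheta})$ for the Lipschitz estimate, and for monotonicity invokes Lemma~\ref{lem:E<gamma} together with $\abs{\bs{\vartheta}/\abs{\bs{\vartheta}}_\gamma}\leq 1/\gamma$ to show the plastic contribution is nonnegative, so that $C_2=2\eta$ comes entirely from the viscous term --- exactly as in your decomposition. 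Your observation that the plastic part equals $\gamma P$ with $P$ the metric projection onto the closed ball of radius $\tau_s/\gamma$ (in the Frobenius inner product) replaces all of this bookkeeping with the single standard fact that projections onto closed convex sets are firmly nonexpansive; both inequalities then drop out in one line, and you even get a sharper Lipschitz constant ($C_1=2\eta+\gamma$ versus the paper's effective $2\eta+2\gamma$, since its active-region estimate yields the factor $2\gamma$). The trade-off is that your argument imports a convex-analysis fact, whereas the paper's is fully self-contained and reuses the sets \eqref{eq:zones_approx} that it needs anyway to discuss yielded/unyielded regions; your identification of the mixed regime as the only delicate case, and the collapse of the monotonicity expression there to $(\abs{\bs{\theta}}-\abs{\bs{\vartheta}})(\tau_s-\gamma\abs{\bs{\vartheta}})\geq 0$, is a correct account of where that elementary route requires care.
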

\begin{proof}
For the first result, from \eqref{stress_reg} we have:
\begin{align}
	|\mu(|\bs{\theta}|_{\gamma})\bs{\theta}-\mu(|\bs{\vartheta}|_{\gamma})\bs{\vartheta}| &= \abs*{2\eta (\bs{\theta}-\bs{\vartheta}) + \tau_s \gamma\left(\frac{1}{|\bs{\theta}|_{\gamma}}\bs{\theta} - \frac{1}{|\bs{\vartheta}|_{\gamma}}\bs{\vartheta}\right)}  \nonumber \\
	&\leq 2\eta|\bs{\theta}-\bs{\vartheta}| + \tau_s \gamma\abs*{\frac{\bs{\theta}}{|\bs{\theta}|_{\gamma}} - \frac{\bs{\vartheta}}{|\bs{\vartheta}|_{\gamma}}} \label{eq:lem_1}
\end{align}

Following the same lines of the proof of Lemma \ref{E<gamma}, we analyze the behaviour of the second term on the right hand side of \eqref{eq:lem_1} in the following sets $\mathcal{A}_\gamma(\bs{\theta})\cap \mathcal{A}_\gamma(\bs{\vartheta})$, $\mathcal{A}_\gamma(\bs{\theta})\cap \mathcal{I}_\gamma(\bs{\vartheta})$, $\mathcal{I}_\gamma(\bs{\theta})\cap \mathcal{A}_\gamma(\bs{\vartheta})$ and $\mathcal{I}_\gamma(\bs{\theta})\cap \mathcal{I}_\gamma(\bs{\vartheta})$. \\
	
	\noindent \underline{On $\mathcal{A}_\gamma(\bs{\theta})\cap \mathcal{A}_\gamma(\bs{\vartheta})$}: Here, we deduce that
	\begin{align*}
	\tau_s \gamma\abs*{\frac{\bs{\theta}}{|\bs{\theta}|_{\gamma}} - \frac{\bs{\vartheta}}{|\bs{\vartheta}|_{\gamma}}} &= \tau_s\gamma\abs*{\left(\frac{1}{|\bs{\theta}|_{\gamma}}-\frac{1}{|\bs{\vartheta}|_{\gamma}}\right)\bs{\vartheta}+\frac{1}{|\bs{\theta}|_{\gamma}}(\bs{\theta}-\bs{\vartheta})}\\
	&\leq \tau_s\gamma \left(\frac{\gamma|\bs{\vartheta}-\bs{\theta}|}{\gamma|\bs{\theta}|}\frac{|\bs{\vartheta}|}{\gamma|\bs{\vartheta}|}+\frac{1}{\gamma |\bs{\theta}|}|\bs{\theta}-\bs{\vartheta}|\right) \\
	&\leq \tau_s \gamma \frac{2}{\gamma|\bs{\theta}|}|\bs{\theta}-\bs{\vartheta}| \leq \tau_s \gamma \frac{2}{\tau_s}|\bs{\theta}-\bs{\vartheta}|
	\end{align*}
	\noindent \underline{On $\mathcal{A}_\gamma(\bs{\theta})\cap \mathcal{I}_\gamma(\bs{\vartheta})$, (also $\mathcal{I}_\gamma(\bs{\theta})\cap \mathcal{A}_\gamma(\bs{\vartheta})$)}: Here, it follows that
	\begin{align*}
	\tau_s \gamma\abs*{\frac{\bs{\theta}}{|\bs{\theta}|_{\gamma}} - \frac{\bs{\vartheta}}{|\bs{\vartheta}|_{\gamma}}} &= \tau_s\gamma\abs*{\left(\frac{1}{|\bs{\theta}|_{\gamma}}-\frac{1}{|\bs{\vartheta}|_{\gamma}}\right)\bs{\vartheta}+\frac{1}{|\bs{\theta}|_{\gamma}}(\bs{\theta}-\bs{\vartheta})}\\
	&\leq \tau_s\gamma \left(\frac{\tau_s-\gamma|\bs{\theta}|}{\gamma|\bs{\theta}|}\frac{|\bs{\vartheta}|}{\tau_s}+\frac{1}{\gamma |\bs{\theta}|}|\bs{\theta}-\bs{\vartheta}|\right) \\
	&\leq \tau_s\gamma \left(\frac{|\bs{\vartheta}-\bs{\theta}|}{\gamma|\bs{\theta}|}\frac{\tau_s}{\tau_s}+\frac{1}{\gamma |\bs{\theta}|}|\bs{\theta}-\bs{\vartheta}|\right) \\
	&\leq \tau_s \gamma \frac{2}{\gamma|\bs{\theta}|}|\bs{\theta}-\bs{\vartheta}| \leq \tau_s \gamma \frac{2}{\tau_s}|\bs{\theta}-\bs{\vartheta}|
	\end{align*}
	
	\noindent \underline{On $\mathcal{I}_\gamma(\bs{\theta})\cap \mathcal{I}_\gamma(\bs{\vartheta})$}: Here, we obtain
	\begin{align*}
	\tau_s \gamma\abs*{\frac{\bs{\theta}}{|\bs{\theta}|_{\gamma}} - \frac{\bs{\vartheta}}{|\bs{\vartheta}|_{\gamma}}} &= \tau_s\gamma\abs*{\left(\frac{\bs{\theta}}{\tau_s}-\frac{\bs{\vartheta}}{\tau_s}\right)}\\
	&\leq \gamma |\bs{\theta}-\bs{\vartheta}|
	\end{align*}
	The four cases and \eqref{eq:lem_1} imply \eqref{eq:mu_bound}.
	For the second result, we have
	
	\begin{align}
	\left(\mu(|\bs{\theta}|_{\gamma})\bs{\theta}-\mu(|\bs{\vartheta}|_{\gamma})\bs{\vartheta}\right):(\bs{\theta}-\bs{\vartheta}) &= \left({2\eta (\bs{\theta}-\bs{\vartheta}) + \tau_s \gamma\left(\frac{1}{|\bs{\theta}|_{\gamma}}\bs{\theta} - \frac{1}{|\bs{\vartheta}|_{\gamma}}\bs{\vartheta}\right)}\right):(\bs{\theta}-\bs{\vartheta})  \nonumber \\
	&= 2\eta|\bs{\theta}-\bs{\vartheta}|^2 + \tau_s \gamma\left({\frac{\bs{\theta}}{|\bs{\theta}|_{\gamma}} - \frac{\bs{\vartheta}}{|\bs{\vartheta}|_{\gamma}}}\right):(\bs{\theta}-\bs{\vartheta}) \label{eq:lem_2}
\end{align}
Note that the second term in \eqref{eq:lem_2} can be rewritten as
	
	\begin{equation*}
	\begin{array}{lll}
	\gamma\tau_s \left[\left(\frac{1}{|\bs{\theta}|_\gamma} -\frac{1}{|\bs{\psi}|_\gamma}\right) \bs{\vartheta} + \frac{1}{|\bs{\theta}|_\gamma}\left(\bs{\theta}-\bs{\vartheta}\right)\right] :(\bs{\theta}  - \bs{\vartheta})\vspace{0.2cm}\\\hspace{3.4cm}= \gamma\tau_s \left[ \frac{1}{|\bs{\theta}|_\gamma}(\bs{\theta} -\bs{\vartheta}) + \left(\frac{|\bs{\vartheta}|_\gamma - |\bs{\theta}|_\gamma}{|\bs{\vartheta}|_\gamma |\bs{\theta}|_\gamma}\right) \bs{\vartheta}\right] :(\bs{\theta} - \bs{\vartheta}) \vspace{0.2cm}\\\hspace{4.5cm} = \gamma\tau_s \frac{1}{|\bs{\theta}|_\gamma} \left[ |\bs{\theta} -\bs{\vartheta}|^2 - \left(\frac{|\bs{\theta}|_\gamma - |\bs{\vartheta}|_\gamma}{|\bs{\psi}|_\gamma}\right) \bs{\vartheta} :(\bs{\theta} - \bs{\vartheta}) \right].
	\end{array}
	\end{equation*}
Then, thanks to Lemma \ref{lem:E<gamma}, the Cauchy-Schwarz inequality,  and since $\left|\frac{\bs{\vartheta}(x)}{|\bs{\vartheta}(x)|_\gamma}\right|\leq \frac{1}{\gamma}$ a.e. in $\Omega$, we conclude that
	\begin{equation*}
	\begin{array}{lll}
\gamma\tau_s \left[\left(\frac{1}{|\bs{\theta}|_\gamma} -\frac{1}{|\bs{\vartheta}|_\gamma}\right) \bs{\vartheta} + \frac{1}{|\bs{\theta}|_\gamma}\left(\bs{\theta}-\bs{\vartheta}\right)\right] :(\bs{\theta}  - \bs{\vartheta})\vspace{0.2cm}\\\hspace{3.4cm}\geq \gamma\tau_s \frac{1}{|\bs{\theta}|_\gamma} \left[ |\bs{\theta} -\bs{\vartheta}|^2 - \gamma |\bs{\theta} - \bs{\vartheta} |^2 \left|\frac{\bs{\vartheta}(x)}{|\bs{\vartheta}(x)|_\gamma}\right|\right]\geq 0,
	\end{array}
	\end{equation*}	
\end{proof}

Now, recall that the Sobolev embedding Theorem (for instance as in \cite{Adams2003}) establishes the continuous injection $i_r: H^r(\Omega) \to L^{2p}(\Omega)$, where
\begin{align*}
	2p = \begin{cases}
		\frac{1}{1-r} & \text{ if } d=2, \\
		\frac{6}{3-2r} & \text{ if } d=3, \\
	\end{cases}
\end{align*}
and there holds,
\begin{align}
	\norm{v}_{L^{2p}(\Omega)} \leq C_r \norm{v}_{r,\Omega} & \text{ for all }v \in H^r(\Omega). \label{eq:sob_em}
\end{align}

Now we apply the Cauchy-Schwarz and H\"older inequalities, and \eqref{eq:sob_em} to prove that the variational forms defined above are continuous for all 
$\mb{u}, \mb{v}, \mb{w} \in \mb{V}$, $q\in\mathcal{Q}$, and $\rho, \zeta \in \mathcal{W}$:
\begin{subequations}
	\begin{align*}
		\abs[\big]{a_2(\mb{u},\mb{v})} &\leq C_a \norm{\mb{u}}_{1,\Omega} \norm{\mb{v}}_{1,\Omega}, \\
		\abs[\big]{b(\mb{v},q)} &\leq C_b \norm{\mb{v}}_{1,\Omega} \norm{q}_{0,\Omega}, \\
		\abs[\big]{c_2(\rho\mb{w};\mb{u},\mb{v})} &\leq C_c \norm{\rho}_{1,\Omega}\norm{\mb{w}}_{1,\Omega} \norm{\mb{u}}_{1,\Omega} \norm{\mb{v}}_{1,\Omega}, \\ 
		\abs[\big]{c_1(\mb{u};\rho,\zeta)} &\leq \hat{C}_c \norm{\mb{u}}_{1,\Omega} \norm{\rho}_{1,\Omega} \norm{\zeta}_{1,\Omega}.
	\end{align*}
\end{subequations}

We also recall (from \cite[Chapter I, Lemma 3.1]{Girault2005}, for instance) the following Poincar\'e-Friedrichs inequality:
\begin{align} \label{eq:poincare}
	\norm{\varphi}_{0,\Omega} \leq C_p \snorm{\varphi}_{1,\Omega},\qquad \text{for all $\varphi \in H_0^1(\Omega)$}.
\end{align}

Next, we consider the bilinear form $a_2(\cdot,\cdot)$. Note that, for $\mb{v} \in \mb{H}_0^1(\Omega)$, we have that
\[
\begin{array}{lll}
	a_2(\mb{v},\mb{v}) &=&2\eta \int_\Omega \abs{\mathbf{D} \mb{v}}^2\,dx + \gamma\tau_s \int_ \Omega \frac{\abs{\mathbf{D}\mb{v}}^2}{\abs{\mathbf{D}\mb{v}}_\gamma}\,dx\vspace{0.2cm}\\ &\geq&  C \norm{\mathbf{D} \mb{v}}^2_{0,\Omega}\,dx.
\end{array}
\]
Hence, Korn's inequality and inequality \eqref{eq:poincare} readily gives the coercivity of $a_2$, i.e., there exists a positive constant~$\alpha_a$ such that  
\begin{subequations}
	\begin{align}
		a_2(\mb{v},\mb{v}) & \geq \alpha_a \norm{\mb{v}}^2_{1,\Omega}, \qquad \text{for all $\mb{v} \in \mb{H}^1_0({\Omega})$.}  \label{eq:coerciv_a}
\end{align}\end{subequations}
Using the definition and characterisation of the kernel of the bilinear form $b(\cdot,\cdot)$, we can write 
\begin{align*} 
	\mb{K}\coloneqq\bigl\{ \mb{v} \in \mb{H}^1_0(\Omega)\, : \, b(\mb{v},q)=0, \quad \forall q \in L_0^2(\Omega) \bigr\} = \bigl\{ \mb{v} \in \mb{H}_0^1(\Omega)
	\, : \, \diver \mb{v} = 0 \; \text{in $\Omega$} \bigr\},   
\end{align*}
and applying integration by parts, we can readily observe that (see \cite[Section 2.2]{Li2013}, \cite[Lemma 1]{Guermond2000}, also \cite{Pyo2007}) 
\begin{align} 
	\text{$c_2(\rho\mb{w};\mb{v},\mb{v}) = 0$  \quad and  \quad $c_1(\mb{w};\rho,\rho) = 0$, \quad for all  
		$\mb{w} \in \mb{K}, \mb{v} \in \mb{H}^1(\Omega),  \rho \in H^1(\Omega)$.} 
	\label{eq:c_zero}
\end{align}

Finally, it is well known that the bilinear form $b(\cdot,\cdot)$ satisfies the inf-sup condition (see, e.g., \cite{temam77}):
\begin{align*}
	\sup_{\mb{v} \in \mb{H}_0^1(\Omega) \backslash \{\mb{0}\}} \frac{b(\mb{v},q)}{\norm{\mb{v}}_{1,\Omega}} &\geq \beta \norm{q}_{0,\Omega}, 
	\quad \text{for all $q \in L_0^2(\Omega)$}.   
\end{align*}

\begin{lemma}[Stability]If $\mb{f} \in L^{\infty}(0,T;\mb{L}^{2}(\Omega))$, $\mb{u}_0 \in \mb{L}^2(\Omega)$ and $\rho_0 \in L^2(\Omega)$, then, for any solution $(\mb{u}, \rho)$ of \eqref{eq:var_prob} and for $t \in (0,T]$, there exists a constant $C>0$ such that
	\begin{align*}
		\norm{\sigma\mb{u}}_{L^2(0,t;\mb{H}^1(\Omega))} + \norm{\rho}_{L^2(0,t;H^1(\Omega))} &\leq C \bigl(\norm{\sqrt{\rho_0}\mb{u}_0}_{0,\Omega} + \norm{\rho_0}_{0,\Omega} + \norm{\mb{f}}_{L^{\infty}(0,T;\mb{L}^2(\Omega))}\bigr),
	\end{align*}
\end{lemma}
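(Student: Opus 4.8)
The plan is to obtain the estimate through the classical energy method, testing each equation of \eqref{eq:var_prob} with the solution itself and exploiting the two structural cancellations in \eqref{eq:c_zero} together with the coercivity \eqref{eq:coerciv_a} of $a_2$. Since the third equation of \eqref{eq:var_prob} forces $\diver\mb{u}=0$, we may use $\mb{u}\in\mb{K}$ throughout, which is precisely the hypothesis under which the convective forms vanish.

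First I would take $\zeta=\rho$ in the density equation. By \eqref{eq:c_zero} we have $c_1(\mb{u};\rho,\rho)=0$, so that $\int_\Omega\partial_t\rho\,\rho\dx=\tfrac12\tfrac{d}{dt}\norm{\rho}_{0,\Omega}^2=0$; hence $\norm{\rho(t)}_{0,\Omega}=\norm{\rho_0}_{0,\Omega}$ for every $t$, giving at once the $L^2(0,t;L^2(\Omega))$ control of the density. Next I would take $\mb{v}=\mb{u}$ in the momentum equation. Using the pointwise identity $\sigma\partial_t(\sigma\mb{u})=\rho\partial_t\mb{u}-\tfrac12\nabla\cdot(\rho\mb{u})\mb{u}$ recorded above, a direct computation gives $\int_\Omega\sigma\partial_t(\sigma\mb{u})\cdot\mb{u}\dx=\tfrac12\tfrac{d}{dt}\norm{\sigma\mb{u}}_{0,\Omega}^2$. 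Because $c_2(\rho\mb{u};\mb{u},\mb{u})=0$ by \eqref{eq:c_zero} and $b(\mb{u},p)=-\int_\Omega p\,\diver\mb{u}\dx=0$, the momentum balance collapses to
\[
\tfrac12\tfrac{d}{dt}\norm{\sigma\mb{u}}_{0,\Omega}^2+a_2(\mb{u},\mb{u})=\int_\Omega\mb{f}\cdot\mb{u}\dx.
\]

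Invoking the coercivity \eqref{eq:coerciv_a}, bounding the right-hand side by Cauchy--Schwarz together with the Poincar\'e inequality \eqref{eq:poincare}, and using Young's inequality to absorb a factor $\tfrac{\alpha_a}{2}\norm{\mb{u}}_{1,\Omega}^2$ on the left, I would arrive at $\tfrac{d}{dt}\norm{\sigma\mb{u}}_{0,\Omega}^2+\alpha_a\norm{\mb{u}}_{1,\Omega}^2\leq \tfrac{C_p^2}{\alpha_a}\norm{\mb{f}}_{0,\Omega}^2$. Integrating over $(0,t)$ and using $\int_0^t\norm{\mb{f}}_{0,\Omega}^2\ds\leq T\norm{\mb{f}}_{L^\infty(0,T;\mb{L}^2(\Omega))}^2$ yields
\[
\norm{\sigma\mb{u}(t)}_{0,\Omega}^2+\alpha_a\int_0^t\norm{\mb{u}}_{1,\Omega}^2\ds\leq\norm{\sqrt{\rho_0}\mb{u}_0}_{0,\Omega}^2+\tfrac{C_p^2}{\alpha_a}T\norm{\mb{f}}_{L^\infty(0,T;\mb{L}^2(\Omega))}^2,
\]
which controls $\norm{\sigma\mb{u}}_{L^\infty(0,t;\mb{L}^2(\Omega))}$ and $\norm{\mb{u}}_{L^2(0,t;\mb{H}^1(\Omega))}$ by the stated right-hand side. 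Note that no Gronwall step is needed at this stage, since the two energy identities decouple.

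The hard part is passing from these natural energy quantities to the precise $H^1$-in-space norms appearing in the statement, namely $\norm{\sigma\mb{u}}_{L^2(0,t;\mb{H}^1(\Omega))}$ and $\norm{\rho}_{L^2(0,t;H^1(\Omega))}$. Both reduce to controlling $\nabla\rho$ in $L^2$, since $\nabla(\sigma\mb{u})=\sigma\nabla\mb{u}+\tfrac{1}{2\sqrt\rho}\,\mb{u}\otimes\nabla\rho$ inherits the density gradient. Because the transport equation carries no diffusion, this gradient bound is not produced by the energy identity above; the standard device is to differentiate the transport equation, obtaining $\partial_t(\partial_i\rho)+\mb{u}\cdot\nabla(\partial_i\rho)=-(\partial_i\mb{u})\cdot\nabla\rho$, to test it with $\partial_i\rho$ (the transport term again vanishing by $\diver\mb{u}=0$), and to close the resulting inequality $\tfrac12\tfrac{d}{dt}\norm{\nabla\rho}_{0,\Omega}^2\leq\norm{\nabla\mb{u}}_{L^\infty(\Omega)}\norm{\nabla\rho}_{0,\Omega}^2$ by Gronwall. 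This last step needs velocity regularity of the type $\mb{u}\in L^1(0,t;\mb{W}^{1,\infty}(\Omega))$ and a positive lower bound on $\rho$, which exceed what the energy estimate alone delivers; I would therefore invoke the additional regularity of solutions guaranteed by the well-posedness theory cited in \cite{Bohm} to complete the passage to the stated norms.
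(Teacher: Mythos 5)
Your energy argument is exactly the paper's proof: testing the transport equation with $\rho$ and using \eqref{eq:c_zero} to get $\norm{\rho(\cdot,t)}_{0,\Omega}\leq\norm{\rho_0}_{0,\Omega}$, then testing the momentum equation with $\mb{u}\in\mb{K}$, using the identity for $\sigma\partial_t(\sigma\mb{u})$ (the paper's \eqref{eq:dt-u}), the cancellations \eqref{eq:c_zero}, the coercivity \eqref{eq:coerciv_a}, Young's inequality and integration in time to reach the same two bounds \eqref{eq:res_stab_1}--\eqref{eq:res_stab_2}. Up to that point the two proofs coincide step for step.

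Where you diverge is in your final paragraph, and the divergence is instructive: you correctly observe that the energy identities do not control $\nabla\rho$ (and hence neither $\norm{\rho}_{L^2(0,t;H^1(\Omega))}$ nor $\norm{\sigma\mb{u}}_{L^2(0,t;\mb{H}^1(\Omega))}$ as literally written), and you propose to close this by differentiating the transport equation and invoking Gronwall, which requires $\mb{u}\in L^1(0,t;\mb{W}^{1,\infty}(\Omega))$ and a lower bound on $\rho$ --- hypotheses not present in the lemma. The paper does none of this: its proof stops at exactly the bounds you derived and simply asserts that the stated estimate follows from \eqref{eq:res_stab_1} and \eqref{eq:res_stab_2}. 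In other words, the quantities actually controlled are $\norm{\rho}_{L^\infty(0,t;L^2(\Omega))}$, $\norm{\sigma\mb{u}}_{L^\infty(0,t;\mb{L}^2(\Omega))}$ and $\norm{\mb{u}}_{L^2(0,t;\mb{H}^1(\Omega))}$, and the $H^1$-in-space norms on $\sigma\mb{u}$ and $\rho$ in the lemma's statement should be read as a misprint for these. So your instinct about the gap is sound, but the extra transport-differentiation machinery is not what the intended proof uses, and under the lemma's stated hypotheses it could not be carried out anyway; the correct resolution is to weaken the norms in the conclusion rather than to strengthen the argument.
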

\begin{proof}
	First taking $\zeta = \rho$, in the first equation of \eqref{eq:var_prob} and using \eqref{eq:c_zero}, we obtain the following identity:
	\begin{align*}
		\frac{1}{2} \frac{\mathrm{d}}{\mathrm{d} t}\norm{\rho}_{0,\Omega}^2 = 0
	\end{align*}
	Integrating this equation between $0$ and $t$ yields, in particular, that
	\begin{align}\label{eq:res_stab_1}
		\norm{\rho(\cdot,t)}_{0,\Omega} \leq \norm{\rho_0}_{0,\Omega}. 
	\end{align}

	For the momentum equation, we first deduce
	\begin{align*}
		\int_{\Omega} \rho \mb{u} \cdot \partial_t \mb{u} \dx &= \frac{1}{2} \int_{\Omega} \rho \frac{\partial |\mb{u}|^2}{\partial t} \dx = \frac{1}{2} \int_{\Omega} \biggl( \frac{\partial(\rho|\mb{u}|^2)}{\partial t}+(\mb{u} \cdot\nabla \rho+\frac{\rho}{2}\nabla \cdot \mb{u})|\mb{u}|^2\biggr) \dx \\
		\int_{\Omega} \rho \mb{u} \cdot \nabla \mb{u}\cdot \mb{u} \dx &= \frac{1}{2} \int_{\Omega}\rho \mb{u} \cdot \nabla |\mb{u}|^2 \dx = \frac{1}{2}\int_{\Omega} \biggl(\nabla \cdot (\rho \mb{u} |\mb{u}|^2) - (\mb{u}\cdot \nabla \rho + \rho \nabla \cdot \mb{u})|\mb{u}|^2 \biggr)\dx
	\end{align*}
	which in turn, implies,
	\begin{align} \label{eq:dt-u}
		\mb{u} \cdot \biggl[ \rho \partial_t \mb{u} +  \rho \mb{u} \cdot \nabla \mb{u} \biggr] = \frac{1}{2} \biggl[ \frac{(\rho|\mb{u}|^2)}{\partial t} + \nabla \cdot (\rho \mb{u} |\mb{u}|^2) \biggr]
	\end{align}
	Now, we can take $\mb{u}$ on $\mb{K}$ and due to the inf-sup condition we can solve an equivalent reduced problem, where $b(\cdot,\cdot)$ is removed from the variational form \eqref{eq:var_prob}. Setting $\mb{v}=\mb{u}$, using the incompressibility condition, \eqref{eq:c_zero}, \eqref{eq:coerciv_a} and \eqref{eq:dt-u}, we have

	\begin{align*}
		\frac{1}{2} \frac{\mathrm{d}}{\mathrm{d} t}\norm{\sigma\mb{u}}_{0,\Omega}^2 + \alpha_a \norm{\mb{u}}_{1,\Omega}^2 \leq \norm{\mb{f}}_{0,\Omega}\norm{\mb{u}}_{0,\Omega}.
	\end{align*}
	Now we use Young's  inequality with $\varepsilon = \alpha_a/4$ to get
	\begin{align*}
		\frac{1}{2} \frac{\mathrm{d}}{\mathrm{d} t}\norm{\sigma\mb{u}}_{0,\Omega}^2 + \frac{\alpha_a}{2} \norm{\mb{u}}_{1,\Omega}^2 \leq C\norm{\mb{f}}_{0,\Omega}^2.
	\end{align*}
	Analogously, after integrating from~$0$ to~$t$ we find that
	\begin{align} \label{eq:res_stab_2}
		\norm{\sigma\mb{u}(\cdot,t)}_{0,\Omega} + \alpha_a \int_0^t \norm{\mb{u}(\cdot,z)}_{1,\Omega}^2 \dz \leq \norm{\sqrt{\rho_0} \mb{u}_0}_{0,\Omega} + C\int_{0}^t\norm{\mb{f}}_{0,\Omega}\dz.
	\end{align}
Finally, we derive the sought result from \eqref{eq:res_stab_1} and \eqref{eq:res_stab_2}.
\end{proof}

\section{The discrete formulation} \label{sec:Discr}

In this section we introduce the Galerkin scheme associated with problem \eqref{eq:var_prob}.

\subsection{The time semi-discrete problem}
In order to describe the time discretization of equation \eqref{eq:main_prob}, we introduce a partition of the interval $[0, T ]$ into subintervals $[t_{n-1}, t_n]$, $1 \leq n \leq N$,  such that $0 = t_0 < t_1 < \dots < t_N = T$. We  use the implicit BDF2 scheme, where all first-order time derivatives are approximated using the centered operator  
\begin{align} \label{centrop} 
	\partial_t \mb{u} (t_{n+1})\approx \frac{1}{\Dt}\biggl(\frac{3}{2}\mb{u}^{n+1}-2\mb{u}^n
	+\frac{1}{2}\mb{u}^{n-1}\biggr), \end{align} 
(similarly for $\partial_t \rho$), and for the first time step a first-order backward Euler method is used 
from $t^0$ to $t^1$, starting from the interpolates $\mb{u}^0$ and $\rho^0$ of the initial data.  

In what follows, we define the difference operator 
\[ 
\mathcal{D} y^{n+1}  \coloneqq  3 y^{n+1} - 4 y^n   + y^{n-1}, 
\]
for any quantity indexed by the time step $n$.  For instance,  \eqref{centrop} can be written as 
$\partial_t \mb{u}(t_{n+1})\approx  \frac{1}{2 \Delta t}  \mathcal{D} \mb{u}^{n+1}$. 

In the following sections, we discuss a dG-$H(\mathrm{div})$-FEM discretization for the space variables, and present the fully discretized system to be solved by a semismooth Newton iteration. 

\subsection{A Divergence-conforming-dG FEM coupled scheme}

Let $\{\bs{\mathcal{T}}_h\}_{h>0}$ be a regular family of triangulations of $\Omega$ by simplices $K$ (triangles in $\mathbb{R}^2$ and  tetrahedra in $\mathbb{R}^3$ respectively), and set $h := \max \{h_K : K \in \bs{\mathcal{T}}_h\}$, where $h_k$ is the diameter of the element $K$. 
We   label by  $K^-$ and $K^+$ the two elements adjacent to a facet $e$ (an edge in 2D or a face in 3D). Let $\Eh$  denote the set of all facets and $\Eh = \Eh^i \cup \Eh^{\partial}$ where $\Eh^i$ and $\Eh^{\partial}$ are the subset of interior facets and boundary facets, respectively.  If $\mb{v}$ and $w$ are a smooth vector and a scalar field defined on $\{\bs{\mathcal{T}}_h\}$,  then  ($\mb{v}^\pm, w^\pm$)  denote the traces of ($\mb{v},w$) on $e$ 
that are  the extensions from the interior of $K^+$ and $K^-$, respectively.  Let $\mb{n}^+_e$, $\mb{n}^-_e$~be the outward unit normal vectors on the boundaries of two neighboring elements sharing the facet $e$, $K^+$ and~$K^-$, respectively. We also use the notation $(\mb{w}_e\cdot\mb{n}_e)|_{e}=(\mb{w}^{+}\cdot\mb{n}^{+}_e)|_{e}$. 
The average $\dmean{\cdot}$ and jump $\djump{\cdot }$ operators on $e\in \Eh^{i}$ are defined as 
\begin{align*}
	\dmean{\mb{v}} &\coloneqq (\mb{v}^-+\mb{v}^+)/2, \quad \dmean{w} \coloneqq  (w^-+w^+)/2, \quad 
	\djump{\mb{v} } \coloneqq  (\mb{v}^- - \mb{v}^+), \quad \djump{w } \coloneqq  (w^- - w^+),
\end{align*}
whereas, for jumps and averages on $e\in \Eh^{\partial}$, for notational convenience, we adopt the conventions  $\dmean{\mb{v}} = \djump{\mb{v}} = \mb{v}$,  and $\dmean{w} = \djump{w} = w$.  Moreover,  $\mathbf{D}_h$ will  denote  the broken analogous of operator $\mathbf{D}$.

For $k\geq 1$, consider the following finite element subspaces:
\begin{align*}
	\mb{V}_h &\coloneqq \bigl\{ \mb{v}_h \in \mb{H}(\Div; \Omega): \mb{v}_h|_K \in [\mathbb{P}^{k}(K)]^{\mathrm{d}}\quad\forall K \in \mathcal{T}_h \bigr\}, \\
	\mathcal{Q}_h &\coloneqq \bigl\{ q_h \in L^2_0(\Omega) : q_h|_K \in \mathbb{P}^{k-1}(K)\quad\forall K \in \mathcal{T}_h\bigr\}, \\
	\mathcal{W}_h &\coloneqq \bigl\{ s_h \in L^2({\Omega}) : l_h|_K \in \mathbb{P}^{k-1}(K) \quad\forall K \in \mathcal{T}_h \bigr\}.
\end{align*}

Associated with these finite-dimensional spaces, we state the following semi-discrete Galerkin formulation: Find $(\rho_h, \mb{u}_h, p_h) \in \mathcal{W}_h\times \mb{V}_h\times \mathcal{Q}_h$, such that, for all $(\zeta_h, \mb{v}_h,q_h) \in \mathcal{W}_{h} \times \mb{V}_h \times \mathcal{Q}_h$, it holds that
\begin{align*}
	\begin{split} 
		(\partial_t \rho_h, \zeta_h)_{\Omega} + c_1^h(\mb{u}_h, \rho_h, \zeta_h) &= 0, \\
		(\sigma_h \partial_t(\sigma_h \mb{u}_h), \mb{v}_h)_{\Omega} + a^h_2(\mb{u}_h,\mb{v}_h) + c^h_2(\rho_h\mb{u}_h;\mb{u}_h,\mb{v}_h) + b(p_h,\mb{v}_h) &= ( \mb{f},\mb{v}_h)_{\Omega},\\
		b(q_h, \mb{u}_h) &= 0.
	\end{split}
\end{align*}
Here $\sigma_h = \sqrt{\rho_h}$. Moreover, the discrete versions of the forms $a_2^h(\cdot,\cdot)$, $c_2^h(\cdot,\cdot;\cdot,\cdot)$ and $c_1^h(\cdot;\cdot,\cdot)$ are defined by using a symmetric interior penalty approach in the first case and upwind approach for the two convective terms:
\begin{equation}
\begin{split}
a^h_2(\mb{u}_h, \mb{v}_h)  
	& \coloneqq \int_{\Omega} \bigl(\mu(\abs{\mathbf{D}_h \mb{u}_h}_{\gamma}) \mathbf{D}_h\mb{u}_h:\mathbf{D}_h\mb{v}_h \bigr)\dx   \label{eq:a2hdef}\\
	&  \quad + \sum_{e\in\mathcal{E}_h} \int_e \biggl( -\dmean{\mu(\abs{\mathbf{D}_h \mb{u}_h}_{\gamma}) \mathbf{D}_h(\mb{u}_h)\mb{n}_e} \cdot \djump{\mb{v}_h }  \\
	& - \dmean{\mu(\abs{\djump{\mathbf{D}_h \mb{u}_h}}_{\gamma}) \mathbf{D}_h(\mb{v}_h)\mb{n}_e}\cdot\djump{\mb{u}_h } + \frac{a_0}{h_e} \djump{\mb{u}_h } : \djump{\mb{v}_h } \biggr) \ds,
\end{split}
\end{equation}
\begin{equation*}
\begin{split}
c^h_1 (\mb{u}_h; \rho_h, \zeta_h) & \coloneqq \int_\Omega (\mb{u}_h \cdot \nabla \rho_h)\zeta_h \dx - \sum_{e\in\mathcal{E}_h} \int_e (\mb{u}_h\cdot \mb{n}_e) \djump{\rho_h}\dmean{\zeta_h}\ds \\
	& \quad + \sum_{e\in\mathcal{E}_h} \int_e \frac{1}{2}\abs{\mb{u}_h\cdot \mb{n}_e} \djump{\rho_h}\cdot\djump{\zeta_h}\ds,\\
\end{split}
\end{equation*}
\begin{equation*}
\begin{split}
c^h_2 (\rho_h \mb{w}_h; \mb{u}_h, \mb{v}_h) & \coloneqq c_2(\rho_h \mb{w}_h; \mb{u}_h, \mb{v}_h)  \\& \quad - \sum_{e\in\mathcal{E}_h} \int_e (\rho_h\mb{w}_h\cdot \mb{n}_e) \djump{\mb{u}_h}\cdot\dmean{\mb{v}_h}\ds
	 + \sum_{e\in\mathcal{E}_h} \int_e \frac{1}{2}\abs{\rho_h\mb{w}_h\cdot \mb{n}_e} \djump{\mb{u}_h}\cdot\djump{\mb{v}_h}\ds,
\end{split}
\end{equation*}
where $a_0>0$ is a jump penalization parameter.

\subsection{Complete discrete scheme}
We now define the approximate sequences $\{\rho^n_h\}_{n=0,\dots N}$, $\{\mb{u}_h^n\}_{n=0,\dots N}$ and $\{p^n_h\}_{n=0,\dots N}$ as follows: For $1 \leq n \leq N-1$, solve:

\begin{align}  \label{eq:full_disc_scheme} 
	\begin{split} 
		\frac{1}{2 \Dt} \bigl(\mathcal{D} \rho_h^{n+1}, \zeta_h \bigr)_{\Omega} + c_1^h(\mb{u}_h^{n+1}, \rho_h^{n+1}, \zeta_h) &= 0, \\
		\frac{1}{2\Dt} \bigl( \sigma_h^{n+1} \mathcal{D} (\sigma_h^{n+1} \mb{u}_h^{n+1}\bigr), \mb{v}_h)_{\Omega} + a^h_2(\mb{u}_h^{n+1},\mb{v}_h)   \\\hspace{2cm}+ c^h_2(\rho_h^{n+1}\mb{u}_h^{n+1};\mb{u}_h^{n+1},\mb{v}_h)
		+ b(p_h^{n+1},\mb{v}_h) &= (\mb{f}^{n+1},\mb{v}_h)_{\Omega},\\
		b(q_h,\mb{u}_h^{n+1}) &= 0.
	\end{split}
\end{align}
for all $\zeta_h \in \mathcal{W}_h$, $\mb{v}_h \in \mb{V}_h$ and $q_h \in \mathcal{Q}_h$. Note that non-homogeneous Dirichlet boundary conditions for the velocity field can be imposed as part of this formulation by using Nitsche's method.

\subsection{Stability analysis of the discrete scheme}\label{sec:stab}

For the subsequent analysis, we introduce,  for $r \geq 0$, the broken $\mb{H}^r$ space as follows.
\[
\mb{H}^r(\Th) \coloneqq \bigl\{ \mb{v} \in \mb{L}^2(\Omega) : \mb{v}|_K \in \mb{H}^r(K),\ K \in \Th \bigr\}, 
\]
as well as the mesh-dependent broken norms
\begin{gather*}
	\norm{\mb{v}}_{*,\Th}^2  \coloneqq \sum_{K\in \Th} \norm{\mathbf{D}_h(\mb{v})}_{0,K}^2 + \sum_{e\in\Eh} \frac{1}{h_e} \norm{ \djump{\mb{v} } }_{0,e}^2, \\ 
	\norm{\mb{v}}_{\Thnorm}^2   \coloneqq \norm{\mb{v}}_{0,\Omega}^2 + \norm{\mb{v}}_{*,\Th}^2 \quad \text{for all $\mb{v} \in \mb{H}^1(\Th)$,}  
	\end{gather*}
We also define the discrete kernel of the bilinear form $b(\cdot,\cdot)$ as 
\begin{align*}
	\mb{K}_h \coloneqq \{ \mb{v}_h\in \mb{V}_h\,:\,b(\mb{v}_h,q_h) = 0\; \forall q_h \in  \mathcal{Q}_h\} = \{ \mb{v}_h\in\mb{V}_h\,:\,\diver \mb{v}_h = 0 \text{ in  $\Omega$}\}.
\end{align*}
Finally, adapting the argument used in \cite[Proposition 4.5]{karakashian1998}, we have the discrete Sobolev embedding:
for $r=2,4$ there exists a constant $C_{\textnormal{emb}}>0$ such that
\begin{align*}
	\norm{\mb{v}}_{\mb{L}^{r}(\Omega)} \leq C_{\textnormal{emb}} \norm{\mb{v}}_{\Thnorm}, \qquad \text{for all }\mb{v} \in \mb{H}^{1}(\Th).
\end{align*}
With these norms, we can establish continuity of the 
bilinear forms constituting the variational formulation. The proof  follows from   \cite[Section 4]{arnold2001} and \cite[Lemma 2.2]{Houston2005}. 
\begin{lemma} \label{lemma4:ab_bounds}
	The following properties hold:
	\begin{align*}
		\abs[\big]{a^h_2(\mb{u},\mb{v})}  &\leq \tilde{C}_a \norm{\mb{u}}_{\Thnorm}\norm{\mb{v}}_{\Thnorm},    &&\text{for all $\mb{u},\mb{v}\in \mb{V}_h$,}  \\
		\abs[\big]{b(\mb{v},q)}  &\leq \tilde{C}_b \norm{\mb{v}}_{\Thnorm}\norm{q}_{0,\Omega},   &&\text{for all $ \mb{v} \in \mb{H}^1(\Th)$, $q\in L^2(\Omega)$.}  
	\end{align*} 
\end{lemma}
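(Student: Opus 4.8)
The plan is to prove the two estimates separately, since the bound on $b$ is elementary while the bound on $a_2^h$ carries all the difficulty from the nonlinearity $\mu$ and from the facet integrals. For $b$ I would simply apply Cauchy--Schwarz together with the pointwise inequality $\abs{\diver\mb{v}} = \abs{\tr(\mathbf{D}_h\mb{v})} \le \sqrt{d}\,\abs{\mathbf{D}_h\mb{v}}$ (the divergence is the trace of the symmetric gradient, and the trace of a $d\times d$ matrix is controlled by its Frobenius norm). Summing over elements gives $\norm{\diver\mb{v}}_{0,\Omega} \le \sqrt{d}\,\norm{\mathbf{D}_h\mb{v}}_{0,\Omega} \le \sqrt{d}\,\norm{\mb{v}}_{\Thnorm}$, so that $\abs{b(\mb{v},q)} \le \norm{q}_{0,\Omega}\norm{\diver\mb{v}}_{0,\Omega} \le \tilde{C}_b \norm{\mb{v}}_{\Thnorm}\norm{q}_{0,\Omega}$ with $\tilde{C}_b = \sqrt{d}$.

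For $a_2^h$ I would split the definition into its four pieces --- the volume term, the two facet consistency/symmetry terms, and the penalty term --- and bound each by $C\norm{\mb{u}}_{\Thnorm}\norm{\mb{v}}_{\Thnorm}$. The volume term is controlled by the linear growth estimate that follows from Lemma~\ref{lem:prop_mu} by taking $\bs{\vartheta}=0$, namely $\abs{\mu(\abs{\bs{\theta}}_\gamma)\bs{\theta}} \le C_1\abs{\bs{\theta}}$ a.e.; applying this pointwise with $\bs{\theta}=\mathbf{D}_h\mb{u}$ and then Cauchy--Schwarz yields $C_1\norm{\mathbf{D}_h\mb{u}}_{0,\Omega}\norm{\mathbf{D}_h\mb{v}}_{0,\Omega}$, which is dominated by the $\ast,\Th$-seminorm. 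The penalty term is immediately bounded by facetwise Cauchy--Schwarz by $a_0\bigl(\sum_e h_e^{-1}\norm{\djump{\mb{u}}}_{0,e}^2\bigr)^{1/2}\bigl(\sum_e h_e^{-1}\norm{\djump{\mb{v}}}_{0,e}^2\bigr)^{1/2} \le a_0\norm{\mb{u}}_{\ast,\Th}\norm{\mb{v}}_{\ast,\Th}$.

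The two facet terms are the crux. For the consistency term I would first use the linear growth bound to obtain $\abs{\mu(\abs{\mathbf{D}_h\mb{u}}_\gamma)\mathbf{D}_h\mb{u}} \le C_1\abs{\mathbf{D}_h\mb{u}}$ on each side of a facet, which reduces the average of the (non-polynomial) flux to the polynomial quantity $\mathbf{D}_h\mb{u}$; the standard discrete trace inequality $\norm{w}_{0,e} \le C h_K^{-1/2}\norm{w}_{0,K}$ for $w|_K$ polynomial then gives $\norm{\dmean{\mu(\abs{\mathbf{D}_h\mb{u}}_\gamma)\mathbf{D}_h\mb{u}\,\mb{n}_e}}_{0,e} \le C h_e^{-1/2}(\norm{\mathbf{D}_h\mb{u}}_{0,K^+}+\norm{\mathbf{D}_h\mb{u}}_{0,K^-})$, using shape-regularity $h_K \simeq h_e$. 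Pairing this with $h_e^{-1/2}\norm{\djump{\mb{v}}}_{0,e}$, applying Cauchy--Schwarz over facets, and resumming via the finite-overlap property (each simplex has at most $d+1$ facets) so that $\sum_e(\norm{\mathbf{D}_h\mb{u}}_{0,K^+}^2+\norm{\mathbf{D}_h\mb{u}}_{0,K^-}^2) \le C\norm{\mathbf{D}_h\mb{u}}_{0,\Omega}^2$, I obtain $C\norm{\mb{u}}_{\ast,\Th}\norm{\mb{v}}_{\ast,\Th}$. The symmetry term needs a different argument, since here the coefficient $\mu(\abs{\djump{\mathbf{D}_h\mb{u}}}_\gamma)$ multiplies $\mathbf{D}_h\mb{v}$ rather than $\mathbf{D}_h\mb{u}$, so the linear growth estimate does not apply; instead I would exploit that $\abs{\cdot}_\gamma \ge \tau_s$ by construction, whence $\mu(\abs{\cdot}_\gamma) = 2\eta + \tau_s\gamma/\abs{\cdot}_\gamma \le 2\eta + \gamma$ is uniformly bounded. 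Pulling out this constant, the term reduces to $C\sum_e \norm{\dmean{\mathbf{D}_h\mb{v}\,\mb{n}_e}}_{0,e}\norm{\djump{\mb{u}}}_{0,e}$, and the same discrete-trace / finite-overlap / Cauchy--Schwarz chain yields $C\norm{\mb{v}}_{\ast,\Th}\norm{\mb{u}}_{\ast,\Th}$. Collecting the four contributions and using $\norm{\cdot}_{\ast,\Th}\le\norm{\cdot}_{\Thnorm}$ produces the constant $\tilde{C}_a$.

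I expect the main obstacle to be precisely the careful handling of these two facet terms: the nonlinearity prevents applying the discrete trace inequality directly to the flux, so one must first linearize the integrand --- through Lemma~\ref{lem:prop_mu} for the consistency term, and through the uniform upper bound $\mu(\abs{\cdot}_\gamma)\le 2\eta+\gamma$ for the symmetry term --- and only afterwards invoke the trace inequality, shape-regularity, and the finite-overlap property. Everything else is a routine bookkeeping of Cauchy--Schwarz inequalities.
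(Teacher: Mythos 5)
Your proof is correct and follows essentially the same route as the paper, which does not write out the argument but defers to the standard SIP continuity analysis of Arnold et al.\ and its quasilinear extension in Houston, Robson and S\"uli --- precisely the volume/consistency/symmetry/penalty splitting you use, with the nonlinearity tamed by the linear-growth bound of Lemma \ref{lem:prop_mu} (equivalently the uniform bound $\mu(\abs{\cdot}_\gamma)\le 2\eta+\gamma$) before invoking discrete trace inequalities and shape-regularity. Your treatment of the two facet terms, including the observation that the symmetry term needs the uniform bound on $\mu$ rather than the growth estimate, fills in the details the paper leaves to the references.
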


Furthermore the following property holds (using Lemma \ref{lem:prop_mu} and following arguments analogous to those in \cite[Lemma 2.3]{Houston2005}, see also \cite[Theorem 2.4]{Congreve2013})
\begin{equation}
	a^h_2(\mb{v},\mb{v}) \geq \tilde{\alpha}_a \norm{\mb{v}}_{\Thnorm}^2 \quad \text{for all $\mb{v}\in \mb{V}_h$,} 
	\label{eq:coerciv_ah}
\end{equation}
provided that the stabilization parameter $a_0>0$ in \eqref{eq:a2hdef} is sufficiently large and independent of the mesh size. 

Let $\mb{w} \in \mb{H}_0(\Div^0;\Omega)$ and let us introduce the following vector and scalar  jump seminorms
	\begin{align*}
		\snorm{\mb{u}_h}_{\mb{w},\mathrm{upw}} \coloneqq \sum_{e \in \Eh^i}\int_{e} \frac{1}{2}\abs{\mb{w}_e \cdot \mb{n}_e} \abs{\djump{\mb{u}_h}}^2 \ds, \\
		\snorm{\rho_h}_{\mb{w},\mathrm{upw}} \coloneqq \sum_{e \in \Eh^i}\int_{e} \frac{1}{2}\abs{\mb{w}_e \cdot \mb{n}_e} \abs{\djump{\rho_h}}^2 \ds. 
\end{align*}
Then, due to the skew-symmetric form of the operators~$c_1^h$ and~$c_2^h$, and the positivity of the non-linear upwind terms (see i.e \cite{Pyo2007} and \cite[Section 2.3.1]{DiPietro2012}), we can write
\begin{subequations}\begin{align}
				c_{1}^h(\mb{w};\psi_h,\psi_h) &= \snorm{\psi_h}_{\mb{w},\mathrm{upw}}^2 \geq0 \quad \text{for all $\psi_h \in \mathcal{W}_{h}$,}
		\label{eq:crhoeq0}\\
		c^h_2(\rho\mb{w};\mb{u},\mb{u})  &= \snorm{\mb{u}}_{\rho\mb{w},\mathrm{upw}}^2 \geq0 \quad \text{for all $\mb{u}\in \mb{V}_h$.} \label{eq:cueq0}
\end{align}\end{subequations}
Finally, we recall from \cite{konno2011} the following discrete inf-sup condition for $b(\cdot,\cdot)$, where $\tilde{\beta}$ is independent of~$h$: 
\begin{equation*}
	\sup_{\mb{v}_h\in \mb{V}_h\backslash\{\mb{0}\}} \frac{b(\mb{v}_h,q_h)}{\norm{ \mb{v}_h}_{1,\Th}} \geq 
	\tilde{\beta} \norm{q_h}_{0,\Omega}, \quad \text{for all $q_h \in \mathcal{Q}_h$.}  
\end{equation*}


\begin{theorem}
	Let $\smash{(\rho_h^{n+1},\mb{u}_h^{n+1}, p_h^{n+1}) \in \mathcal{W}_h\times\mb{V}_h \times \mathcal{Q}_h}$ be a solution of problem \eqref{eq:full_disc_scheme}, {with initial data $(\rho_h^1,\mb{u}_h^1)$ and $(\rho_h^0,\mb{u}_h^0)$}. Then the following bounds are satisfied, where $C_1$ and $C_2$ are constants that are independent of $h$ and $\Dt$:
	\begin{equation} \label{eq:utcs_bounds}
		\begin{array}{lll}
			\norm{\rho_h^{n+1}}^2_{0,\Omega} + \norm{2\rho^{n+1}_h - \rho_h^{n}}^2_{0,\Omega} + \sum_{j=1}^{n} \norm{\Lambda \rho_h^{j}}^2_{0,\Omega} + \sum_{j=1}^{n} \Dt \snorm{\rho^{j+1}_h}^2_{\mb{u}_h^{j},\upw}\vspace{0.2cm}\\\hspace{6cm}\leq C_1 \bigl( \norm{\rho_h^{1}}^2_{0,\Omega} + \norm{2\rho_h^{1} - \rho_h^{0}}^2_{0,\Omega} \bigr), \vspace{0.3cm}\\
			\norm{\sigma_h^{n+1}\mb{u}_h^{n+1}}^2_{0,\Omega} + \norm{2\sigma_h^{n+1}\mb{u}^{n+1}_h - \sigma_h^{n}\mb{u}_h^n}^2_{0,\Omega} +  \sum_{j=1}^{n} \norm{\Lambda \mb{u}_h^j}^2_{0,\Omega}  + \sum_{j=1}^{n} \Dt \norm{\mb{u}^{j+1}_h}^2_{\Thnorm}\vspace{0.2cm}\\\hspace{1cm} + { \sum_{j=1}^{n} \Dt \snorm{\mb{u}_h^j}^2_{\mb{u}_h^j,\mathrm{upw}}} 
			 \leq C_2 \bigl(\norm{\mb{f}}^2_{L^{\infty}(0,T,\mb{L}^2(\Omega))} + \norm{\sigma_h^{1}\mb{u}_h^1}^2_{0,\Omega} + \norm{2\sigma_h^{1}\mb{u}_h^1 - \sigma_h^{0}\mb{u}_h^0}^2_{0,\Omega}  \bigr).
	\end{array}
	\end{equation}
\end{theorem}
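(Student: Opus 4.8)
The plan is to prove the two estimates in \eqref{eq:utcs_bounds} separately, in each case by testing the corresponding equation of \eqref{eq:full_disc_scheme} with the unknown itself and exploiting the $G$-stability of the BDF2 operator $\mathcal{D}$. The algebraic backbone is the identity, valid for any sequence $\{y^{j}\}$ with multiplier $y^{n+1}$,
\begin{equation*}
\bigl(\mathcal{D} y^{n+1}, y^{n+1}\bigr)_{\Omega} = \tfrac12\Bigl( \norm{y^{n+1}}_{0,\Omega}^2 + \norm{2y^{n+1}-y^{n}}_{0,\Omega}^2 - \norm{y^{n}}_{0,\Omega}^2 - \norm{2y^{n}-y^{n-1}}_{0,\Omega}^2 + \norm{\Lambda y^{n+1}}_{0,\Omega}^2 \Bigr),
\end{equation*}
where $\Lambda y^{n+1}\coloneqq y^{n+1}-2y^{n}+y^{n-1}$ is the second difference: the first two terms telescope when summed in $n$, while the last is the nonnegative BDF2 numerical dissipation. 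I would apply this with $y=\rho_h$ for the density bound and with $y=\sigma_h\mb{u}_h$ for the momentum bound.

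For the density estimate I would take $\zeta_h=\rho_h^{n+1}$ in the first equation of \eqref{eq:full_disc_scheme}. Since the third equation forces $\mb{u}_h^{n+1}\in\mb{K}_h$, i.e.\ $\mb{u}_h^{n+1}$ is discretely divergence-free, the positivity property \eqref{eq:crhoeq0} applies and gives $c_1^h(\mb{u}_h^{n+1};\rho_h^{n+1},\rho_h^{n+1})=\snorm{\rho_h^{n+1}}_{\mb{u}_h^{n+1},\upw}^2\ge 0$. Multiplying by $2\Dt$, invoking the $G$-stability identity on $(\mathcal{D}\rho_h^{n+1},\rho_h^{n+1})_{\Omega}$, and summing over $j=1,\dots,n$ telescopes the energy terms down to their values at the first two levels, producing exactly the first line of \eqref{eq:utcs_bounds}.

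For the momentum estimate I would first use the discrete inf-sup condition to restrict to $\mb{v}_h\in\mb{K}_h$, so the pressure term $b(p_h^{n+1},\mb{v}_h)$ drops out, and then set $\mb{v}_h=\mb{u}_h^{n+1}$. The decisive step is to rewrite the discrete inertial term: since $\sigma_h^{n+1}$ is scalar,
\begin{equation*}
\bigl(\sigma_h^{n+1}\mathcal{D}(\sigma_h^{n+1}\mb{u}_h^{n+1}),\mb{u}_h^{n+1}\bigr)_{\Omega}=\bigl(\mathcal{D}(\sigma_h^{n+1}\mb{u}_h^{n+1}),\sigma_h^{n+1}\mb{u}_h^{n+1}\bigr)_{\Omega},
\end{equation*}
so that, writing $\mb{m}_h\coloneqq\sigma_h\mb{u}_h$, this is exactly $(\mathcal{D}\mb{m}_h^{n+1},\mb{m}_h^{n+1})_{\Omega}$ and the $G$-stability identity applies verbatim to $\mb{m}_h$, yielding the telescoping energies $\norm{\sigma_h^{n+1}\mb{u}_h^{n+1}}_{0,\Omega}^2$, $\norm{2\sigma_h^{n+1}\mb{u}_h^{n+1}-\sigma_h^{n}\mb{u}_h^{n}}_{0,\Omega}^2$ and the dissipation $\norm{\Lambda\mb{m}_h^{n+1}}_{0,\Omega}^2$ (the $\norm{\Lambda\mb{u}_h^{n+1}}_{0,\Omega}$ term of the statement, with $\Lambda$ understood to act on $\sigma_h\mb{u}_h$). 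I would then bound the remaining terms: coercivity \eqref{eq:coerciv_ah} gives $a_2^h(\mb{u}_h^{n+1},\mb{u}_h^{n+1})\ge\tilde{\alpha}_a\norm{\mb{u}_h^{n+1}}_{\Thnorm}^2$; the positivity \eqref{eq:cueq0}, again using that $\mb{u}_h^{n+1}$ is divergence-free, gives $c_2^h(\rho_h^{n+1}\mb{u}_h^{n+1};\mb{u}_h^{n+1},\mb{u}_h^{n+1})=\snorm{\mb{u}_h^{n+1}}_{\rho_h^{n+1}\mb{u}_h^{n+1},\upw}^2\ge0$; and the load term is controlled by Cauchy--Schwarz and Young's inequality, using $\norm{\mb{u}_h^{n+1}}_{0,\Omega}\le\norm{\mb{u}_h^{n+1}}_{\Thnorm}$ to absorb a fraction of $\tilde{\alpha}_a\norm{\mb{u}_h^{n+1}}_{\Thnorm}^2$ into the left-hand side. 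Multiplying by $2\Dt$, summing over $j=1,\dots,n$, telescoping the energies, and using $\sum_{j}2\Dt\,C\norm{\mb{f}^{j+1}}_{0,\Omega}^2\le 2CT\norm{\mb{f}}_{L^\infty(0,T;\mb{L}^2(\Omega))}^2$ then gives the second line of \eqref{eq:utcs_bounds}.

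The main obstacle is precisely this inertial term: because the density varies in time, $\sigma_h^{n+1}\mathcal{D}(\sigma_h^{n+1}\mb{u}_h^{n+1})$ is not a priori the clean second difference of a single quantity, and a naive treatment would leave cross terms coupling $\sigma_h$ at different time levels that obstruct telescoping. The resolution is the square-root reformulation $\sigma=\sqrt{\rho}$ highlighted in the remark following \eqref{eq:var_prob}: it is exactly this symmetric splitting that turns the tested inertial term into $(\mathcal{D}\mb{m}_h^{n+1},\mb{m}_h^{n+1})_{\Omega}$ and makes the density-weighted kinetic energy telescope under BDF2, mirroring at the discrete level the exact kinetic-energy balance of the continuous problem. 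A secondary technical point is the first time step, taken by backward Euler rather than BDF2; its energy balance follows from the analogous first-order identity and is what produces the initial-data contributions $\norm{\rho_h^{1}}_{0,\Omega}^2+\norm{2\rho_h^{1}-\rho_h^{0}}_{0,\Omega}^2$ and $\norm{\sigma_h^{1}\mb{u}_h^{1}}_{0,\Omega}^2+\norm{2\sigma_h^{1}\mb{u}_h^{1}-\sigma_h^{0}\mb{u}_h^{0}}_{0,\Omega}^2$ on the right-hand sides, absorbed into the constants $C_1,C_2$.
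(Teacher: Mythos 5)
Your proposal is correct and follows essentially the same route as the paper: the same BDF2 $G$-stability identity, testing with $\rho_h^{n+1}$ and $\mb{u}_h^{n+1}$ (the paper scales by $4$ and uses $q_h=4p_h^{n+1}$ to kill the pressure rather than restricting to $\mb{K}_h$, which is equivalent), the same use of the positivity of the upwinded convective forms, coercivity of $a_2^h$, Young's inequality, and telescoping. Your observation that the tested inertial term becomes $(\mathcal{D}\mb{m}_h^{n+1},\mb{m}_h^{n+1})_{\Omega}$ with $\mb{m}_h=\sigma_h\mb{u}_h$, so that the dissipation term is really $\norm{\Lambda(\sigma_h^{j}\mb{u}_h^{j})}_{0,\Omega}^2$, matches what the paper's proof actually derives.
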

\begin{proof}
	We will require the following algebraic relation: for any real positive numbers $a^{n+1}$, $a^n$, $a^{n-1}$ and 
	defining $\Lambda a^n \coloneqq  a^{n+1} - 2a^n + a^{n-1}$, we have
	\begin{align} \label{eq:alg_id} 
		2(3a^{n+1} - 4a^n + a^{n-1}, a^{n+1}) &= \abs{a^{n+1}}^2 + \abs{2a^{n+1}-a^n}^2 + \abs{\Lambda a^n}^2 
		- \abs{a^{n}}^2 - \abs{2a^n-a^{n-1}}^2. 
	\end{align}
	First we take $\zeta_h = 4\rho_h^{n+1}$ in the first equation of \eqref{eq:full_disc_scheme}, multiply by $\Dt$ and apply \eqref{eq:alg_id} and \eqref{eq:crhoeq0}  to deduce the estimate
	\begin{align*}
		& \norm{\rho_h^{n+1}}^2_{0,\Omega} + \norm{2\rho^{n+1}_h - \rho_h^{n}}^2_{0,\Omega} + \norm{\Lambda \rho_h^{n}}^2_{0,\Omega} + 4\Dt \snorm{\rho^{n+1}_h}^2_{\mb{u}^{n+1}_h,\upw}  \leq \norm{\rho_h^{n}}^2_{0,\Omega} + \norm{2\rho_h^{n} - \rho_h^{n-1}}^2_{0,\Omega}.
	\end{align*}
	Summing over $n$ we can assert that
		\begin{align*} \begin{split} 
			&	\norm{\rho_h^{n+1}}^2_{0,\Omega} + \norm{2\rho^{n+1}_h - \rho_h^{n}}^2_{0,\Omega} + \sum_{j=1}^{n} \norm{\Lambda \rho_h^{j}}^2_{0,\Omega} + \sum_{j=1}^{n} \Dt \snorm{\rho^{j+1}_h}^2_{\mb{u}_h^{j+1},\upw}  \leq \norm{\rho_h^{1}}^2_{0,\Omega} + \norm{2\rho_h^{1} - \rho_h^{0}}^2_{0,\Omega}.
		\end{split} 
	\end{align*}
	
	Similarly in the second and third equation of \eqref{eq:full_disc_scheme}, we take $\mb{v}_h = 4\mb{u}_h^{n+1}$ and $q_h = 4p_h^{n+1}$, respectively, multiply by $\Dt$ and apply \eqref{eq:alg_id}, \eqref{eq:coerciv_ah} and \eqref{eq:cueq0} to deduce the estimate
	\begin{align*}
		&	\norm{\sigma_h^{n+1}\mb{u}_h^{n+1}}^2_{0,\Omega} + \norm{2\sigma_h^{n+1}\mb{u}^{n+1}_h - \sigma_h^{n}\mb{u}_h^n}^2_{0,\Omega} + \norm{\Lambda \sigma_h^{n}\mb{u}_h^n}^2_{0,\Omega} + 4 \Dt \tilde{\alpha}_a \norm{\mb{u}^{n+1}_h}^2_{\Thnorm} + { 4 \Dt \snorm{\mb{u}_h^{n+1}}^2_{\mb{u}_h^{n+1},\mathrm{upw}}}
		\\ & \quad 
		\leq 4 \Dt\norm{\mb{f}^{n+1}}_{0,\Omega} \norm{\mb{u}_h^{n+1}}_{0,\Omega} + \norm{\mb{u}_h^n}^2_{0,\Omega} + \norm{2\mb{u}_h^n - \mb{u}_h^{n-1}}^2_{0,\Omega}.
	\end{align*}
	Using Young's inequality with $\varepsilon = \tilde{\alpha}_a/2$ and summing over $n$ we can assert that
	\begin{align*}   \begin{split} 
			&\norm{\sigma_h^{n+1}\mb{u}_h^{n+1}}^2_{0,\Omega} + \norm{2\sigma_h^{n+1}\mb{u}^{n+1}_h - \sigma_h^{n}\mb{u}_h^n}^2_{0,\Omega} + \sum_{j=1}^{n} \norm{\Lambda \sigma_h^{j}\mb{u}_h^j}^2_{0,\Omega} \\ &\qquad + 2 \tilde{\alpha}_a \sum_{j=1}^{n} \Dt \norm{\mb{u}^{j+1}_h}^2_{\Thnorm} + { \sum_{j=1}^{n}4 \Dt \snorm{\mb{u}_h^{j+1}}^2_{\rho_h^{n+1}\mb{u}_h^{j+1},\mathrm{upw}}} \\ &\qquad \leq  C\norm{\mb{f}}_{L^{\infty}(0,T,\mb{L}^2(\Omega))} + \norm{\sigma_h^1\mb{u}_h^1}^2_{0,\Omega} + \norm{2\sigma_h^1\mb{u}_h^1 - \sigma_h^{0}\mb{u}_h^{0}}^2_{0,\Omega}.  \end{split} 
	\end{align*}
\end{proof}
\begin{theorem}[Existence of discrete solutions] \label{th:disc-exist} 
	Problem 	\eqref{eq:full_disc_scheme} with initial data $(\rho_h^1, \mb{u}_h^1)$ and $(\rho_h^0,\mb{u}_h^0)$ admits  at least one solution 
	\begin{align*} 
		(\rho_h^{n+1},\mb{u}_h^{n+1}, p_h^{n+1}) \in \mathcal{W}_h\times\mb{V}_h \times \mathcal{Q}_h. \end{align*} 
\end{theorem}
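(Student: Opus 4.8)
The plan is to prove existence of a discrete solution at each time step $n+1$ by a fixed-point / topological degree argument, treating $(\rho_h^n, \mb{u}_h^n)$ and $(\rho_h^{n-1}, \mb{u}_h^{n-1})$ as given data. Since the reduced problem lives in a finite-dimensional space, the natural tool is a corollary of Brouwer's fixed-point theorem (see, e.g., \cite[Chapter IV, Corollary 1.1]{Girault2005}): if $\Phi:\mb{X}\to\mb{X}$ is a continuous map on a finite-dimensional inner-product space and there exists $R>0$ such that $\langle \Phi(\mb{x}),\mb{x}\rangle > 0$ for all $\mb{x}$ with $\norm{\mb{x}}=R$, then $\Phi$ has a zero in the ball of radius $R$.

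First I would reduce the saddle-point system to a problem posed on the discrete kernel $\mb{K}_h$. Using the discrete inf-sup condition for $b(\cdot,\cdot)$, the pressure $p_h^{n+1}$ is determined once the velocity is known, so it suffices to find $(\rho_h^{n+1},\mb{u}_h^{n+1})\in \mathcal{W}_h\times\mb{K}_h$ solving the first two equations of \eqref{eq:full_disc_scheme} with $\mb{v}_h$ restricted to $\mb{K}_h$. I would then define the finite-dimensional product space $\mb{X}\coloneqq \mathcal{W}_h\times\mb{K}_h$ equipped with the inner product induced by $(\cdot,\cdot)_\Omega$ on the density component and $\norm{\cdot}_{\Thnorm}$ on the velocity component, and assemble the map $\Phi$ whose vanishing is exactly the coupled system, namely $\langle\Phi(\rho_h,\mb{u}_h),(\zeta_h,\mb{v}_h)\rangle$ equals the sum of the two residuals obtained by moving all terms of \eqref{eq:full_disc_scheme} to one side. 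Continuity of $\Phi$ follows from the continuity of the forms $a_2^h$, $c_1^h$, $c_2^h$ established in Lemma \ref{lemma4:ab_bounds} together with Lemma \ref{lem:prop_mu}, which guarantees that the Huber-regularized nonlinearity $\mu(\abs{\mathbf{D}_h\mb{u}_h}_\gamma)\mathbf{D}_h\mb{u}_h$ is Lipschitz in $\mb{u}_h$.

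The heart of the argument is the coercivity estimate $\langle\Phi(\rho_h,\mb{u}_h),(\rho_h,\mb{u}_h)\rangle>0$ on a sufficiently large sphere. Testing with $(\rho_h,\mb{u}_h)$ itself, the convective contributions $c_1^h$ and $c_2^h$ are nonnegative by \eqref{eq:crhoeq0}--\eqref{eq:cueq0}, the diffusion term is controlled below using the coercivity \eqref{eq:coerciv_ah} of $a_2^h$, and the BDF2 mass terms are handled through the algebraic identity \eqref{eq:alg_id} exactly as in the stability theorem, which produces the positive quadratic quantities $\norm{\rho_h}^2_{0,\Omega}$ and $\norm{\sigma_h\mb{u}_h}^2_{0,\Omega}$ while the contributions from the known previous time levels $(\rho_h^n,\mb{u}_h^n,\rho_h^{n-1},\mb{u}_h^{n-1})$ and the source $\mb{f}^{n+1}$ appear as fixed lower-order data. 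Absorbing these data terms with Young's inequality shows that $\langle\Phi(\rho_h,\mb{u}_h),(\rho_h,\mb{u}_h)\rangle$ is dominated from below by $\tilde\alpha_a\norm{\mb{u}_h}^2_{\Thnorm}+c\norm{\rho_h}^2_{0,\Omega}$ minus a constant depending only on the data, hence is strictly positive once $\norm{(\rho_h,\mb{u}_h)}_{\mb{X}}=R$ for $R$ large enough.

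I expect the main obstacle to be the treatment of the density-weighted inertial term $(\sigma_h^{n+1}\mathcal{D}(\sigma_h^{n+1}\mb{u}_h^{n+1}),\mb{v}_h)_\Omega$, where $\sigma_h^{n+1}=\sqrt{\rho_h^{n+1}}$ couples the two unknowns nonlinearly and, strictly speaking, requires $\rho_h^{n+1}\ge 0$ for the square root to be well defined. To keep $\Phi$ continuous on all of $\mb{X}$ one may either argue that the density equation propagates positivity at the discrete level, or replace $\sigma_h$ by $\sqrt{\max\{\rho_h,0\}}$ (a truncation that does not affect the energy identity since the relevant manipulations in \eqref{eq:dt-u} only use $\rho_h\ge0$) and recover positivity a posteriori from the transport structure. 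Once continuity and the coercivity bound are in place, Brouwer's theorem yields a zero of $\Phi$ in $\mb{K}_h\times\mathcal{W}_h$, and the inf-sup condition recovers a pressure $p_h^{n+1}\in\mathcal{Q}_h$, completing the existence proof; uniqueness is not claimed, consistent with the nonlinearity of the problem.
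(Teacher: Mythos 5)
Your proposal is correct and follows the same overall strategy as the paper: both arguments fix the previous time levels, assemble the residual map $\Phi$ for \eqref{eq:full_disc_scheme}, test with the unknown itself, use the nonnegativity of the upwind forms \eqref{eq:crhoeq0}--\eqref{eq:cueq0} and the coercivity \eqref{eq:coerciv_ah} of $a_2^h$, bound the contributions of the known levels via the a priori estimates \eqref{eq:utcs_bounds}, and invoke the Brouwer-type corollary (Theorem~\ref{th:brouwer}). There are two points where you deviate, both in the direction of more care. First, you reduce to the discrete kernel $\mb{K}_h$ and recover the pressure a posteriori from the discrete inf-sup condition, whereas the paper keeps $\mathcal{Q}_h$ inside the fixed-point space and relies on the cancellation $b(p_h^{n+1},\mb{u}_h^{n+1})-b(p_h^{n+1},\mb{u}_h^{n+1})=0$; since the paper's lower bound $C_R\bigl(\norm{\rho_h^{n+1}}_{0,\Omega}^2+\norm{\mb{u}_h^{n+1}}_{0,\Omega}^2\bigr)-C_r\bigl(\norm{\rho_h^{n+1}}_{0,\Omega}^2+\norm{\mb{u}_h^{n+1}}_{0,\Omega}^2\bigr)^{1/2}$ does not see the pressure component, the sign condition on a sphere of the full product space is not actually guaranteed when the radius is carried mostly by $p_h^{n+1}$, so your kernel reduction is the cleaner way to make the sphere argument airtight (at the cost of one extra inf-sup step to reconstruct $p_h^{n+1}$). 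Second, you explicitly address the well-definedness and continuity of the map through $\sigma_h^{n+1}=\sqrt{\rho_h^{n+1}}$, proposing a truncation $\sqrt{\max\{\rho_h,0\}}$; the paper simply asserts that $\Phi$ is well defined and continuous, so your remark fills a genuine (if routine) gap rather than contradicting anything. Two small technical notes: the corollary of Brouwer used here only requires $(\Phi(u),u)_H\geq 0$ on the sphere, not strict positivity, and the algebraic identity \eqref{eq:alg_id} is used by the paper in the stability theorem rather than in the existence proof itself, where a direct Cauchy--Schwarz bound on the terms $(4\rho_h^n-\rho_h^{n-1},\rho_h^{n+1})_{\Omega}$ and its velocity analogue suffices; either route works.
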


{The proof of Theorem~\ref{th:disc-exist} makes use of Brouwer's fixed-point} theorem in the following form 
(given by \cite[Corollary 1.1, Chapter IV]{girault1986}): 

\begin{theorem}[Brouwer's fixed-point theorem] \label{th:brouwer}  Let $H$ be a finite-dimensional Hilbert space with scalar product  $(\cdot,\cdot )_H$ and corresponding norm $\norm{\cdot}_H$. Let $\Phi\colon H \to H$ be a continuous mapping  for which there exists $\vartheta > 0$ such that $(\Phi(u),u)_H \geq 0$ for all $u \in H$  with $\norm{u}_H  = \vartheta$. 		Then  there exists $u \in H$ such that $\Phi(u) = 0$ and $\norm{u}_H \leq \vartheta$. \end{theorem}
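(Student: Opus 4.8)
The plan is to derive this statement as a corollary of the classical Brouwer fixed-point theorem---namely, that every continuous self-map of a closed ball in a finite-dimensional normed space has a fixed point---via a standard contradiction argument. Since $H$ is finite-dimensional, I may identify it isometrically with $\Real^N$ for $N = \dim H$, so that the closed ball $\overline{B} \coloneqq \{u \in H : \norm{u}_H \leq \vartheta\}$ is homeomorphic to the standard closed Euclidean ball, which is exactly the setting in which Brouwer's theorem applies.

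First I would argue by contradiction: suppose $\Phi(u) \neq 0$ for every $u \in \overline{B}$. Then the map $G\colon \overline{B} \to H$ defined by $G(u) \coloneqq -\vartheta\, \Phi(u)/\norm{\Phi(u)}_H$ is well defined (the denominator never vanishes by assumption) and continuous on $\overline{B}$, being a composition of the continuous map $\Phi$ with the continuous normalisation $v \mapsto -\vartheta\, v/\norm{v}_H$ on $H \setminus \{0\}$. Moreover $\norm{G(u)}_H = \vartheta$ for all $u \in \overline{B}$, so in particular $G$ maps $\overline{B}$ into itself. By Brouwer's theorem, $G$ admits a fixed point $u^\ast \in \overline{B}$, i.e.\ $G(u^\ast) = u^\ast$.

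Next I would extract the contradiction from this fixed point. Taking norms in $u^\ast = G(u^\ast)$ immediately gives $\norm{u^\ast}_H = \vartheta$, so $u^\ast$ lies on the sphere where the hypothesis $(\Phi(u),u)_H \geq 0$ is assumed to hold. On the other hand, substituting $u^\ast = -\vartheta\,\Phi(u^\ast)/\norm{\Phi(u^\ast)}_H$ into the inner product yields
\begin{equation*}
	(\Phi(u^\ast), u^\ast)_H = -\frac{\vartheta}{\norm{\Phi(u^\ast)}_H}\,\bigl(\Phi(u^\ast),\Phi(u^\ast)\bigr)_H = -\vartheta\,\norm{\Phi(u^\ast)}_H < 0,
\end{equation*}
the strict inequality following from $\Phi(u^\ast) \neq 0$. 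This contradicts the sign hypothesis evaluated at $u^\ast$. Hence the assumption fails, and there must exist $u \in \overline{B}$ with $\Phi(u) = 0$ and $\norm{u}_H \leq \vartheta$, as claimed.

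I expect the only genuine subtlety to be the invocation of Brouwer's theorem itself, which I treat as the underlying black box; everything else---continuity and well-posedness of $G$, the norm identity $\norm{G(u)}_H = \vartheta$, and the inner-product computation---is elementary. The role of finite-dimensionality is precisely to render $\overline{B}$ compact and homeomorphic to a Euclidean ball so that Brouwer applies; in infinite dimensions the same statement would instead require a Schauder-type compactness argument on the image of $\Phi$.
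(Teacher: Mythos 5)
Your proof is correct. Note, however, that the paper does not prove this statement at all: it is quoted verbatim as a known result, cited from Girault and Raviart (Corollary 1.1, Chapter IV of \cite{girault1986}), and is then used as a tool in the proof of Theorem \ref{th:disc-exist}. Your contradiction argument --- assuming $\Phi$ never vanishes on the closed ball, normalising to $G(u) = -\vartheta\,\Phi(u)/\norm{\Phi(u)}_H$, invoking the classical Brouwer fixed-point theorem on the ball, and observing that any fixed point $u^\ast$ lies on the sphere yet satisfies $(\Phi(u^\ast),u^\ast)_H = -\vartheta\norm{\Phi(u^\ast)}_H < 0$ --- is precisely the standard textbook derivation of this corollary (it is essentially the argument in the cited reference, and also appears in Temam's Navier--Stokes monograph), so there is no gap and nothing further to reconcile with the paper.
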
 

\begin{proof}[Proof of Theorem~\ref{th:disc-exist}]
	To simplify the proof we introduce the  constants 
	\begin{gather*} 
		C_{\rho}  \coloneqq  C_1 \bigl( \norm{\rho_h^{1}}^2_{0,\Omega} + \norm{2\rho_h^{1} - \rho_h^{0}}^2_{0,\Omega} \bigr), \\
		C_{u}  \coloneqq C_2 \bigl(\norm{\mb{f}}^2_{L^{\infty}(0,T,\mb{L}^2(\Omega))} + \norm{\sigma_h^{1}\mb{u}_h^1}^2_{0,\Omega} + \norm{2\sigma_h^{1}\mb{u}_h^1 - \sigma_h^{0}\mb{u}_h^0}^2_{0,\Omega}  \bigr).
	\end{gather*}
	We proceed by induction on $n \geq 2$. We define the mapping 
	\begin{equation}\label{eq:fixed} 
		\Phi : \mathcal{W}_h\times\mb{V}_h \times \mathcal{Q}_h \to \mathcal{W}_h\times\mb{V}_h \times \mathcal{Q}_h,  
	\end{equation} 
	using the relation
	\begin{align*}
		&\bigl( \Phi(\rho_h^{n+1},\mb{u}_h^{n+1}, p_h^{n+1}),(\zeta_h,\mb{v}_h, q_h) \bigr)_{\Omega} \\
		&\hspace{1cm} = \frac{1}{2 \Dt} \bigl(\mathcal{D} \rho_h^{n+1}, \zeta_h \bigr)_{\Omega} + c_1^h(\mb{u}_h^{n+1}, \rho_h^{n+1}, \zeta_h) + \\
		&\hspace{1.5cm} \frac{1}{2\Dt} \bigl( \sigma_h^{n+1} \mathcal{D}_h (\sigma_h^{n+1} \mb{u}_h^{n+1}\bigr), \mb{v}_h)_{\Omega} + a^h_2(\mb{u}_h^{n+1},\mb{v}_h)   \\ 
		&\hspace{1.5cm}+ c^h_2(\rho_h^{n+1}\mb{u}_h^{n+1};\mb{u}_h^{n+1},\mb{v}_h)
		+ b(p_h^{n+1},\mb{v}_h) - (\mb{f}^{n+1},\mb{v}_h)_{\Omega} \\
		&\hspace{1.5cm}- b(q_h,\mb{u}_h^{n+1}).
	\end{align*}
	Note that this map is well-defined and continuous on $\mathcal{W}_h\times\mb{V}_h \times \mathcal{Q}_h$. On the other hand, if we take 
	$$(\zeta_h,\mb{v}_h, q_h)=\left(\rho_h^{n+1},\mb{u}_h^{n+1}, p_h^{n+1}\right),$$ 
	and employ \eqref{eq:crhoeq0}, \eqref{eq:cueq0}, and \eqref{eq:coerciv_ah}, we obtain
	\begin{align*}
		&\bigl( \Phi(\rho_h^{n+1},\mb{u}_h^{n+1}, p_h^{n+1}),(\rho_h^{n+1},\mb{u}_h^{n+1}, p_h^{n+1}) \bigr)_{\Omega}\\
		& \geq \frac{3}{2\Dt} \norm{\rho_h^{n+1}}^2_{0,\Omega}-\frac{1}{2\Dt}(4\rho_h^n-\rho_h^{n-1},\rho_h^{n+1})_{\Omega}+\snorm{\rho_h^{n+1}}_{\mb{u}_h^{n+1},\mathrm{upw}}
		\\&+\frac{3}{2\Dt}\norm{\mb{u}_h^{n+1}}_{0,\Omega}^2-\frac{1}{2\Dt} (4\mb{u}_h^n-\mb{u}_h^{n-1},\mb{u}_h^{n+1})_{\Omega} + \tilde{\alpha}_a\norm{\mb{u}_h^{n+1}}^2_{\Thnorm}\\ &+{\snorm{\mb{u}_h^{n+1}}_{\mb{u}_h^{n+1},\mathrm{upw}}^2}  - (\mb{f}^{n+1},\mb{u}_h^{n+1})_{\Omega}.
	\end{align*}
	Next,   using \eqref{eq:utcs_bounds} and Cauchy-Schwarz inequality, we deduce that
	\begin{align*}
		&\bigl( \Phi(\rho_h^{n+1},\mb{u}_h^{n+1}, p_h^{n+1}),(\rho_h^{n+1},\mb{u}_h^{n+1}, p_h^{n+1}) \bigr)_{\Omega} \\
		&  \geq \frac{3}{2\Dt} \norm{\rho_h^{n+1}}^2_{0,\Omega}-\frac{5}{2\Dt}C_{\rho}\norm{\rho_h^{n+1}}_{0,\Omega}
		\\& \tilde{\alpha}_a\norm{\mb{u}_h^{n+1}}^2_{0,\Omega}  - \norm{\mb{f}^{n+1}}_{0,\Omega}\norm{\mb{u}_h^{n+1}}_{0,\Omega}.
	\end{align*}
	Then, setting 
	\begin{align*} 
		C_R = \min\left \{ \frac{3}{2\Dt}, \tilde{\alpha}_a \right\}\mbox{  and  }  C_r = \sqrt{2} \max \left\{ \frac{5}{2\Dt} C_{\rho}, \norm{\mb{f}}_{L^{\infty}(0,T,\mb{L}^2(\Omega))} \right\},
	\end{align*}  
	we may apply the inequality $a+b \leq \sqrt{2} (a^2+b^2)^{1/2}$, valid for all $a,b\in \mathbb{R}$, to obtain
	\begin{align*}
		&	\bigl( \Phi(\rho_h^{n+1},\mb{u}_h^{n+1}, p_h^{n+1}),(\rho_h^{n+1},\mb{u}_h^{n+1}, p_h^{n+1}) \bigr)_{\Omega} \\ &\quad \geq  
		C_{R} \bigl(\norm{\rho_h^{n+1}}_{0,\Omega}^2 +  \norm{\mb{u}_h^{n+1}}_{0,\Omega}^2 \bigr)   
		- C_{r} \bigl(\norm{\rho_h^{n+1}}_{0,\Omega}^2 + \norm{\mb{u}_h^{n+1}}_{0,\Omega}^2 \bigr)^{1/2}.
	\end{align*} 
	Hence, the right-hand side is nonnegative on a sphere of radius $r  \coloneqq   C_r / C_R$. Consequently, by  
	Theorem~\ref{th:brouwer}, there exists a solution to the fixed-point problem 
	$\Phi  (\rho_h^{n+1},\mb{u}_h^{n+1}, p_h^{n+1})=0$, {where the fixed-point map \eqref{eq:fixed} is the solution operator for the fully discrete problem \eqref{eq:full_disc_scheme}.} 
\end{proof}

Note that, even when uniqueness of the discrete scheme remains an open
problem, our non-exhaustive selection of numerical examples did not present any
difficulties in this regard.

\subsection{Semismooth Newton Linearization and multiplier approach}

At each time iteration, we are left with a nonlinear system, which involves the non-differentiable function associated with the Huber regularization $|\cdot|_\gamma$. This fact prevents us from proposing a Newton iteration to solve such a system.  Despite this drawback, our goal remains to have a fast-converging method to solve this system. Thus, we propose a semismooth Newton (SSN) iteration, which uses either Newton or slantly differentiation. For the sake of readability of the paper, we provide the definition of slantly differentiation.
\begin{defi}
Let $X$ and $Y$ be two Banach spaces, and let $D\subset X$ be an open domain. A function $F:D\subset X\rightarrow Y$ is said to be slantly differentiable at $x\in D$ if there exists a mapping $G_F:D\rightarrow\mathcal{L}(X,Y)$ such that the family $\{G_F(x+h)\}$ of bounded linear operators is uniformly bounded in the operator norm for $h$ sufficiently small and
\[
\underset{h\rightarrow 0}{\lim}\frac{F(x+h)-F(x)- G_F(x+h) h}{\|h\|}=0.
\]
\end{defi}
The use of this differentiation concept is justified since it is well known that both the max function and the Frobenius norm are slantly differentiable in finite-dimensional spaces (see \cite{JC2012,Gon1,GonMen} and references therein). Furthermore, the SSN approach has been shown to be efficient and provides a linearization scheme that exhibits superlinear convergence when applied to discretized viscoplastic models, as discussed in the aforementioned literature. 

We also introduce a multiplier approach with an auxiliary tensor $\mb{z}$ such that
$|\mb{D}\mb{u}|_{\gamma} \mb{z}=\gamma \tau_s \mb{D}\mb{u}$. The strategy is a particularly efficient numerical technique for solving viscoplastic flow problems in which the nonlinearity is related to the unknown velocity gradient. Moreover, the new formulation is equivalent to the original problem in the continuous case (this can be proven using the same techniques as in \cite[Proposition 3.7]{GonMen}). In the discrete case, we take $\mb{z}_h \in  \mb{W}_{h}$, where
\[\mb{W}_h \coloneqq \left\{ \mb{w} \in \mathbb{L}^{2}(\Omega) \,:\, \mb{w}|_{K} \in [\mathbb{P}^{k-1}(K)]^{d\times d}, \quad \forall K \in \mathcal{T}_h \right\},\]
 and add the following equation:
\begin{equation} \label{eq:tensor_z}
(\gamma \tau_s \mathbf{D}_h \mb{u}_h^{n+1}, \mb{w}_h)_{\Omega} - (\abs{\mathbf{D}_h \mb{u}_h^{n+1}}_{\gamma} \mb{z}_h^{n+1},\mb{w}_h)_{\Omega} =0.\end{equation}
	
	Note that if we take $\mb{w}_h = \frac{\mathbf{D}_h \mb{u}_h^{n+1}}{|\mathbf{D}_h \mb{u}_h^{n+1}|_{\gamma}}$ (which is possible in the case $k=1$ that we use for our numerical tests, since $\mb{u}_h^{n+1} \in \mb{H}(\Div;\Omega)$ and $\mathbf{D}_h \mb{u}_h^{n+1}|_K \in [\mathbb{P}^{0}(K)]^{d\times d}$), we can deduce the inequality,
	\begin{align*}
		0 \leq \tau_s \gamma \int_{\Omega}{\frac{\abs{\mathbf{D}_h \mb{u}_h^{n+1}}^2}{\abs{\mathbf{D}_h \mb{u}_h^{n+1}}_{\gamma}}}\dx = (\mb{z}_h^{n+1},\mathbf{D}_h \mb{u}_h^{n+1})_{\Omega}.
	\end{align*}
	which in turn allow us to maintain our stability and existence results. The additional tensor is particular useful to improve the SSN convergence for large Reynolds number simulations.
	
Given the discussion above,  the semismooth Newton linearization for system  \eqref{eq:full_disc_scheme}, including \eqref{eq:tensor_z}, about $(\rho_h^{n+1},\mb{u}^{n+1}_h,p_h^{n+1},\mb{z}^{n+1}_h)$, gives the following problem: find $\delta_{\rho} \in \mathcal{W}_h$, $\delta_{\mb{u}}\in \mb{V}_h$, $\delta_p\in \mathcal{Q}_h$, $\delta_{\mb{z}} \in \mb{W}_h$, such that, for all $\zeta_h \in \mathcal{W}_h$, $\mb{v}_h \in \mb{V}_h$, $q_h\in \mathcal{Q}_h$ and $\mb{w}_h\in\mb{W}_h$, it holds that

\begin{subequations}\label{SSN}
\begin{equation}\label{SSN1}
	\begin{array}{lll}
\frac{3}{2\Dt} \int_\Omega \delta_{\rho}\zeta_h\dx + c_{1,\delta}^h(\mb{u}_h^{n+1},\delta_{\mb{u}}, \rho_h^{n+1},\delta_{\rho}, \zeta_h)=-\frac{1}{2 \Dt} \int_\Omega\bigl(\mathcal{D} \rho_h^{n+1} \zeta_h \bigr)\dx - c_1^h(\mb{u}_h^{n+1}, \rho_h^{n+1}, \zeta_h) 
	\end{array}\vspace{0.2cm}
\end{equation}

\begin{equation}\label{SSN2}
	\begin{array}{lll}
	\frac{3}{2\Dt} \int_\Omega\bigl( (\rho_h^{n+1} \delta_{\mb{u}} + \delta_\rho \mb{u}_h^{n+1} \bigr)\cdot\mb{v}_h)\dx + \tilde{a}^h_{2}(\delta_{\mb{z}};\delta_{\mb{u}},\mb{v}_h) + b(\delta_p,\mb{v}_h)\vspace{0.2cm} \\
	\hspace{0.5cm}-\int_\Omega\frac{\tau_s\chi_{\mathcal{A}_\gamma}}{\abs{\djump{\mathbf{D}_h\mb{u}^{n+1}_h}}^3} \left(\djump{\mathbf{D}_h\mb{u}^{n+1}_h}\,:\,\djump{\mathbf{D}_h\delta_{\mb{u}}}\right)\,(\dmean{\mb{D}_h\mb{v}_h}\mb{n}_e\,:\,\djump{\mb{D}_h\mathbf{u}_h})\,dx \vspace{0.2cm}
	\\\hspace{0.5cm}
+ c^h_{2,\delta}(\rho_h^{n+1},\delta{\rho},\mb{u}_h^{n+1},\delta_{\mb{u}};\mb{u}_h^{n+1},\delta_{\mb{u}},\mb{v}_h)  
=-\frac{1}{2\Dt} \int_\Omega\bigl( \sigma_h^{n+1} \mathcal{D} (\sigma_h^{n+1} \mb{u}_h^{n+1}\bigr)\cdot\mb{v}_h)\dx\vspace{0.2cm} \\\hspace{3.5cm} - \tilde{a}^h_2(\mb{z}_h^{n+1};\mb{u}_h^{n+1},\mb{v}_h) - c^h_2(\rho_h^{n+1}\mb{u}_h^{n+1};\mb{u}_h^{n+1},\mb{v}_h)  \vspace{0.2cm} \\\hspace{5.5cm}- b(p_h^{n+1},\mb{v}_h)+\int_\Omega \mb{f}^{n+1}\cdot\mb{v}_h\dx,
	\end{array}\vspace{0.3cm}
\end{equation}

\begin{equation}\label{SSN3}
b(q_h,\delta_{\mb{u}})=-b(q_h,\mb{u}_h^{n+1}),
\end{equation}

\begin{equation}\label{SSN4}
	\begin{array}{lll}
		\gamma\tau_s\int_\Omega \mathbf{D}_h\delta_{\mb{u}}\,:\mathbf{w}_h\,dx -\gamma\int_\Omega\frac{\chi_{\mathcal{A}_\gamma}}{\abs{\mathbf{D}_h\mb{u}^{n+1}_h}} \left(\mathbf{D}_h\mb{u}^{n+1}_h\,:\,\mathbf{D}_h\delta_{\mb{u}}\right)\,(\mathbf{z}^{n+1}_h\,:\,\mathbf{w}_h)\,dx \vspace{0.2cm}\\\hspace{0cm}  -\int_\Omega |\mathbf{D}_h\mb{u}^{n+1}_h|_\gamma\,\delta_{\mathbf{z}}\,:\,\mathbf{w}_h\,dx= -\gamma\tau_s \int_\Omega \mathbf{D}_h\mb{u}^{n+1}_h\,:\,\mathbf{w}_h\,dx + \int_\Omega \abs{\mathbf{D}_h\mb{u}^{n+1}_h}_\gamma \mathbf{z}^{n+1}_h\,:\,\mathbf{w}_h\,dx,
	\end{array}\vspace{0.2cm}
\end{equation}
\end{subequations}

where 
\begin{equation}
\begin{split}
\tilde{a}^h_2(\mb{z}_h;\mb{u}_h, \mb{v}_h)  
	& \coloneqq \int_{\Omega} \bigl(\nu\mathbf{D}_h\mb{u}_h+\mb{z}_h:\mathbf{D}_h\mb{v}_h \bigr)\dx   \label{eq:a2hdef2}\\
	&  \quad + \sum_{e\in\mathcal{E}_h} \int_e \biggl( -\dmean{(\nu\mathbf{D}_h(\mb{u}_h)+\mb{z}_h)\mb{n}_e} \cdot \djump{\mb{v}_h }  \\
	& - \dmean{\mu(\abs{\djump{\mathbf{D}_h \mb{u}_h}}_{\gamma}) \mathbf{D}_h(\mb{v}_h)\mb{n}_e}\cdot\djump{\mb{u}_h } + \frac{a_0}{h_e} \djump{\mb{u}_h } : \djump{\mb{v}_h } \biggr) \ds.
\end{split}
\end{equation}

Let us discuss the equation \eqref{SSN4}, associated with the Huber term $|\cdot|_\gamma$. Here, we have that
\[
\chi_{\mathcal{A}_\gamma}:=\left\{
\begin{array}{lll}
	1,& \mbox{if $\abs{\mathbf{D}_h \mb{u}_h}\geq \frac{\tau_s}{\gamma}$}\vspace{0.2cm}\\0,&\mbox{otherwise}.
\end{array}
\right. 
\]
This function stands for the slantly derivative of the $\max$ term in $\abs{\cdot}_\gamma$, and gives us a good estimator of the approximated yielded and unyielded regions in the material, respectively \cite{JC2012}.  The regions in which $\chi_{\mathcal{A}_\gamma}=1$ are the active sets in the smoothing step and corresponds to the Huber approximations of the yielded regions. Respectively, the regions where $\chi_{\mathcal{A}_\gamma}=0$, are the inactive sets and correspond to the Huber approximations of the unyielded regions. 

Next, let us focus on the convective forms $c_1^h(\mathbf{u}_h, \rho_h,\zeta_h)$ and $c_2^h(\rho_h\mathbf{w}_h,\mathbf{u}_h,\mathbf{v}_h)$. These forms are well posed due to the dG formulation and the analysis done in Section \ref{sec:stab}.  In consequence,  they are differentiable with derivatives given by  $c_{1,\delta}^h(\mathbf{u}_h, \rho_h,\zeta_h)$ and $c_{2,\delta}^h(\rho_h\mathbf{w}_h,\mathbf{u}_h,\mathbf{v}_h)$

\begin{equation*}
\begin{array}{lll}	
	c^h_{1,\delta} (\mb{u}_h,\delta_{\mb{u}}; \rho_h,\delta_{\rho}, \zeta_h) &:=& \int_\Omega (\mb{u}_h \cdot \nabla \delta_{\rho} +\delta_{\mb{u}}\cdot \nabla \rho_h)\zeta_h \dx \vspace{0.2cm}\\&&-\sum_{e\in\mathcal{E}_h} \int_e ((\delta_{\mb{u}}\cdot \mb{n}_e) \djump{\rho_h}+(\mb{u}_h\cdot \mb{n}_e) \djump{\delta_\rho})\dmean{\zeta_h}\ds\vspace{0.2cm} \\&&+ \sum_{e\in\mathcal{E}_h} \int_e \frac{1}{2}\frac{\mb{u}_h\cdot \mb{n}_e}{\abs{\mb{u}_h\cdot \mb{n}_e}} (\delta_{\mb{u}}\cdot\mb{n}_e)\djump{\rho_h}\cdot\djump{\zeta_h}\ds\vspace{0.2cm}\\&&+ \sum_{e\in\mathcal{E}_h} \int_e \frac{1}{2}\abs{\mb{u}_h\cdot \mb{n}_e} \djump{\delta_{\rho}}\cdot\djump{\zeta_h}\ds
\end{array}
\end{equation*}
and
\begin{equation*}
\begin{array}{lll}	
c^h_{2,\delta} (\rho_h,\delta_{\rho}, \mb{w}_h,\delta_{\mb{w}}; \mb{u}_h,\delta_{\mb{u}}, \mb{v}_h) := \int_\Omega ((\rho_h \delta_{\mb{w}}+\delta_{\rho} \mb{w}_h) \cdot \nabla_h)\mb{u}_h\cdot\mb{v}_h \dx + \int_\Omega (\rho_h \mb{w}_h \cdot \nabla_h)\delta_{\mb{u}}\cdot\mb{v}_h \dx\vspace{0.2cm} \\\hspace{2cm}
 +\frac{1}{2}\int_{\Omega}(\nabla \cdot(\delta_{\rho} \mb{w}_h+\rho_h \delta_{\mb{w}})\mb{u}_h + \nabla \cdot(\rho_h \mb{w}_h)\delta_{\mb{u}})\cdot \mb{v}_h \dx\vspace{0.2cm} \\\hspace{3cm} - \sum_{e\in\mathcal{E}_h} \int_e ((\delta_{\rho}\mb{w}_h+\rho_h\delta_{\mb{w}})\cdot \mb{n}_e) \djump{\mb{u}_h}+(\rho_h\mb{w}_h\cdot \mb{n}_e) \djump{\delta_{\mb{u}}})\cdot\dmean{\mb{v}_h}\ds \vspace{0.2cm} \\\hspace{3.5cm}
	+ \sum_{e\in\mathcal{E}_h} \int_e \left(\frac{1}{2}\abs{\rho_h\mb{w}_h\cdot \mb{n}_e} \djump{\delta_{\mb{u}}}+\frac{1}{2}\frac{\rho_h\mb{w}_h\cdot \mb{n}_e}{\abs{\rho_h\mb{w}_h\cdot \mb{n}_e}}(\delta_{\rho}\mb{w}_h+\rho_h\delta_{\mb{w}})\cdot\mb{n}_e \djump{\mb{u}_h}\right)\cdot\djump{\mb{v}_h}\ds.
\end{array}
\end{equation*}

Summarizing, we can conclude that system \eqref{SSN} is well-posed (see \cite{GonMen} for further details). Moreover, by following a similar analysis as the one in \cite{JC2012}, we can state that the SSN iteration converges superlinearly locally. This assertion will be computationally confirmed in the numerical experiments carried out in the next section. 

\section{Numerical results} \label{sec:Num-examples}

In this section, we test the performance of the numerical method on a set of quasi-uniform triangulations of the respective domain. The implementation of the $\mathbf{H}(\mathrm{div})$-conforming finite element scheme is carried out using the open source finite element library FEniCS \cite{alnaes2015} and polynomial degree $k=1$. The linear systems encountered at each Semismooth Newton step are solved with the multifrontal massively parallel sparse direct solver MUMPS. The Newton iterations terminate when either the absolute or the relative residuals (measured in the $\ell_2$-norm) fall below a fixed tolerance of $1\times10^{-5}$. 

\subsection{Constant Density}

 \begin{figure}[!t]
	\begin{center}
		\includegraphics[height=0.4\textwidth]{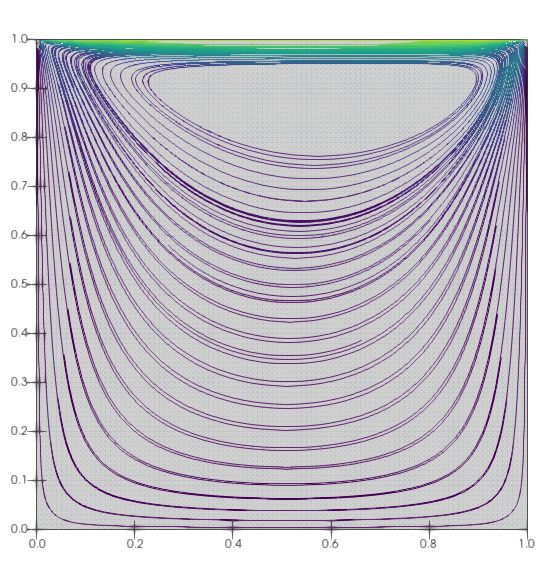}\includegraphics[height=0.4\textwidth]{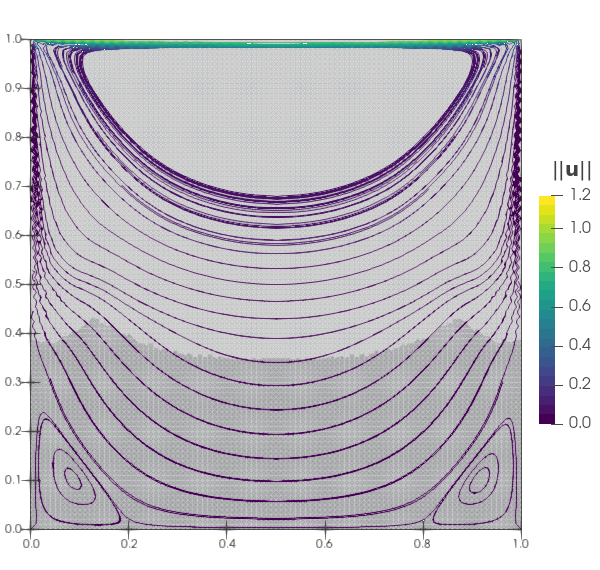}\\
		\includegraphics[height=0.4\textwidth]{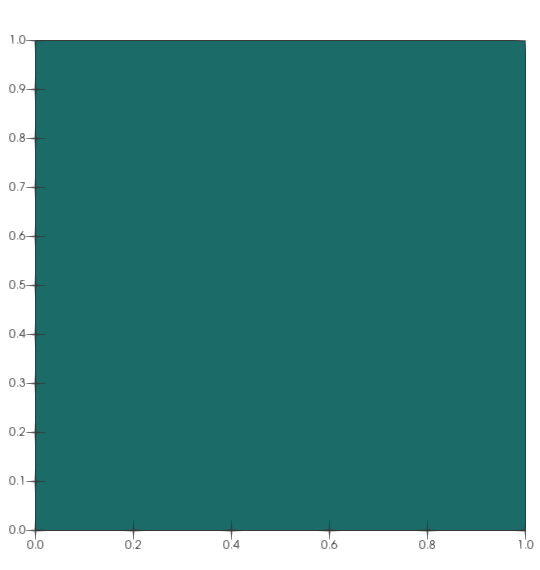}\includegraphics[height=0.4\textwidth]{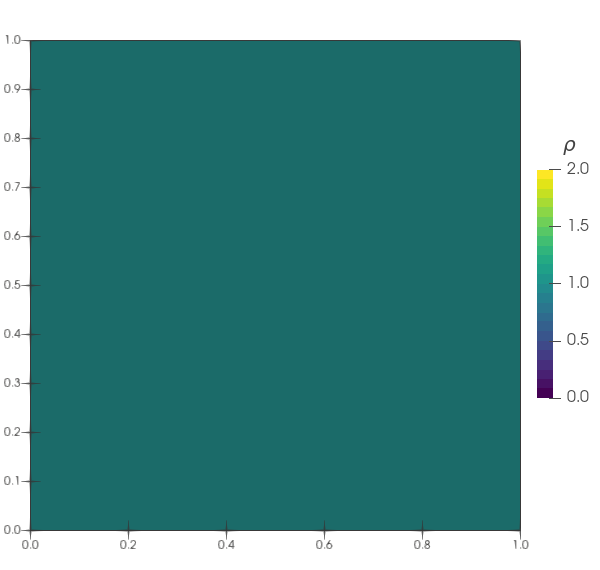}
	\end{center}
	\vspace{-3mm}
	\caption{lid-driven cavity: Velocity vector field stream lines/inactive sets ($\mathcal{I}_{\gamma}$, dark gray) and constant density for $\tau_s = 0.0 $ (left) and $\tau_s = 2.5$ (right), at time $t=0.5$. Parameters: $\mathrm{Re}=100$}\label{fig:ex0-1}  
\end{figure}

We start by testing a standard two-dimensional lid-driven cavity with constant density at Re=100, to verify that our method does not introduce spurious variations in density. As shown in Figure \ref{fig:ex0-1}, using a $100\times 100$ mesh, the method preserves constant density, and the velocity streamlines, as well as the active/inactive zones, are in good agreement with similar examples computed using other numerical schemes (see i.e \cite{Botti2021}, \cite{GonMen}, \cite{JC2012}, \cite{Narain2022}).

 \begin{figure}[!t]
	\begin{center}
		\includegraphics[height=0.4\textwidth]{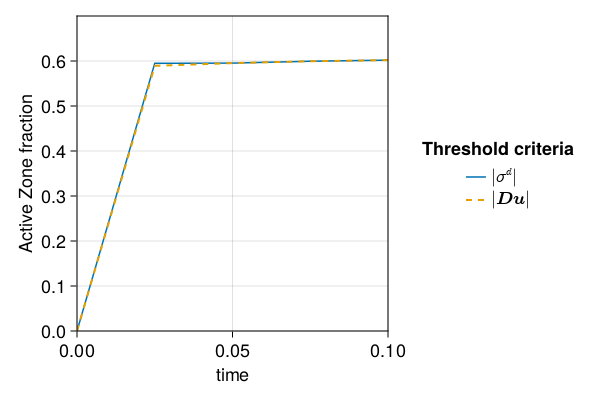}
	\end{center}
	\vspace{-3mm}
	\caption{lid-driven cavity: Fraction of cells corresponding to the active zone at different times, computed with different criteria for the threshold. Parameters: $\tau_s = 2.5$, $\gamma=\num{1e3}$, $\mathrm{Re}=100$.}\label{fig:th_crit}  
\end{figure}

Usually, there are two main criteria for approximating the yielding and unyielding zones numerically. One is to compute a threshold in terms of the norm of the deviatoric part of the stress tensor ($\bs{\sigma}^d$), known as the von Mises criterion. The other is to define the threshold in terms of the magnitude of the shear rate ($\mb{Du}$), as proposed in \cite{Saramito}. As depicted in Figure \ref{fig:th_crit}, the two criteria are highly consistent in our scheme, with only a small difference in the first Euler time iteration. Therefore, we define the active/inactive zones based on the magnitude of the shear rate and calculated using the approximation shown in equation \eqref{eq:zones_approx} for the remainder of this section.

 \begin{figure}[!t]
	\begin{center}
		\includegraphics[height=0.4\textwidth]{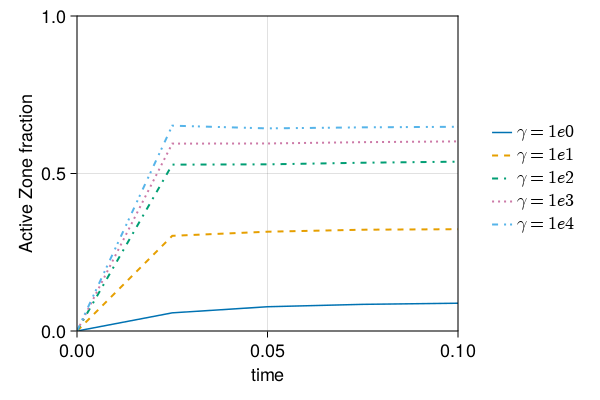}
	\end{center}
	\vspace{-3mm}
	\caption{lid-driven cavity: Fraction of cells corresponding to the active zone computed with different $\gamma$ values. Parameters: $\tau_s = 2.5$, $\mathrm{Re}=100$.}\label{fig:az_gamma}  
\end{figure}

We also test the impact of the regularization parameter $\gamma$ on the size of the active/inactive zone. In Figure \ref{fig:az_gamma}, we present the fraction of cells corresponding to the active zone at different times and using different values for $\gamma$. Note that the zone size largely changes for values below $\num{1e2}$. Theoretically, the approximation improves as $\gamma \to \infty$. However, the condition of the resultant matrix deteriorates as $\gamma$ increases. From now on, we fix $\gamma=\num{1e3}$ as a trade-off between these competing criteria.

\subsection{Analytical Solution}

	Only a few analytical solutions are available for viscoplastic fluid problems. One such solution is reported for the stationary Bingham fluid problem with constant density and velocity field $\mb{u} = (u_1,0)$. In two dimensions is given by
	\begin{align} \label{eq:an_sol}
		u_1 = \begin{cases}
			\frac{1}{8}[(1-2\tau_s^2)-(1-2\tau_s-2y)^2], &\text{if }0\leq y <\frac{1}{2}-\tau_s \\
			\frac{1}{8}(1-2\tau_s)^2, & \text{if }\frac{1}{2}-\tau_s \leq y \leq \frac{1}{2}+\tau_s \\
			\frac{1}{8}[(1-2\tau_s)^2-(2y-2\tau_s-1)^2], &\text{if }\frac{1}{2}+\tau_s < y \leq 1
		\end{cases}
	\end{align}
which corresponds to the flow between two parallel plates.

We use this simplified setting to test the ability to recover analytical solutions and check the convergence rates of the $\mathbf{H}(\mathrm{div})$-conforming discretization for the Bingham fluid problem. We consider $\Omega = ]0,1[\times]0,1[$, $\eta =1.0$, $\mb{f}=\mb{0}$, and Dirichlet boundary conditions are imposed on the domain according to \eqref{eq:an_sol}. Table \ref{tab:ex1_err} shows the numerical error in the discrete norms
\begin{align*}
\norm{\mb{u}}_{0,\Th} \coloneqq  \left( \sum_{n=1}^N \norm{\mb{u}_h^n}_{1,\mathcal{T}_h}^2\right)^{1 / 2}, \quad \text{and} \quad 
\norm{p}_{0,k} \coloneqq  \left(\sum_{n=1}^N \norm{p_h^n}_{k,\Omega}^2\right)^{1/ 2}. 
\end{align*}
The corresponding individual errors and convergence rates are computed as 
\begin{gather}
\texttt{e}_{\mb{u}} = \norm{\mb{u} - \mb{u}_h}_{0,\Th}, \quad \texttt{e}_{p} = \norm{p - p_h}_{0,0}, \nonumber\\ 
\texttt{rate} =\log(e_{(\cdot)}/\tilde{e}_{(\cdot)})[\log(h/\tilde{h})]^{-1}, \label{eq:error01}
\end{gather}
where $e,\tilde{e}$ denote errors generated on two
consecutive pairs of mesh size ~$h$, and~$\tilde{h}$, respectively.

\begin{table}[t]
	\setlength{\tabcolsep}{4pt}
	\renewcommand{\arraystretch}{1.3}
	\centering 
	{\small\begin{tabular}{|r|ccccc|}
		\hline 
		h & $\texttt{e}_{\mb{u}}$ & \texttt{rate} & $\texttt{e}_{p}$ & \texttt{rate} & $\norm{\mathrm{div}\mb{u}_h}_{\infty,\Omega}$\tabularnewline
		\hline
		\hline
		 0.5&1.0142&  --- &    8.5903 & --- & 2.7756e-17 \tabularnewline
		 0.25&0.4797 & 1.0802 & 3.9553 & 1.1189 &8.3267e-17 \tabularnewline
		 0.125&0.0979& 2.2929 & 0.9309 & 2.0871 &2.2205e-16 \tabularnewline
		 0.0625&0.0285& 1.7800 & 0.2588 & 1.8466&5.5511e-16 \tabularnewline
		 0.03125&0.0071& 1.9972& 0.0909& 1.5099&8.8818e-16 \tabularnewline
		\hline 
	\end{tabular} }
	\medskip\caption{Experimental errors and convergence rates for the approximate solutions 
		$\mb{u}_h$, $p_h$, {where the polynomial degree $k=1$ is used}. The $\ell^{\infty}$-norm of the vector formed 	by the divergence of the discrete velocity computed for each discretization is shown in the last column.} \label{tab:ex1_err}
\end{table}

Notice that the convergence rates are higher than what is theoretically expected for Navier-Stokes type problems (see, e.g., \cite{Guzman2016, konno2011}), but they are close to the expected rates ($\mathcal{O}(h^{k+1})$) for the Darcy equation (as reported in \cite{konno2011}). Furthermore, we observe that the total error is dominated by the pressure approximation and that the discrete velocities are indeed divergence-free.

\subsection{Viscous Rayleigh-Taylor Instability}

As a test case, we consider the physically interesting problem of the development of Rayleigh-Taylor instability in the viscous regime. This problem has been studied in previous works such as \cite{Freignaud2001,Calgaro2008}, which build upon the work by Tryggvason \cite{Tryggvason1988}. We consider a domain $\Omega = ]-l/2,l/2[\times]-2l,2l[$ filled with two layers of fluid with varying density, initially at rest and subject to gravity. Note that, to allow for comparison with previous studies on the Navier-Stokes setting, we only consider differences in the fluids' density, and no other property. Thus, the yield stress is assumed to be the same for both fluids. As proposed in \cite{Tryggvason1988}, the interface at time $t = 0$, is given as follows:
\begin{align*}
	\rho_0(x,y) = \frac{\rho_m+\rho_M}{2} + \frac{\rho_m-\rho_M}{2} \tanh \left(\frac{y-\omega \cos(2\pi x/l)}{0.01l}\right),
\end{align*}
where $\rho_M>\rho_m>0$ and $\omega > 0$ is the amplitude of the initial perturbation.

Set in $l$ the representative column length; we define dimensionless variables as
\begin{align*}
	\tilde{\rho} = \frac{\rho}{\rho_m}, \quad \tilde{\mb{x}} = \frac{\mb{x}}{l}, \quad \tilde{t} = \frac{t}{l^{1/2} g^{-1/2}},\quad \tilde{\mb{u}} = \frac{\mb{u}}{l^{1/2}g^{1/2}},	
\end{align*}
and we also define the following dimensionless numbers: The density ratio is measured by the Atwood number,
\begin{align*}
	\mathrm{At} = \frac{\rho_M-\rho_m}{\rho_M+\rho_m},
\end{align*}
and the the Reynolds number is defined as
\begin{align*}
	\mathrm{Re} = \frac{\rho_m l^{3/2} g^{1/2}}{\eta},
\end{align*}
where $\eta > 0$ is the dynamic viscosity of the fluid and $g$ is the gravitational acceleration (when $\tau_s >0$, the maximum Reynolds number is reported). We take $\mb{f} = \rho \mb{g}$, with $\mb{g} = (0,-g)$. Furthermore, when presenting our numerical results for this example, we will use the time scale of Tryggvason (we have $t_{\mathrm{Try}} = \tilde{t} \sqrt{\mathrm{At}}$).

Making abuse of notation, in what follows, we will write it simply $\rho$, $\mb{x}$, $t$ and $\mb{u}$, instead of $\tilde{\rho}, \tilde{\mb{x}}, \tilde{t}$ and $\tilde{\mb{u}}$, respectively, when no confusion can arise.

We compute the solution on the domain $(-l/2, l/2) \times (-2l, 2l)$ with the following boundary conditions for the velocity field: $\mb{u} = \mb{0}$ on the horizontal boundaries, and $\mb{u} = (0,v)$ with $\nabla \mb{u}\cdot\mb{n} = 0$ on the vertical boundaries.

 \begin{figure}[!t]
	\begin{center}
		\includegraphics[height=0.6\textwidth]{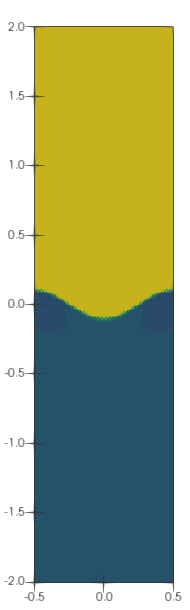}\includegraphics[height=0.6\textwidth]{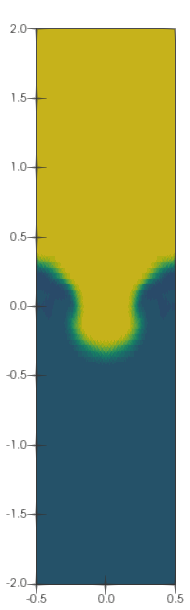}\includegraphics[height=0.6\textwidth]{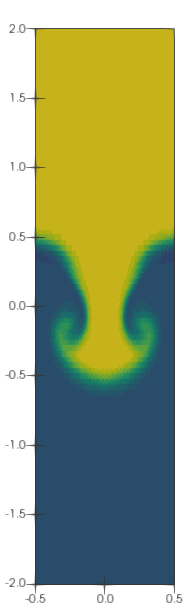}\includegraphics[height=0.6\textwidth]{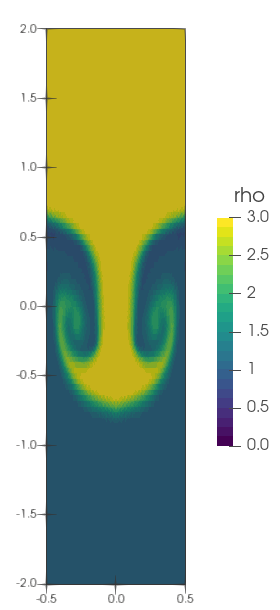}\\ \includegraphics[height=0.6\textwidth]{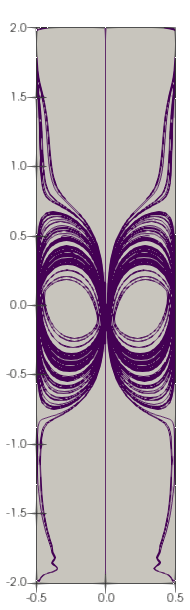}\includegraphics[height=0.6\textwidth]{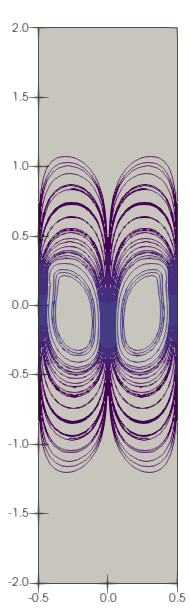}\includegraphics[height=0.6\textwidth]{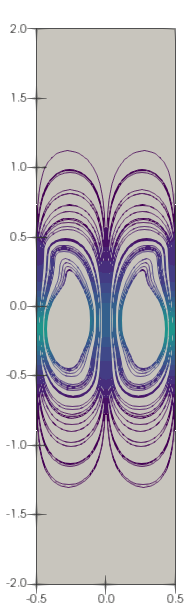}\includegraphics[height=0.6\textwidth]{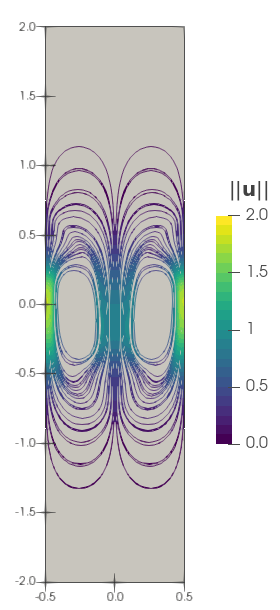}
	\end{center}
	\vspace{-3mm}
	\caption{Rayleigh-Taylor instability. Top: Evolution of the density interface. Bottom: Velocity vector field stream lines and inactive set (dark gray). Times: 0.1, 1.0, 1.5 and 2.0 (from left to right). Parameters: $\tau_s = 0.0$, $\mathrm{Re}=1000$, $\omega = 0.1$}\label{fig:ex1-1}  
\end{figure}

As a sanity check, we start by analyzing the case with a zero plasticity threshold, in order to compare our qualitative results with previous works on Navier-Stokes variable density incompressible flows. We set $\mathrm{At} = 0.5$ (i.e., $\rho_M = 3$, $\rho_m = 1$) and an initial condition of $\omega = 0.1$. We simulate a low Reynolds case with $\mathrm{Re} = 1000$ using a $100\times 100$ cell mesh. Comparing our qualitative results displayed for different time snapshots in Figure \ref{fig:ex1-1} with those presented in \cite[Figure 4]{Bell92}, \cite[Figure 11]{Calgaro2008}, and \cite[Figure 1]{Freignaud2001}, there is good agreement of the density profile in the early stages, with only some differences observed at large times. As noted in \cite{Freignaud2001}, these differences can be expected since an accurate and detailed prediction of the flow is usually difficult for $t \geq 1.5$.

 \begin{figure}[!t]
	\begin{center}
		\hspace{0.6cm}\includegraphics[height=0.6\textwidth]{RTEx1ut1_5.png}\includegraphics[height=0.6\textwidth]{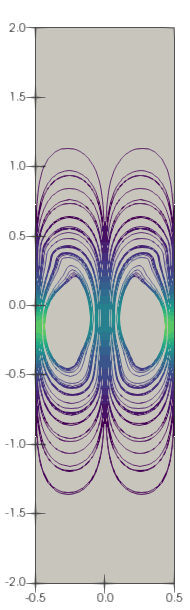}\includegraphics[height=0.6\textwidth]{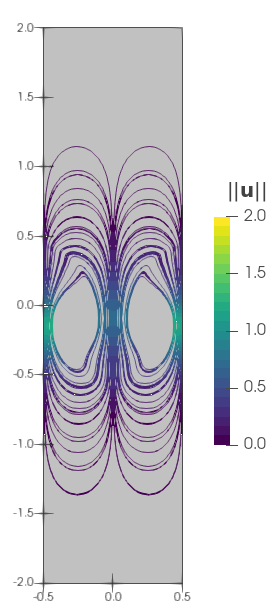}\\
		\hspace{7mm}\includegraphics[height=0.604\textwidth]{RTEx1rhot1_5.png}\includegraphics[height=0.6\textwidth]{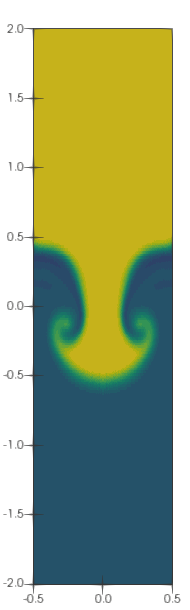}\includegraphics[height=0.6\textwidth]{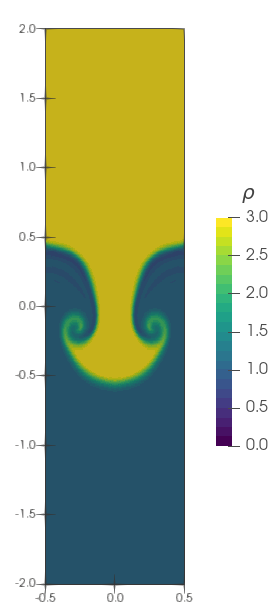}
	\end{center}
	\vspace{-3mm}
	\caption{Rayleigh-Taylor instability. Velocity stream lines and density interface at time t = 1.5 for 40 000, 160 000 and 640 000 cells. Parameters: $\tau_s=0.0$, $\mathrm{Re}=1000$, $\omega = 0.1$}\label{fig:ex1-1b}  
\end{figure}

In Figure \ref{fig:ex1-1b}, we also test the same setting with different mesh sizes: $100\times 100$ (40 000 cells), $200\times 200$ (160 000 cells), and $400\times 400$ (640 000 cells). The solutions largely agree between them, and the main features of the solution are still present in the coarse mesh. However, the details of counter-rotating swirls continue to improve with each refinement.

 \begin{figure}[!t]
	\begin{center}
	\includegraphics[height=0.5\textwidth]{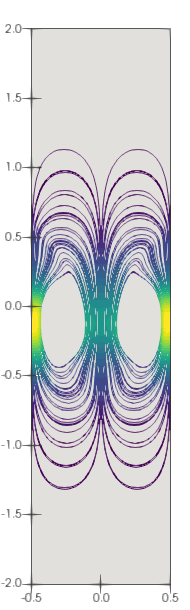}\includegraphics[height=0.5\textwidth]{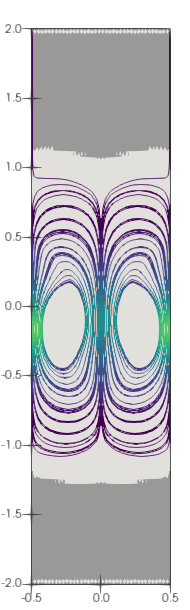}\includegraphics[height=0.5\textwidth]{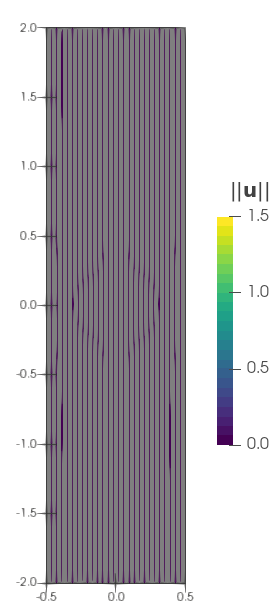}
	\includegraphics[height=0.5\textwidth]{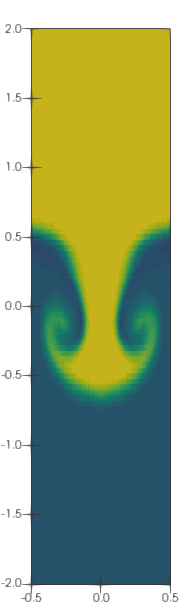}\includegraphics[height=0.5\textwidth]{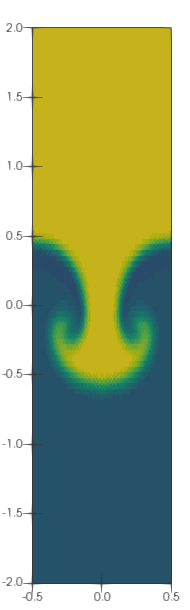}\includegraphics[height=0.5\textwidth]{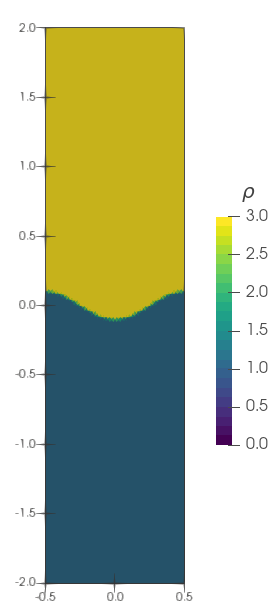}
	\end{center}
	\vspace{-3mm}
	\caption{Rayleigh-Taylor instability. Velocity stream lines and inactive set ($\mathcal{I}_\gamma$, dark gray); and density interface at time t = 1.75 for $\tau_s=0.0$, $\tau_s=\num{1e-1}$ and $\tau_s=\num{1.0}$. Parameters: $\mathrm{Re}=1000$, $\omega = 0.1$}\label{fig:ex1-2}  
\end{figure}

The influence of the yield stress $\tau_s$ is displayed in Figure \ref{fig:ex1-2}, where ascending counter-rotating vortices develop more slowly as the yield stress increases. As expected, the active set ($\mathcal{A}_{\gamma}$) also decreases. In fact, for the final test with a value of $\tau_s=1.0$, there is no vortex development at time $t=1.75$, with an almost imperceptible change in density interfaces.


 \begin{figure}[!t]
	\begin{center}
		\includegraphics[height=0.5\textwidth]{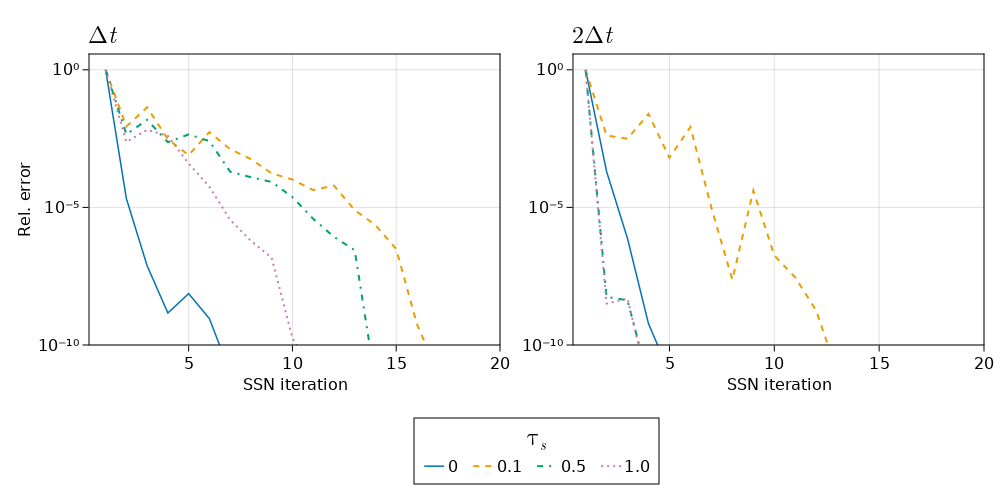}
	\end{center}
	\vspace{-3mm}
	\caption{Rayleigh-Taylor instability: Relative Error vs SSN iterations for the first two time iterations, with $\tau_s = 0.0, 0.1, 0.5$ and $1.0$. Parameters: $\omega = 0.1$, $\Delta t =0.1$}\label{fig:ex1-conv}  
\end{figure}

The relative error for each SSN iteration is displayed in Figure \ref{fig:ex1-conv} for the $\mathrm{At} = 0.5$ setting, $\tau_s = 0, 0.1, 0.5,$ and $1.0$ for the first two time iterations. As can be seen, convergence is slower when $\tau_s$ is close to $0.1$. In all cases, the second iteration converges faster. In general, fewer Newton iterations are required as the initial approximation improves across time iterations.


 \begin{figure}[!t]
	\begin{center}
\includegraphics[height=0.55\textwidth]{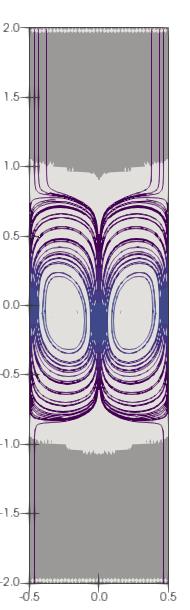}\includegraphics[height=0.55\textwidth]{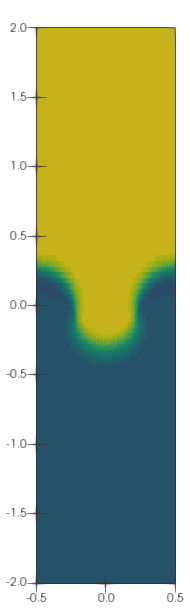}\hspace{5mm}\includegraphics[height=0.55\textwidth]{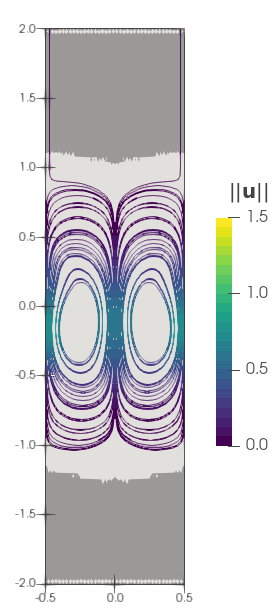}\includegraphics[height=0.55\textwidth]{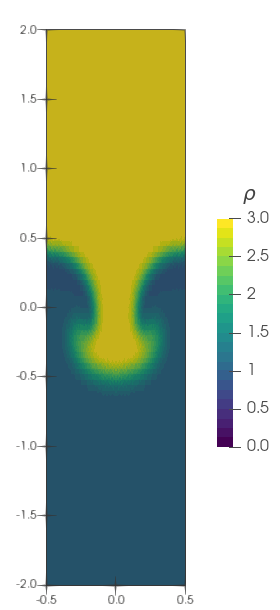} \\
	\includegraphics[height=0.55\textwidth]{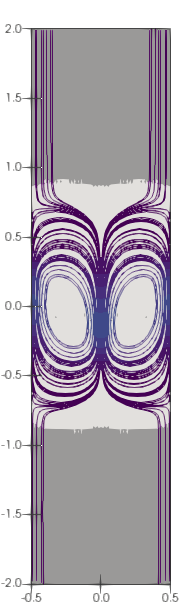}\includegraphics[height=0.55\textwidth]{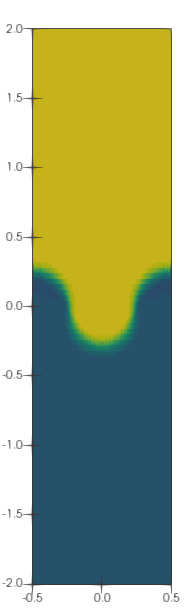}\hspace{5mm}\includegraphics[height=0.55\textwidth]{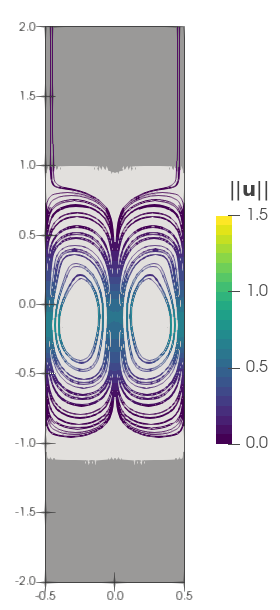}\includegraphics[height=0.55\textwidth]{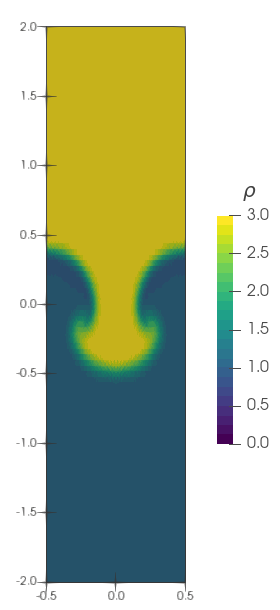}
	\end{center}
	\vspace{-3mm}
	\caption{Rayleigh-Taylor instability. Velocity stream lines and inactive set ($\mathcal{I}_\gamma$, dark gray); and density interface at times t = 1.0 and 1.5 for Re=1000 (top) and Re=3000 (bottom). Parameters: $\tau_s=0.1$, $\omega = 0.1$}\label{fig:ex1-3}  
\end{figure}

Now, we compare simulations employing two different Reynolds numbers: $\mathrm{Re} = 1000$ and $\mathrm{Re} = 3000$. The results displayed in Figure \ref{fig:ex1-3} show similar behavior to what was described for simulations of Newtonian fluids, in that viscosity plays no role in the velocity of the downward motion of the heavy fluid (see \cite{Calgaro2008,Freignaud2001}). We observe that as the Reynolds number increases, the velocity streamlines remain almost the same. However, note that rotating vortices are less developed in our simulation than what has been reported for the Newtonian counterpart, due to the influence of the plasticity threshold on the fluid dynamics. Additionally, the active zone (displayed in light gray) slightly decreases as the Reynolds number increases.

\subsection{Falling droplet}

 \begin{figure}[!t]
	\begin{center}
		\includegraphics[height=0.4\textwidth]{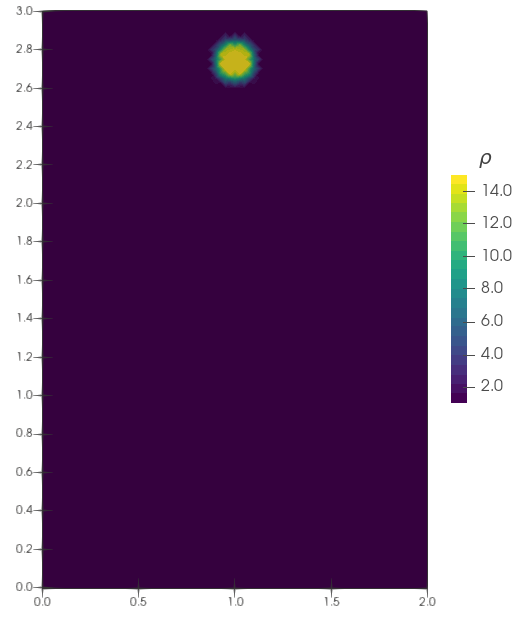}\includegraphics[height=0.4\textwidth]{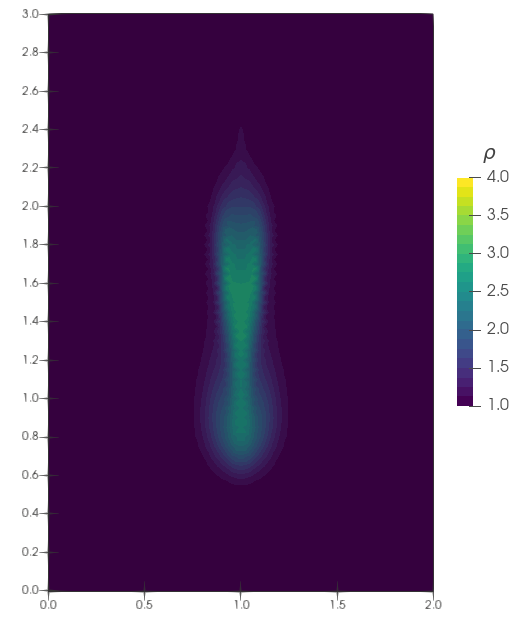}\includegraphics[height=0.4\textwidth]{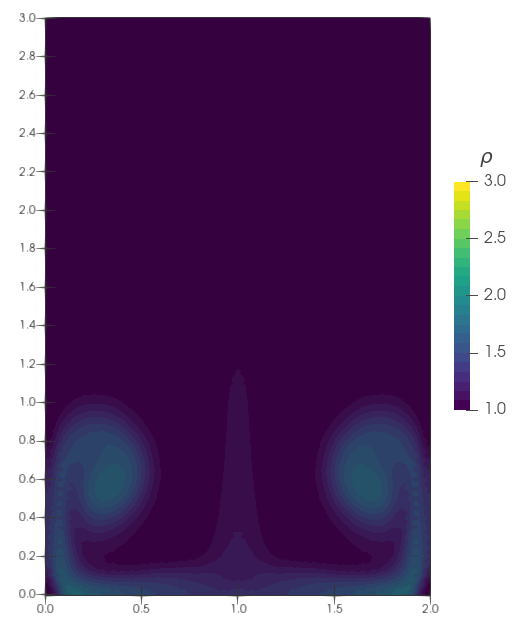} \\
		\includegraphics[height=0.4\textwidth]{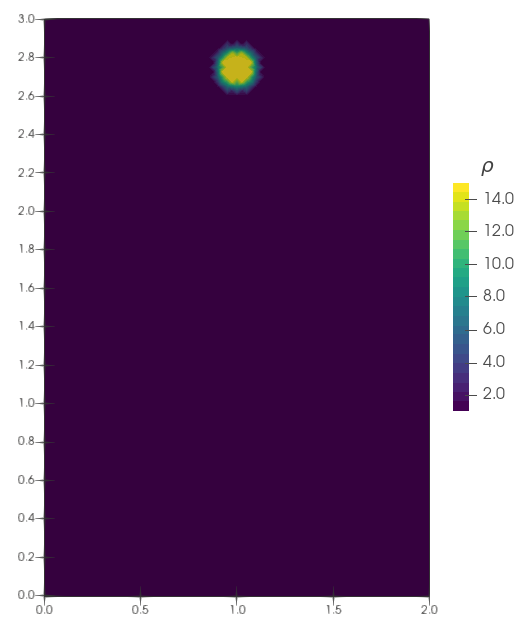}\includegraphics[height=0.4\textwidth]{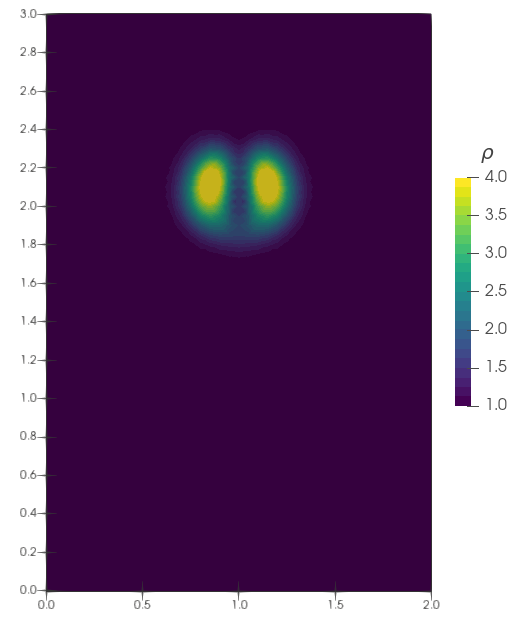}\includegraphics[height=0.4\textwidth]{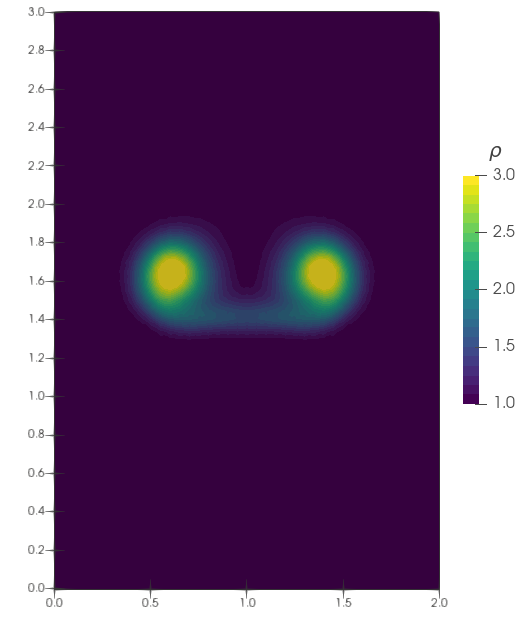}
	\end{center}
	\vspace{-3mm}
	\caption{Falling Droplet: Density interface at times t = 0.1, 2.0 and 4.0 for $\tau_s = 0.0 $ (top) and $\tau_s = 1.0$ (bottom). Parameters: $\mathrm{Re}=1000$}\label{fig:ex2-1}  
\end{figure}

Now we investigate a droplet falling through a light fluid. The domain is $\Omega = ]0,0[\times]l,1.5l[$, with $l=2$. At time $t=0$ the fluid is at rest with initial density given by
\begin{align*}
	\rho(x,y) = \begin{cases}
		15.0 &\text{if } 0\leq \sqrt{(x-1.0)^2+(y-2.75)^2}\leq 0.1 \\
		1.0  & \text{elsewhere}
	\end{cases}
\end{align*}

 \begin{figure}[!t]
	\begin{center}
		\includegraphics[height=0.42\textwidth]{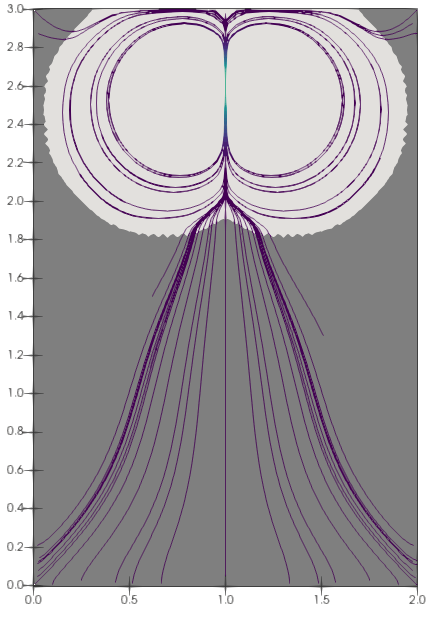}\includegraphics[height=0.42\textwidth]{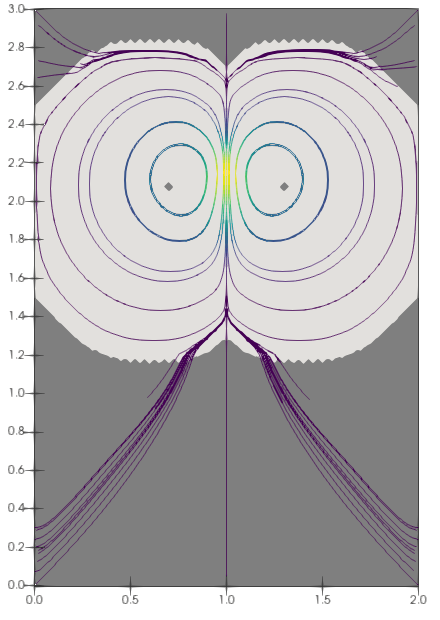}\includegraphics[height=0.42\textwidth]{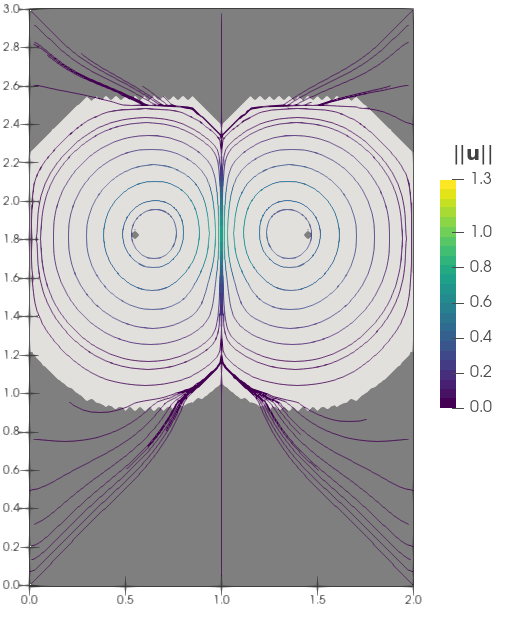}
	\end{center}
	\vspace{-3mm}
	\caption{Falling Droplet. Velocity vector field stream lines and inactive set ($\mathcal{I}_\gamma$, dark gray); at times t = 1.0, 2.0 and 3.0. Parameters: $\tau_s = 1.0 $, $\mathrm{Re}=1000$}\label{fig:ex2-2}  
\end{figure}

The equations are made dimensionless by using the same reference quantities as in the previous example. In our test, we use nonslip boundary conditions on all walls and a mesh of $40 \times 60$ cells. We set $\mathrm{Re} = 1000$ and test two cases: zero plasticity threshold ($\tau_s=0$) and $\tau_s = 1.0$. The qualitative results are displayed in Figures \ref{fig:ex2-1} and \ref{fig:ex2-2}. As we increase the value of $\tau_s$, the recirculation patterns around the downward droplet's path appear earlier, causing the droplet to split in half. While in the zero threshold case, the split occurs once the droplet reaches the domain bottom, in the $\tau_s=1.0$ case, the split starts as early as the $t=2.0$ snapshot. A closer look at the $\tau_s = 1.0$ case shows how the active zone moves following the droplet.

 \begin{figure}[!t]
	\begin{center}
		\includegraphics[height=0.40\textwidth]{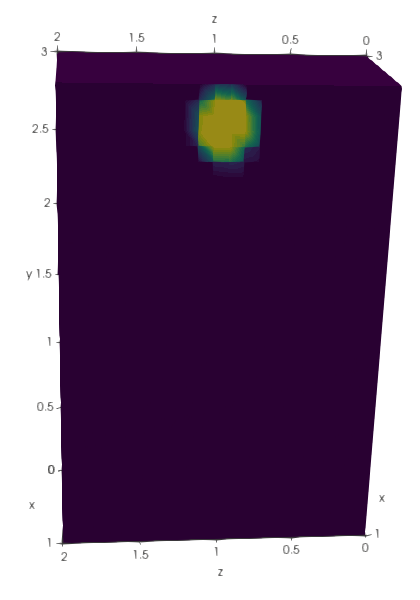}\includegraphics[height=0.40\textwidth]{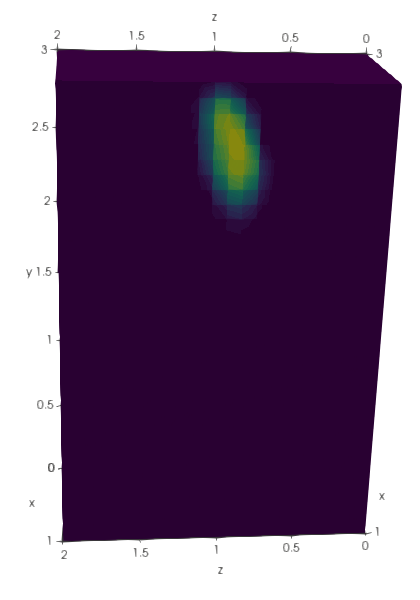}\includegraphics[height=0.40\textwidth]{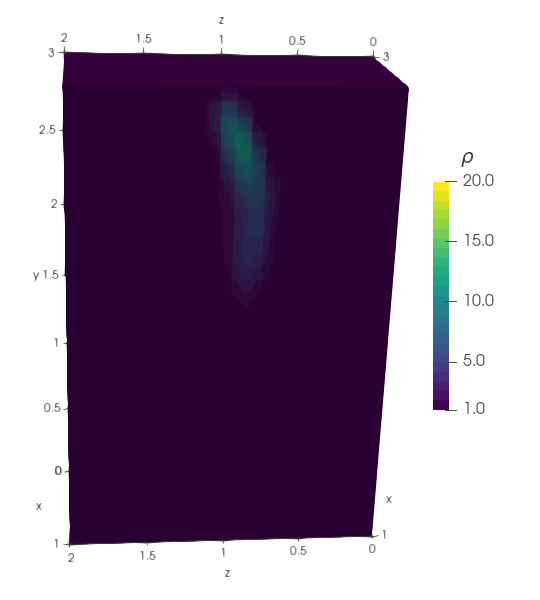}\\
		\includegraphics[height=0.37\textwidth]{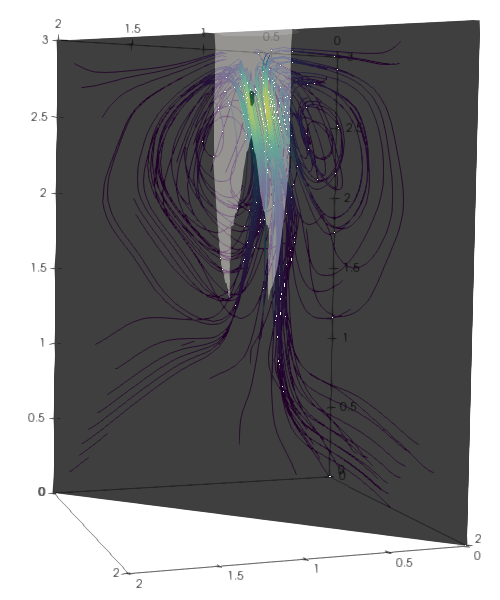}\includegraphics[height=0.37\textwidth]{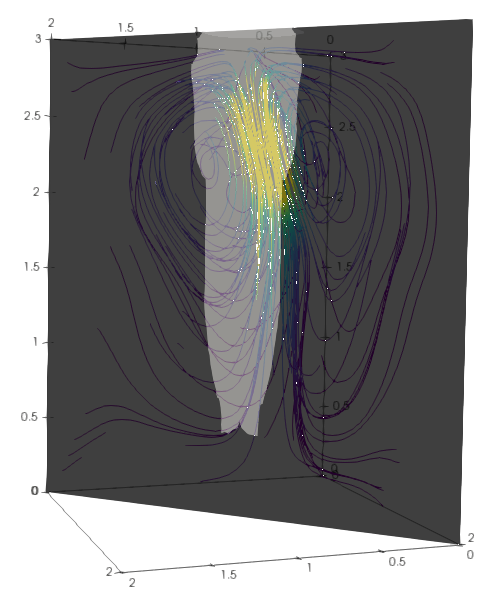}\includegraphics[height=0.37\textwidth]{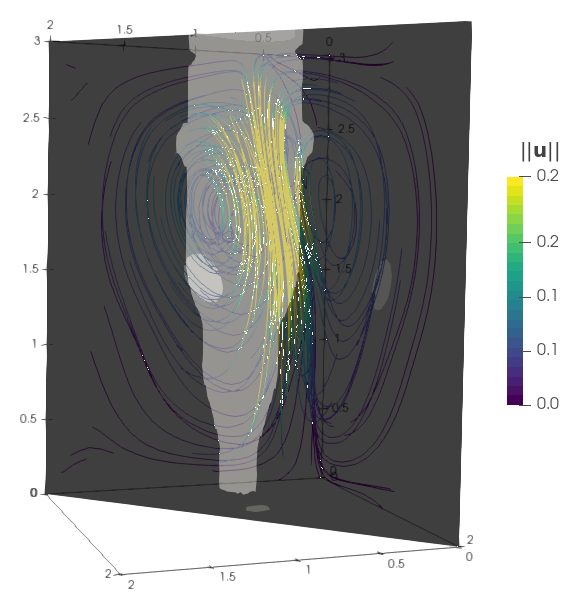}
	\end{center}
	\vspace{-3mm}
	\caption{3D Falling Droplet. Top: Density interface at times t=0.1, 1.0 and 2.0. Bottom: Velocity vector field stream lines and inactive set ($\mathcal{I}_\gamma$, dark gray); at times t = 0.1, 1.0 and 2.0. Parameters: $\tau_s = 0.5 $, $\mathrm{Re}=1000$}\label{fig:ex2-3}  
\end{figure}

Finally, a direct extension of this experiment was performed using a three-dimensional domain $\Omega = ]0,0,0[\times]l,1.5l,l[$ with $l=2$, and a tetrahedral grid with 95832 cells. We set 
\begin{align*}
	\rho(x,y) = \begin{cases}
		20.0 &\text{if } 0\leq \sqrt{(x-1.0)^2+(y-2.7)^2+(z-1.0)^2}\leq 0.2 \\
		1.0  & \text{elsewhere},
	\end{cases}
\end{align*}
$\mathrm{Re} = 1000$ and plasticity threshold ($\tau_s=0.5$). As shown in Figure \ref{fig:ex2-3}, it takes more time for the droplet to cover the same vertical distance in the three-dimensional domain. It is also noticeable from the bottom image in Figure \ref{fig:ex2-3} how the active zone (in light grey) is smaller in this three-dimensional case but grows as the droplet gains speed while following the path towards the bottom of the cell.

 \begin{figure}[!t]
	\begin{center}
		\includegraphics[height=0.4\textwidth]{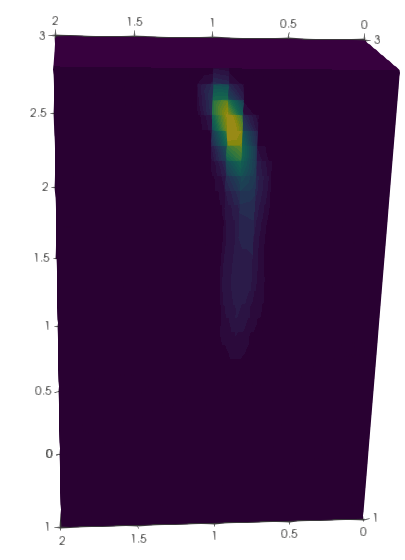}\includegraphics[height=0.4\textwidth]{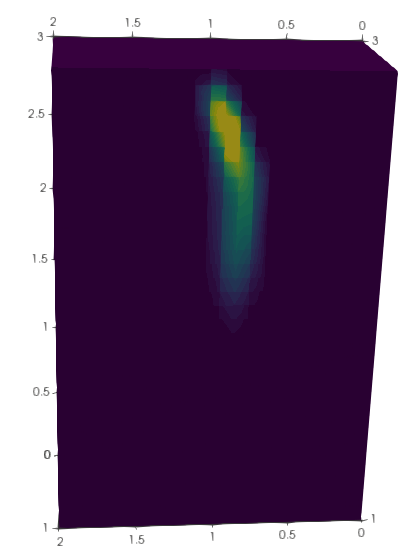}\includegraphics[height=0.4\textwidth]{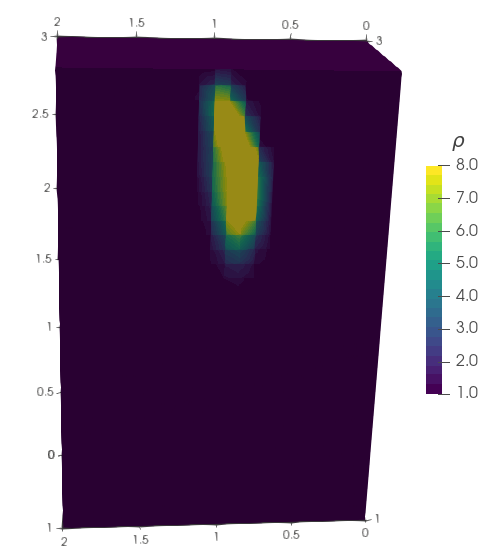}
	\end{center}
	\vspace{-3mm}
	\caption{3D Falling Droplet: Density interface at time t=2.5, for $\tau_s=$0, 2.0 and 5.0 (from left to right). Parameters: $\Delta_t =0.25$, Re=$1000$.}\label{fig:ex2-tau}  
\end{figure}

 \begin{figure}[!t]
	\begin{center}
		\includegraphics[height=0.5\textwidth]{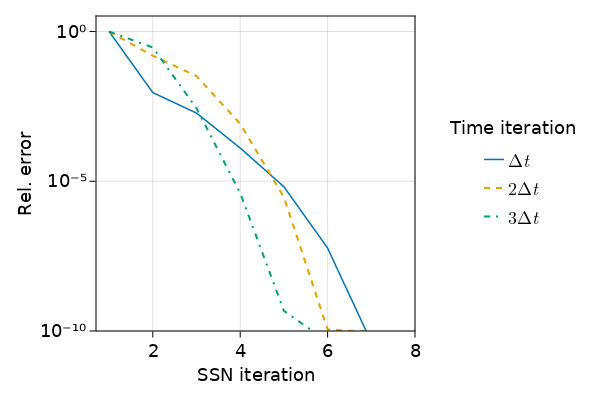}
	\end{center}
	\vspace{-3mm}
	\caption{3D Falling Droplet: Relative Error vs SSN iterations for the first three time iterations. Parameters: $\tau_s=0.5$, $\Delta_t =0.2$, Re=$1000$.}\label{fig:ex2-conv}  
\end{figure}

Relative error for each SSN iteration is displayed in Figure \ref{fig:ex2-conv}. As in the previous tests, convergence is slower for the first-time iterations (including the backward Euler step), and then it becomes faster as the initial approximation for the SSN iterations improves.

\section{Conclusions} \label{sec:conclusions}

In this work, we present a second-order divergence-conforming dG method for the case of Huber-regularized Bingham flows with variable density. We introduce the Huber regularization and show its qualitative advantages when used in this kind of model. The numerical scheme is based on a discontinuous Galerkin formulation for the mass density equation, stabilized with an upwind term, coupled with a divergence-conforming approximation of a Huber-type regularization of the Bingham flow equation, and uses a BDF2 scheme for the time integration of the mass conservation and momentum equations. In each time step, we solve the resulting system of the space discretization using a Semismooth Newton Iteration, which is suitable due to the Huber regularization step. We prove the stability of the continuous problem and the stability of the full-discrete scheme. To verify the correctness of the method, we compare our qualitative results with test cases previously considered in the literature. For instance, when simulating the evolution of the Rayleigh-Taylor instability of the interface between fluids of different densities, the results of the method with a low yield stress threshold agree with the variable density Navier-Stokes computations in \cite{Calgaro2008, Freignaud2001}, especially in the early stages of vortex formation and roll-up. Furthermore, simulations with different mesh refinements show that we can still capture the main features of the density front even with rather coarse meshes, while finer details improve with mesh refinement.
	The spatial convergence analysis conducted for the stationary Bingham test problem indicates that the $H(\mathrm{div})$-conforming method is accurate enough to consider the computed solutions for the homogeneous Bingham case as a reliable base for model extensions, such as the variable density case studied here. We leave open for future studies the more complex case where the variation in density also affects the rheological model, for instance, through a change in the yield stress. Nevertheless, we consider our results to support the general conclusion that the scheme is worthy of attention for the numerical approximation of complex fluids with yield.

\bigskip
\noindent\textbf{Acknowledgement.} We acknowledge the partial support by Escuela Polit\'ecnica Nacional del Ecuador, under the projects PIS 18-03 and PIGR 19-02. We are also grateful with the anonymous reviewers whose comments helped us to improve the article. This research was carried out by using the research computing facilities offered by the Scientific Computing Laboratory of the Research Center on Mathematical Modeling: MODEMAT, Escuela Polit\'ecnica Nacional - Quito.

\end{document}